 \newtheorem{thm}{Theorem}[section]
 \newtheorem{cor}[thm]{Corollary}
 \newtheorem{lem}[thm]{Lemma}
 \newtheorem{prop}[thm]{Proposition}
 \theoremstyle{definition}
 \newtheorem{defn}[thm]{Definition}
 \theoremstyle{remark}
 \newtheorem{rem}[thm]{Remark}
 \newtheorem*{ex}{Example}
 \numberwithin{equation}{section}
\newcommand{\Hq}{\mathbb H}
\newcommand{\Sq}{\mathbb S}
\newcommand{\N}{\mathbb{N}}
\newcommand{\R}{\mathbb{R}}      
\newcommand{\C}{\mathbb{C}}
\title[On the global operator and Fueter mapping theorem for slice polyanalytic functions]{On the global operator and Fueter mapping theorem for slice polyanalytic functions}
\author[D. Alpay]{Daniel Alpay}
\address{(DA) Schmid College of Science and Technology, Chapman University, Orange 92866, CA, US}
\email{alpay@chapman.edu}
\author[K. Diki]{Kamal Diki}
\address{(KD) Politecnico di
Milano\\Dipartimento di Matematica\\Via E. Bonardi, 9\\20133 Milano,
Italy}
\email{kamal.diki@polimi.it}
\author[I. Sabadini]{Irene Sabadini}
\address{(IS) Politecnico di
Milano\\Dipartimento di Matematica\\Via E. Bonardi, 9\\20133 Milano\\Italy}
\email{irene.sabadini@polimi.it}
\begin{document}
\maketitle
\begin{abstract}
In this paper, we prove that slice polyanalytic functions on quaternions can be considered as solutions of a power of some special global operator with nonconstant coefficients as it happens in the case of slice hyperholomorphic functions. We investigate also an extension version of the Fueter mapping theorem in this polyanalytic setting. In particular, we show that under axially symmetric conditions it is always possible to construct Fueter regular and poly-Fueter regular functions through slice polyanalytic ones using what we call the poly-Fueter mappings. We study also some integral representations of these results on the quaternionic unit ball.
 \end{abstract}

\noindent AMS Classification: Primary 30G35, 32A25, 30E20.

\noindent {\em Key words}: Quaternions, Slice poly-analytic functions, poly-Fueter regular functions, Poly-Fueter mapping.

\section{Introduction}
This paper proposes a bridge between two theories: the one of slice polyanalytic functions and the one of poly-Fueter regular functions. To understand the framework, we recall that in classical complex analysis, $n$-analytic or polyanalytic functions are null-solutions of the $n$-power of the Cauchy-Riemann operator.
In the quaternionic setting or, more in general, in the Clifford algebra setting, one can extend this notion by considering functions in the kernel of a generalized Cauchy-Riemann operator (thus obtaining the so-called regular or monogenic functions, see \cite{CSSS2004,GHS}) or of its $n$-power (thus obtaining poly-regular functions or poly-monogenic functions, see \cite{PSS2016, KahlerKuQian2017}).

 This was the first approach to extend  holomorphic functions, and then polyanalytic functions, to a higher dimensional setting. Although this theory is very important and well developed, it does not include the class of elementary functions and power series. In 2006, a new approach to quaternionic analyticity was proposed in the literature \cite{GS2006, GS2007}, namely the so-called slice hyperholomorphic (or slice regular) function theory and it is now widely studied, see the books \cite{ACS_book,CSS,CSS_book, GentiliSS}. It founds some interesting applications in different areas of mathematics and physics, see \cite{FJBOOK,CGKBOOK}. It is interesting to note that the class of slice hyperholomorphic functions is related with the class of functions considered by Fueter to construct regular functions and thus there is a bridge between them, specifically the so-called Fueter mapping, in fact by applying the Laplacian to a slice hyperhomolorphic function one obtains a regular function, i.e. a function in the kernel of the Cauchy-Fueter operator, see for example  \cite{ColomboSabadiniSommen2010}. Also the theory of polyanalytic functions can be extended to the slice setting by considering a suitable definition, as we did in \cite{ADS2019}. Thus it is a natural question to ask whether there is an analog of the Fueter map in this more general setting. The answer is positive and it is one of the main results of this paper: we show that by applying the Laplacian composed with the $(n-1)$ power of the global operator $V=2\overline{\vartheta}$ (where $\overline{\vartheta}$ is the operator introduced in \cite{P2019}) to any slice polyanalytic function of order $n$ we obtain a Cauchy-Fueter regular function. A second approach to extend the Fueter mapping to the polyanalytic setting consists to apply the standard Fueter mapping on each component associated to the poly-decomposition. This constrution allows to generate poly-Fueter regular functions starting from slice polyanalytic ones of the same order.

To put our work in perspective, we recall that classical polyanalytic functions are important not only from the theoretical point of view, see the classical book \cite{Balk1991}, but also in the theory of signals since they allow to encode $n$ independent analytic functions into a single polyanalytic one using a special decomposition. This idea is similar to the problem of multiplexing signals. This is related to the construction of the polyanalytic Segal-Bargmann transform mapping $L^2(\R)$ onto the poly-Fock space, see \cite{AF2014}. In quantum physics these functions are relevant for the study of the Landau levels associated to Schr\"odinger operator, see \cite{AF2014,AIM1997}. Polyanalytic functions were used also in \cite{A2010} to study sampling and interpolation problems on Fock spaces using time frequency analysis techniques such as short-time Fourier transform (STFT) or Gabor transforms. This allows to extend Bargmann theory to the polyanalytic setting using Gabor analysis. The theory of signals is widely studied also with hypercomplex methods and for a list of references the reader may consult \cite{ck} and the references therein.

As we said, Fueter regular and slice hyperholomorphic functions are related  by the famous Fueter mapping theorem. This result has some important consequences and allows to define the $\mathcal{F}$-functional calculus for quaternionic operators with commuting components. Recently, new several results  for polyanalytic functions were proven in the slice hyperholomorphic context over the quaternions, see \cite{ADS2019}, and the counterparts of the Bergman and Fock spaces were also considered. In this paper we continue the investigations in this direction. In particular, we prove a new version of the well-known Fueter mapping theorem that will relate slice and Cauchy-Fueter polyanalytic functions on quaternions and present an integral form of this result.

The paper has the following structure: in Section 2 we set up basic notations and revise some preliminary results. Section 3 contains some results on the powers of the global operator $V$ and the main statements and proofs of the poly-Fueter mapping theorems.  In Section 4 we study an integral representation of these results based on the poly-Cauchy formula on the quaternionic unit ball. In Section 5, we rewrite our results in the slice polymonogenic case.

\section{ Preliminary results}
We revise different notions and results related to  Cauchy-Fueter and slice hyperholomorphic functions and  also the polyanalytic setting on quaternions. Different versions of the Fueter mapping theorem are also recalled.
The non-commutative field of quaternions is defined to be
$$\Hq=\lbrace{q=x_0+x_1i+x_2j+x_3k : \ x_0,x_1,x_2,x_3\in\R}\rbrace$$ where the imaginary units satisfy the multiplication rules $$i^2=j^2=k^2=-1\quad \text{and}\quad ij=-ji=k, jk=-kj=i, ki=-ik=j.$$
On $\Hq$ the conjugate and the modulus of $q$ are defined respectively by
$$\overline{q}=x_0-\vec{q}\,,  \quad \vec{q}\,=x_1i+x_2j+x_3k$$
and $$\vert{q}\vert=\sqrt{q\overline{q}}=\sqrt{x_0^2+x_1^2+x_2^2+x_3^2}.$$

Sometimes, we use also the notations $e_0=1, e_1=i, e_2=j$ and $e_3=k$ for the standard imaginary units.
We note that the quaternionic conjugation satisfy the property $\overline{ pq }= \overline{q}\, \overline{p}$ for any $p,q\in \Hq$.
Moreover, the unit sphere $$\lbrace{q=x_1i+x_2j+x_3k : \text{ } x_1^2+x_2^2+x_3^2=1}\rbrace$$ coincides with the set of all  imaginary units given by $$\mathbb{S}=\lbrace{q\in{\Hq} : q^2=-1}\rbrace.$$
  Any quaternion $q\in \Hq\setminus \R$ can be written in a unique way as $q=x+I y$ for some real numbers $x$ and $y>0$, and imaginary unit $I\in \mathbb{S}$. For every given $I\in{\mathbb{S}}$, we define $\C_I = \mathbb{R}+\mathbb{R}I.$
It is isomorphic to the complex plane $\C$ so that it can be considered as a complex plane in $\Hq$ passing through $0$, $1$ and $I$. Their union is the whole space of quaternions $$\Hq=\underset{I\in{\mathbb{S}}}{\cup}\C_I =\underset{I\in{\mathbb{S}}}{\cup}(\mathbb{R}+\mathbb{R}I).$$
Let $\mathbb{B}$ denotes the quaternionic unit ball and $\mathbb{B}_I$ its intersection with the complex plane $\C_I$ for a given $I\in\mathbb{S}$. Then, we recall
\begin{defn} Let $U\subset \Hq$ be an open set and let $f:U\longrightarrow \Hq$ be a function of class $\mathcal{C}^1$. We say that $f$ is (left) Fueter regular or regular for short on $U$ if $$\mathcal{D} f(q):=\displaystyle\left(\frac{\partial}{\partial x_0}+i\frac{\partial}{\partial x_1}+j\frac{\partial}{\partial x_2}+k\frac{\partial}{\partial x_3}\right)f(q)=0, \forall q\in U.$$
\\
The right quaternionic vector space of Fueter regular functions will be denoted by $\mathcal{FR}(U)$.
\end{defn}

\begin{defn}
Let $f: \Omega \longrightarrow \Hq$ be a  $\mathcal{C}^1$ function on a given domain $\Omega\subset \Hq$. Then, $f$ is said to be (left) slice hyperholomorphic function if, for every $I\in \Sq$, the restriction $f_I$ to $\C_{I}=\R+I\R$, with variable $q=x+Iy$, is holomorphic on $\Omega_I := \Omega \cap \C_I$, that is it has continuous partial derivatives with respect to $x$ and $y$ and the function
$\overline{\partial_I} f : \Omega_I \longrightarrow \Hq$ defined by
$$
\overline{\partial_I} f(x+Iy):=
\dfrac{1}{2}\left(\frac{\partial }{\partial x}+I\frac{\partial }{\partial y}\right)f_I(x+yI)
$$
vanishes identically on $\Omega_I$. 
\end{defn}
Later we shall introduce another definition of slice regularity that is a special case of the definition in the poly analytic case with $n=1$.
The right quaternion vector space of slice hyperholomorphic functions is endowed with the natural topology of uniform convergence on compact sets. The characterization of such functions on a ball centered at the origin is given by

\begin{thm}[Series expansion \cite{GS2007}]
An $\Hq$-valued function $f$ is slice hyperholomorphic on $B(0,R)$ if and only if it has a series expansion of the form:
$$f(q)=\sum_{n=0}^{+\infty} q^na_n$$
converging on $B(0,R)=\{q\in\Hq;\mid q\mid<R\}$.
\end{thm}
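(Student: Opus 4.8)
The plan is to prove the two implications separately, treating sufficiency of the series as the routine direction and necessity (that every slice hyperholomorphic function expands in such a series) as the substantial one.

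For the ``if'' direction I would first verify that each monomial $q\mapsto q^n a_n$ is slice hyperholomorphic. Fixing $I\in\Sq$ and writing $q=x+Iy$, the quantities $\partial_x(x+Iy)^n=n(x+Iy)^{n-1}$ and $I\partial_y(x+Iy)^n=n(x+Iy)^{n-1}I^2=-n(x+Iy)^{n-1}$ (using that $I$ commutes with $x+Iy$) cancel, so that $\overline{\partial_I}\big(q^n a_n\big)=\big(\overline{\partial_I}q^n\big)a_n=0$ on every $\Omega_I$, the constant $a_n$ factoring out on the right. Since a quaternionic power series converges absolutely and uniformly on compact subsets of its ball of convergence, its partial sums converge locally uniformly on each slice disk; as the $\C_I$-components of the restriction are then locally uniform limits of holomorphic functions, the limit is holomorphic on each $\Omega_I$, and the sum is slice hyperholomorphic on $B(0,R)$.

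For the ``only if'' direction I would fix $I\in\Sq$ and study the restriction $f_I$ to $\C_I$, which by hypothesis satisfies $\overline{\partial_I}f_I=0$ on the disk $B_I(0,R)$. Viewing $\Hq$ as a two-dimensional right $\C_I$-vector space and choosing $J\in\Sq$ with $J\perp I$, the Splitting Lemma gives $f_I=F_1+F_2J$ with $F_1,F_2:B_I(0,R)\to\C_I$ each a classical holomorphic function of the complex variable $z=x+Iy$. Ordinary Cauchy theory then yields convergent expansions $F_\ell(z)=\sum_{n\ge0}z^n b_{\ell,n}$ with $b_{\ell,n}\in\C_I$, and recombining produces $f_I(z)=\sum_{n\ge0}z^n a_n$ with $a_n=b_{1,n}+b_{2,n}J\in\Hq$. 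Because $\{1,I,J,IJ\}$ is an orthonormal basis one has $|a_n|^2=|b_{1,n}|^2+|b_{2,n}|^2$, and since each complex series converges on the disk of radius $R$ we obtain $\limsup_n|a_n|^{1/n}\le 1/R$, so the quaternionic series $\sum q^n a_n$ converges on all of $B(0,R)$.

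The hard part will be showing that the coefficients $a_n$ just produced are independent of the slice $\C_I$, so that the single series reproduces $f$ on the whole ball and not merely on one slice. I would close this by the ``if'' direction already established: the function $g(q):=\sum q^n a_n$ is slice hyperholomorphic on $B(0,R)$ and, by construction, coincides with $f$ on $\C_I$; invoking the Identity/Representation Formula for slice hyperholomorphic functions—two such functions agreeing on a single slice agree everywhere—forces $g=f$ on $B(0,R)$. Equivalently, one identifies $a_n=\tfrac{1}{n!}\,\partial_x^n f_I(0)$, a slice derivative evaluated at the real point $0$ that lies in every slice, whence its intrinsic independence of $I$. Either route completes the argument.
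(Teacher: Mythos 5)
The paper does not prove this statement; it is recalled verbatim from Gentili--Struppa \cite{GS2007} as a preliminary, so there is no in-paper proof to compare against. Your argument is correct and is essentially the standard one from the cited source: the ``if'' direction via the slice-wise Cauchy--Riemann computation on monomials plus locally uniform convergence, and the ``only if'' direction via the Splitting Lemma and classical Taylor expansion on one slice. The one genuinely delicate point --- that the coefficients do not depend on the chosen $I\in\Sq$ --- is handled correctly by your second route, since $a_n=\tfrac{1}{n!}\partial_x^n f(0)$ is computed at the real center and so depends only on $f|_{\R}$; this is preferable to the Identity Principle route, which in some developments is itself derived from the series expansion and would risk circularity.
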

Another approach to define slice hyperholomorphic functions is to consider them as solutions of a special global operator with non constant coefficients that was introduced and studied in \cite{CGCS2013, CS2014, P2019}. This leads to the following definition
\begin{defn}
Let $\Omega$ be an open set in $\Hq$ and $f:\Omega\longrightarrow \Hq$ a function of class  $\mathcal{C}^1$. We define the global operator $G(f)$ by
$$\displaystyle G(f)(q):=|\vec{q}\,|^2\partial_{x_0}f(q)+\vec{q}\,\sum_{l=1}^3x_l\partial_{x_l}f(q),$$
for any $q=x_0+\vec{q}\,\in\Omega$.
\end{defn}
It was proved in \cite{CGCS2013} that any slice hyperholomorphic function belongs to $\ker(G)$ on axially symmetric slice domains. We recall briefly this notion:

\begin{defn}
A domain $\Omega\subset \Hq$ is said to be a slice domain (or just $s$-domain) if  $\Omega\cap{\mathbb{R}}$ is nonempty and for all $I\in{\mathbb{S}}$, the set $\Omega_I:=\Omega\cap{\C_I}$ is a domain of the complex plane $\C_I$.
If moreover, for every $q=x+Iy\in{\Omega}$, the whole sphere $x+y\mathbb{S}:=\lbrace{x+Jy; \, J\in{\mathbb{S}}}\rbrace$
is contained in $\Omega$, we say that  $\Omega$ is an axially symmetric slice domain.
\end{defn}
 Other interesting properties of the global operator $G$ were studied in \cite{CGCS2012}. We recall some of them that will be helpful for our purposes:
\begin{prop}\label{GPRO}
Let $\Omega$ be an open set in $\Hq$ and $f,g:\Omega\longrightarrow \Hq$ two functions of class $\mathcal{C}^1$. Then, for $q=x_0+\vec{q}\,\in\Omega$ we have
\begin{enumerate}
\item $\displaystyle G(fg)=G(f)g+fG(g)+(\vec{q}\,f-f\vec{q}\,)\sum_{l=1}^3x_l\partial_{x_l}g$. \newline \text{In particular, it holds:}

\item $G(f\lambda+g)=G(f)\lambda+G(g),  \forall \lambda \in \Hq$.
\item $G(x_0f)=|\vec{q}\,|^2f+x_0G(f)$ and $G(\vec{q}\,f)=-|\vec{q}\,|^2f+\vec{q}\,G(f).$
\item $G(q^kf)=q^kG(f), \forall k\in\N.$

\end{enumerate}
\end{prop}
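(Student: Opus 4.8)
The plan is to treat item (1) as the master identity from which the remaining three assertions follow by specialization and induction. Since $G$ is built from the ordinary partial derivatives $\partial_{x_0},\dots,\partial_{x_3}$ (taken componentwise on the $\Hq$-valued function) followed by left multiplication by the real scalar $|\vec{q}\,|^2$ or by the quaternion $\vec{q}$, the only genuine subtlety throughout is the non-commutativity of $\Hq$: one must never move $\vec{q}$ past a quaternion-valued factor. First I would establish (1); then (2) and (3) drop out by choosing $f$ or $g$ to be elementary functions, and (4) follows by a short induction.

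To prove (1), I would expand $G(fg)$ by inserting the classical Leibniz rule $\partial_{x_l}(fg)=(\partial_{x_l}f)g+f(\partial_{x_l}g)$ for $l=0,\dots,3$, which holds componentwise and needs no hypothesis beyond $f,g\in\mathcal{C}^1$. Collecting the terms in which the derivative lands on $f$ reproduces exactly $G(f)g$, since the left factor $|\vec{q}\,|^2$ or $\vec{q}$ stays in place and $g$ remains on the right. The remaining terms are
$$|\vec{q}\,|^2 f(\partial_{x_0}g)+\vec{q}\sum_{l=1}^3 x_l f(\partial_{x_l}g).$$
The first of these is already the $\partial_{x_0}$-part of $fG(g)$, because $|\vec{q}\,|^2$ is real and commutes with $f$. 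In the second term, however, $\vec{q}$ sits to the \emph{left} of $f$, whereas $fG(g)$ demands the factor $f\vec{q}$; adding and subtracting $f\vec{q}\sum_{l}x_l\partial_{x_l}g$ yields $fG(g)$ together with the correction $(\vec{q}f-f\vec{q})\sum_{l=1}^3 x_l\partial_{x_l}g$, which is precisely the extra term in (1). This is the one place where care is genuinely needed, and it is the conceptual heart of the proposition.

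For (2), I would take $g$ to be the constant $\lambda$: then $G(\lambda)=0$, all derivatives of $\lambda$ vanish so the commutator term disappears, and $G(f\lambda)=G(f)\lambda$; additivity of $G$ then gives the full statement. For (3), I would apply (1) with first factor $x_0$ and $\vec{q}$ respectively. In both cases the commutator term vanishes, since $x_0$ is real and since $\vec{q}\,\vec{q}-\vec{q}\,\vec{q}=0$. It then remains only to evaluate $G$ on these two functions: a direct computation gives $G(x_0)=|\vec{q}\,|^2$ (only the $\partial_{x_0}$ contribution survives) and $G(\vec{q})=\vec{q}\sum_{l=1}^3 x_l\partial_{x_l}\vec{q}=\vec{q}^{\,2}=-|\vec{q}\,|^2$, using that $\vec{q}$ is purely imaginary. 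Substituting produces the two formulas in (3).

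Finally, (4) follows by induction on $k$. The base case $k=1$ comes from writing $q=x_0+\vec{q}$ and adding the two identities of (3): the $\pm|\vec{q}\,|^2 f$ terms cancel, leaving $G(qf)=(x_0+\vec{q})G(f)=qG(f)$, valid for every $\mathcal{C}^1$ function. For the inductive step one applies this case to the function $q^k f$, so that $G(q^{k+1}f)=q\,G(q^k f)=q\cdot q^k G(f)=q^{k+1}G(f)$. The only real obstacle anywhere in the argument is the careful bookkeeping of the non-commutative products in step (1); once that commutator term is correctly isolated, everything else is routine.
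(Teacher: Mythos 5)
Your proof is correct. Note that the paper itself does not prove this proposition: it merely recalls it from the reference \cite{CGCS2012}, so there is no internal proof to compare against. Your derivation is a sound, self-contained verification — the Leibniz expansion and the isolation of the commutator term $(\vec{q}\,f-f\vec{q}\,)\sum_{l=1}^3 x_l\partial_{x_l}g$ in (1) are exactly the crux, the specializations giving (2) and (3) are handled correctly (including the computations $G(x_0)=|\vec{q}\,|^2$ and $G(\vec{q}\,)=\vec{q}\,^2=-|\vec{q}\,|^2$), and the cancellation of the $\pm|\vec{q}\,|^2 f$ terms in the base case of the induction for (4) is the right observation.
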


 We recall below the variations of the Fueter mapping theorem that we will use later in this paper and refer the reader to \cite{ColomboSabadiniSommen2010,Qian2014} for several extensions.
\begin{thm}[Fueter mapping theorem \cite{ColomboSabadiniSommen2010}]\label{FMthm}
Let $U$ be an axially symmetric set in $\mathbb H$ and let $f:U\subset \Hq\longrightarrow \Hq$ be a slice hyperholomorphic function of the form $f(x+yI)=\alpha(x,y)+I\beta(x,y)$,where $\alpha(x,y)$ and $\beta(x,y)$ are quaternionic-valued functions such that $\alpha(x,-y)=\alpha(x,y)$, $\beta(x,-y)=-\beta(x,y)$ and satisfying the Cauchy-Riemann system. Then, the function $$\overset{\sim}f(x_0+\vec{q}\,)=\displaystyle \Delta\left(\alpha(x_0,\vert{\vec{q}\,}\vert)+\frac{\vec{q}\,}{{|\vec{q}\,|}}\beta(x_0,\vert{\vec{q}\,}\vert)\right)$$
extends to a Fueter regular function on the whole $U$.
\end{thm}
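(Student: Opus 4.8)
The plan is to exploit the classical factorization of the four-dimensional Laplacian through the Cauchy--Fueter operator. Writing $\overline{\mathcal D}=\partial_{x_0}-i\partial_{x_1}-j\partial_{x_2}-k\partial_{x_3}$ for the conjugate of $\mathcal D$, the relations $e_le_m+e_me_l=-2\delta_{lm}$ give $\mathcal D\,\overline{\mathcal D}=\overline{\mathcal D}\,\mathcal D=\Delta$, where $\Delta$ is the standard Laplacian on $\R^4$; this is the structural reason $\Delta$ is the correct operator to apply. Since $\mathcal D$ and $\Delta$ have constant coefficients they commute, so for $\widetilde f=\Delta F$ with $F(q)=\alpha(x_0,\rho)+\omega\,\beta(x_0,\rho)$ (here $\rho:=|\vec q\,|$ and $\omega:=\vec q/|\vec q\,|$, an imaginary unit) one has $\mathcal D\widetilde f=\mathcal D\Delta F=\Delta(\mathcal D F)$. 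Hence it suffices to prove that $\mathcal D F$ is harmonic, i.e. that the quaternion-valued function $\mathcal D F$ lies componentwise in $\ker\Delta$. This reduces the theorem to two scalar computations.

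First I would compute $\mathcal D F$ explicitly. Denoting by $\partial_x$ and $\partial_\rho$ the derivatives in the first and second slots, note $\partial_{x_l}\rho=x_l/\rho$, so $\sum_{l=1}^3 e_l\partial_{x_l}\alpha=\omega\,\partial_\rho\alpha$; for the $\beta$-terms one uses $\partial_{x_l}\omega=e_l/\rho-\vec q\,x_l/\rho^3$ together with the algebraic identities $\sum_{l=1}^3 e_l^2=-3$ and $\vec q^{\,2}=-\rho^2$, which yield $\sum_{l}e_l\partial_{x_l}\omega=-2/\rho$ and $\sum_l e_l\omega\,\partial_{x_l}\beta=-\partial_\rho\beta$. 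Collecting the scalar and $\omega$-parts one obtains
\[
\mathcal D F=\big(\partial_x\alpha-\partial_\rho\beta-\tfrac{2}{\rho}\beta\big)+\omega\big(\partial_\rho\alpha+\partial_x\beta\big).
\]
The hypotheses that $\alpha,\beta$ satisfy the Cauchy--Riemann system on the slice (with $y=\rho$), namely $\partial_x\alpha=\partial_\rho\beta$ and $\partial_\rho\alpha=-\partial_x\beta$, kill the $\omega$-component and collapse the scalar one, leaving the clean identity $\mathcal D F=-\,2\beta(x_0,\rho)/\rho$.

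It then remains to check that $g:=-2\beta(x_0,\rho)/\rho$ is harmonic on $\R^4$. Splitting $\Delta=\partial_{x_0}^2+\Delta_{\R^3}$ and using that for a function of $\rho=|\vec q\,|$ alone the spatial Laplacian is $\Delta_{\R^3}=\partial_\rho^2+\tfrac{2}{\rho}\partial_\rho$, the $\tfrac{1}{\rho^2}$ and $\tfrac{1}{\rho^3}$ contributions cancel and one is left with $\Delta g=-\tfrac{2}{\rho}\big(\partial_x^2\beta+\partial_\rho^2\beta\big)$. Because $\alpha+I\beta$ is slice hyperholomorphic, differentiating the Cauchy--Riemann equations forces $\beta$ (and $\alpha$) to be harmonic in the two variables $(x,\rho)$, so $\partial_x^2\beta+\partial_\rho^2\beta=0$ and $\Delta g=0$. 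Therefore $\Delta(\mathcal D F)=0$ and $\mathcal D\widetilde f=0$ wherever $F$ is defined.

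The point I expect to be the main obstacle is not the algebra above but the assertion that $\widetilde f$ \emph{extends} regularly across the real axis $\vec q=0$, where the coordinates $\rho,\omega$ degenerate. Here I would invoke the parity hypotheses: since $\beta(x,-y)=-\beta(x,y)$ is odd, the quotient $\beta(x_0,\rho)/\rho$ is an even function of $\rho$, hence a smooth function of $\rho^2=|\vec q\,|^2$ and thus of $\vec q$; consequently $\omega\beta=\vec q\,\beta/\rho$ is smooth and vanishes on $\R$, so $F$ and $\widetilde f=\Delta F$ are of class $\mathcal C^\infty$ on all of $U$. Since $\mathcal D\widetilde f$ vanishes on the dense open set $U\setminus\R$ and $\widetilde f$ is $\mathcal C^1$, continuity forces $\mathcal D\widetilde f\equiv 0$ on $U$, giving the desired global Fueter regularity.
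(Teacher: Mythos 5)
The paper does not prove this statement: it is recalled as Theorem~\ref{FMthm} with a citation to \cite{ColomboSabadiniSommen2010}, so there is no in-paper argument to compare against. Your proof is the classical direct computation (essentially Fueter's original argument) and it is correct. The algebra checks out: $\sum_l e_l\partial_{x_l}\omega=-3/\rho-\vec q^{\,2}/\rho^3=-2/\rho$ and $\sum_l (x_l/\rho)e_l\omega=\omega^2=-1$ give
\[
\mathcal D F=\bigl(\partial_x\alpha-\partial_\rho\beta-\tfrac{2}{\rho}\beta\bigr)+\omega\bigl(\partial_\rho\alpha+\partial_x\beta\bigr)=-\tfrac{2}{\rho}\,\beta ,
\]
the non-commutativity being harmless because every quaternionic coefficient collapses to a scalar before it meets $\beta$; the radial identity $\Delta_{\R^3}(\beta/\rho)=\partial_\rho^2\beta/\rho$ and the harmonicity of $\beta$ in $(x,\rho)$ (obtained by differentiating the Cauchy--Riemann system) then give $\Delta(\mathcal D F)=0$, hence $\mathcal D\widetilde f=\Delta(\mathcal D F)=0$ off the real axis. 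You are also right that the only delicate point is the extension across $\vec q=0$; your parity argument is the standard fix, though to assert that the even function $\beta(x,\rho)/\rho$ is a smooth function of $\rho^2$ you are implicitly using either Whitney's theorem on even $\mathcal C^\infty$ functions or, more simply, the real-analyticity of $\alpha,\beta$ (which holds here since $f$ is slice hyperholomorphic) --- it would be worth saying which. One further small caveat: the statement as recalled in the paper only says ``axially symmetric set'', while your density-and-continuity argument for $\mathcal D\widetilde f\equiv 0$ on all of $U$ tacitly assumes $U$ is open; that is the intended hypothesis, but you should state it.
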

\begin{rem}\label{FMrem}
If $U$ is an axially symmetric slice domain in $\mathbb{H}$, then every slice hyperholomorphic function $f:U\subset\Hq\longrightarrow \Hq$ is of the form $f(x+Iy)=\alpha(x,y)+I\beta(x,y)$, where $\alpha$ and $\beta$ have the properties mentioned in the preceding statement. This is an immediate consequence of the Representation formula observed in  Lemma 2.2 in \cite{CS}.
\end{rem}
 A function $f(x+yI)=\alpha(x,y)+I\beta(x,y)$, where $\alpha, \beta$ are $\mathbb H$ (or $\mathbb R_n$)-valued, $\alpha(x,-y)=\alpha(x,y)$, $\beta(x,-y)=-\beta(x,y)$ is called a slice function.
\begin{rem}
We denote by $\mathcal{SR}(U)$ the space of slice regular functions which are slice functions. Below, we can consider the Fueter mapping defined by $$\tau:\mathcal{SR}(U)\rightarrow\mathcal{FR}(U), \text{ } f\longmapsto \tau(f)=\Delta(f).$$
\end{rem}

\begin{thm}[\cite{ColomboSabadiniSommen2010}] \label{CFK}
Given a quaternion $s\in\Hq$, we define
$$[s]=\lbrace p\in\Hq: \textbf{ } p=Re(s)+I|\vec{s}\,|, I\in\mathbb{S} \rbrace .$$
Let $S^{-1}(s,q)$ be the Cauchy kernel defined by:
$$S^{-1}(s,q)=(s-\overline{q})(s^2-2Re(q)s+|q|^2)^{-1}, \textbf{ } q\notin [s].$$

Then the function
$$\displaystyle \mathcal{F}(s,q):=\Delta S^{-1}(s,q)=-4(s-\overline{q})(s^2-2Re(q)s+|q|^2)^{-2},$$
is a Cauchy-Fueter regular function in the variable $q$, and it is right slice regular in the variable $s$ for $q\notin [s]$.
\end{thm}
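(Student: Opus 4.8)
The plan is to build on two structural properties of the Cauchy kernel: that $S^{-1}(s,q)$ is left slice regular in $q$ on the axially symmetric set $\Hq\setminus[s]$ (the complement of the sphere $[s]=Re(s)+|\vec{s}\,|\mathbb{S}$, which meets each slice $\C_I$ in the two points $Re(s)\pm|\vec{s}\,|I$), and that it is right slice regular in $s$ on the same set; both are standard in slice function theory, cf.\ \cite{ColomboSabadiniSommen2010}. Granting these, Cauchy-Fueter regularity of $\mathcal{F}$ in $q$ is immediate from the Fueter mapping theorem (Theorem \ref{FMthm}): applying the Laplacian $\Delta$ in the variable $q$ to the slice regular function $S^{-1}(\cdot,q)$ yields a Fueter regular function. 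The real work is therefore to evaluate $\Delta_q S^{-1}(s,q)$ in closed form and to propagate the right slice regularity in $s$ through the Laplacian.

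For the computation I would set $S^{-1}(s,q)=u\,D^{-1}$ with
$$
u:=s-\overline{q}=(s-x_0)+x_1 i+x_2 j+x_3 k,\qquad D:=s^2-2Re(q)s+|q|^2=s^2-2x_0 s+\sum_{l=0}^3 x_l^2 .
$$
The decisive point is that $D$ and each of its derivatives $\partial_{x_0}D=2(x_0-s)$, $\partial_{x_l}D=2x_l$ ($l=1,2,3$) are polynomials in $s$ with \emph{real} coefficients, so they commute with one another and with $D^{-1}$; consequently $\partial_{x_l}(D^{-1})=-(\partial_{x_l}D)D^{-2}$ and $\partial_{x_l}^2(D^{-1})=2(\partial_{x_l}D)^2 D^{-3}-(\partial_{x_l}^2 D)D^{-2}$ with no ordering ambiguity. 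Since $u$ is affine in the $x_l$ one has $\partial_{x_l}^2 u=0$, and the Leibniz rule gives
$$
\Delta(uD^{-1})=\sum_{l=0}^3\Big(-2(\partial_{x_l}u)(\partial_{x_l}D)D^{-2}+2u(\partial_{x_l}D)^2 D^{-3}-u(\partial_{x_l}^2 D)D^{-2}\Big).
$$

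In this expression the last two groups of terms cancel: one checks $\sum_{l=0}^3(\partial_{x_l}D)^2=4D$ (using $(x_0-s)^2+|\vec{q}\,|^2=D$) and $\sum_{l=0}^3\partial_{x_l}^2 D=8$, whence $2u(4D)D^{-3}-8u\,D^{-2}=0$. What survives is the first group, and because $\sum_{l=0}^3(\partial_{x_l}u)(\partial_{x_l}D)=2\big((s-x_0)+\vec{q}\,\big)=2(s-\overline{q})=2u$, we arrive at
$$
\Delta S^{-1}(s,q)=-4u\,D^{-2}=-4(s-\overline{q})\big(s^2-2Re(q)s+|q|^2\big)^{-2},
$$
which is exactly $\mathcal{F}(s,q)$. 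Fueter regularity of $\mathcal{F}$ in $q$ then follows from Theorem \ref{FMthm} as noted (one could equally verify $\mathcal{D}\mathcal{F}=0$ directly from this closed formula). For right slice regularity in $s$, I would observe that the slice operator in $s$ and the Laplacian $\Delta_q$ act on independent sets of variables and hence commute on the smooth function $S^{-1}$; applying the right slice operator in $s$ to $\mathcal{F}=\Delta_q S^{-1}$ therefore returns $\Delta_q$ of zero, so $\mathcal{F}$ is right slice regular in $s$ on $\Hq\setminus[s]$.

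The main obstacle is the Laplacian computation and, within it, controlling the noncommutativity. The entire simplification rests on the reduction described above: everything produced by the denominator lives in the commutative real-polynomial algebra generated by $s$, so noncommutativity enters only through the numerator $u$, whose derivatives are the constant imaginary units $i,j,k$. The one point requiring care is the cancellation of the two scalar-Laplacian contributions; once the identities $\sum_l(\partial_{x_l}D)^2=4D$ and $\sum_l\partial_{x_l}^2 D=8$ are established, the clean formula for $\mathcal{F}$ drops out.
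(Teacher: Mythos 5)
The paper states Theorem \ref{CFK} without proof, citing \cite{ColomboSabadiniSommen2010}, so there is no internal argument to compare against; your proof is correct and follows the standard route of that reference — a direct Leibniz/Laplacian computation exploiting that $D$ and all its partial derivatives lie in the commutative algebra $\mathbb{R}[s]$ (so noncommutativity enters only through the numerator $u$), followed by the Fueter mapping theorem for regularity in $q$ and the commutation of $\Delta_q$ with the right slice operator in $s$. The three identities your cancellation rests on, $\sum_{l}(\partial_{x_l}D)^2=4D$, $\sum_{l}\partial_{x_l}^2D=8$ and $\sum_{l}(\partial_{x_l}u)(\partial_{x_l}D)=2u$, all check out.
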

\begin{thm}[The Fueter mapping theorem in integral form \cite{ColomboSabadiniSommen2010}]
Let $W\subset\mathbb{H}$ be an axially symmetric open set and let $f$ be slice hyperholomorphic in $W$. Let $U$ be a bounded axially symmetric open set such that $\overline{U}\subset W$.  Suppose that the boundary of $U_I=U\cap\C_I$ consists of finite number of rectifiable Jordan curves for any $I\in\mathbb{S}$. Then, if $q\in U$, the Cauchy–Fueter regular function given by

 $$\tau(f)(q)=\Delta f(q)$$
has the integral representation
$$\tau(f)(q)=\displaystyle\frac{1}{2\pi} \int_{\partial U_I} \Delta S^{-1}(s,q)ds_If(s), \textbf{  } ds_I=ds/I,$$
and the integral does not depend on $U$ nor on the imaginary unit $I\in\mathbb{S}.$
\end{thm}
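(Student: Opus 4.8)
The plan is to derive the integral formula by applying the Fueter map $\tau=\Delta$ directly to the slice hyperholomorphic Cauchy representation of $f$, and then to transport the regularity and independence properties from the kernel. First I would recall that, under the stated hypotheses on $W$, on the bounded axially symmetric set $U$ with $\overline{U}\subset W$, and on the boundaries $\partial U_I$, the function $f$ admits the slice hyperholomorphic Cauchy formula
$$f(q)=\frac{1}{2\pi}\int_{\partial U_I} S^{-1}(s,q)\,ds_I\,f(s),\qquad q\in U,$$
which is itself known to be independent of the choice of $U$ (as long as $q\in U$ and $\overline{U}\subset W$) and of the imaginary unit $I\in\mathbb{S}$; see \cite{CSS,GentiliSS}. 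Applying the Laplacian $\Delta$ in the variable $q$ to both sides produces $\tau(f)(q)=\Delta f(q)$ on the left-hand side, which is exactly the function we want to represent.

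The central step is to justify passing $\Delta$ under the integral sign. For a fixed $q\in U$ the point $q$ stays at a strictly positive distance from the compact set $\partial U_I$, so on a small closed ball around $q$ contained in $U$ the kernel $S^{-1}(s,q)$, together with all its partial derivatives in $q$ up to second order, is continuous and hence uniformly bounded as $s$ ranges over $\partial U_I$. Since $\partial U_I$ is a finite union of rectifiable Jordan curves and therefore has finite length, the standard Leibniz rule for parameter-dependent integrals applies and yields
$$\Delta f(q)=\frac{1}{2\pi}\int_{\partial U_I}\Delta S^{-1}(s,q)\,ds_I\,f(s).$$
By Theorem \ref{CFK} one has $\Delta S^{-1}(s,q)=\mathcal{F}(s,q)$, so substituting this identity gives precisely the claimed representation for $\tau(f)(q)$.

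It then remains to verify the two assertions about the resulting function. Cauchy--Fueter regularity in $q$ follows by differentiating under the integral once more: Theorem \ref{CFK} guarantees that $\mathcal{F}(s,q)$ is Cauchy--Fueter regular in $q$ for $q\notin[s]$, so applying the operator $\mathcal{D}$ under the integral annihilates the integrand pointwise for every $s\in\partial U_I$, whence $\mathcal{D}\tau(f)=0$ on $U$. Finally, independence of the integral from $U$ and from $I\in\mathbb{S}$ is immediate a posteriori: the integral has been shown to equal $\tau(f)(q)=\Delta f(q)$, an expression manifestly built from $f$ alone and carrying no reference to $U$ or to the slice $\C_I$.

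I expect the only genuine technical point to be the interchange of $\Delta$ (and subsequently of $\mathcal{D}$) with the integral; everything else is a formal transport of facts already available from the slice Cauchy formula and from Theorem \ref{CFK}. This interchange is harmless precisely because $q$ is interior to $U$ and thus separated from the integration set $\partial U_I$, and it is exactly this separation---and the attendant uniform bounds on the kernel and its derivatives over the compact curve of finite length---that must be invoked to make the differentiation under the integral sign rigorous.
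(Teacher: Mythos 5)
Your argument is correct and is essentially the standard proof of this result: the paper itself only recalls this theorem from \cite{ColomboSabadiniSommen2010} without proof, and the argument given there is precisely yours (apply $\Delta$ to the slice Cauchy formula, differentiate under the integral using the uniform bounds coming from $q\notin[s]$ for $s\in\partial U_I$, and read off regularity and independence from the kernel and from the identity with $\Delta f$). The only point worth stating more carefully is that the relevant separation is between $q$ and the singular set $[s]$ of the kernel rather than $\partial U_I$ itself, which follows from the axial symmetry of $U$ since $[q]\subset U$ while $\partial U_I\cap U=\emptyset$.
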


We will need also these useful results in our computations
\begin{prop}[\cite{Begehr1999}] \label{DiracAct}
For all $n\geq 2$, we have
$$\mathcal{D} [q^n]=-2\displaystyle\sum_{k=1}^{n}q^{n-k}\overline{q}^{k-1}.$$
\end{prop}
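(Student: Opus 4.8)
The plan is to prove the formula by induction on $n$, the essential ingredient being a non-commutative Leibniz-type rule for the operator $\mathcal{D}$ applied to a product whose right factor is the identity function $q$. Writing $e_0=1,e_1=i,e_2=j,e_3=k$, so that $\mathcal{D}=\sum_{l=0}^{3}e_l\partial_{x_l}$, and applying the ordinary product rule to the real partial derivatives, one obtains for any $\mathcal{C}^1$ function $f$
$$\mathcal{D}[fq]=\mathcal{D}[f]\,q+f\,\partial_{x_0}q+\sum_{l=1}^{3}e_l f\,\partial_{x_l}q=\mathcal{D}[f]\,q+f+\sum_{l=1}^{3}e_l f e_l,$$
since $\partial_{x_0}q=1$ and $\partial_{x_l}q=e_l$. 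The obstruction here is purely algebraic: the units $e_l$ do not commute with the value $f$, so the sum $\sum_{l=1}^{3}e_l f e_l$ must be evaluated by hand.

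The key computation is therefore the identity $\sum_{l=1}^{3}e_l f e_l=-f-2\overline{f}$, valid for every quaternion $f$. I would verify it by expanding $f=f_0+f_1 i+f_2 j+f_3 k$ and computing $ifi$, $jfj$, $kfk$ separately from the multiplication table; collecting the four components produces $-3f_0+f_1 i+f_2 j+f_3 k$, which is exactly $-f-2\overline{f}$ once one writes $f_0=\tfrac12(f+\overline{f})$. Substituting this into the previous display yields the clean recursion
$$\mathcal{D}[fq]=\mathcal{D}[f]\,q-2\overline{f}.$$

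Before iterating I would record the elementary fact that $q$ and $\overline{q}$ commute, which follows from $\overline{q}=2x_0-q$; consequently every monomial $q^a\overline{q}^{\,b}$ is unambiguous and its factors may be reordered freely, and moreover $\overline{q^{\,m}}=\overline{q}^{\,m}$. Taking $f=q^{n-1}$, the recursion becomes $\mathcal{D}[q^n]=\mathcal{D}[q^{n-1}]\,q-2\overline{q}^{\,n-1}$. The base case is immediate, $\mathcal{D}[q]=1+i\cdot i+j\cdot j+k\cdot k=-2$, which already agrees with the asserted formula evaluated at $n=1$.

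Finally I would close the induction. Assuming $\mathcal{D}[q^{n-1}]=-2\sum_{k=1}^{n-1}q^{\,n-1-k}\overline{q}^{\,k-1}$, multiplying on the right by $q$ and using commutativity to push the factor $q$ past each $\overline{q}^{\,k-1}$ gives $-2\sum_{k=1}^{n-1}q^{\,n-k}\overline{q}^{\,k-1}$; adding the leftover term $-2\overline{q}^{\,n-1}$, which is precisely the $k=n$ summand $-2q^{\,0}\overline{q}^{\,n-1}$, reconstitutes $-2\sum_{k=1}^{n}q^{\,n-k}\overline{q}^{\,k-1}$, as required. The only genuinely delicate point of the argument is the algebraic identity for $\sum_{l}e_l f e_l$; once that is established and the commutativity of $q$ and $\overline{q}$ is in hand, the remainder is bookkeeping.
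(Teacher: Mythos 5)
Your argument is correct, and it is worth noting that the paper does not prove this proposition at all: it is quoted from \cite{Begehr1999} without proof, so your write-up supplies a genuinely self-contained derivation. All the checkable steps go through: the identity $\sum_{l=1}^{3}e_l f e_l=-f-2\overline{f}$ is verified by the componentwise computation you describe (each of $ifi$, $jfj$, $kfk$ negates the real part and the corresponding imaginary component, giving $-3f_0+\vec{f}=-f-2\overline{f}$), the resulting recursion $\mathcal{D}[fq]=\mathcal{D}[f]\,q-2\overline{f}$ is exactly the right Leibniz-type rule, and the commutativity of $q$ with $\overline{q}$ (via $\overline{q}=2x_0-q$) legitimizes the reindexing in the induction step. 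One tiny presentational point: the proposition is stated for $n\geq 2$, and you anchor the induction at $n=1$ where the formula reads $\mathcal{D}[q]=-2$; that is consistent and harmless, but you should say explicitly that you are proving the formula for all $n\geq 1$ (or start the induction at $n=2$ with $\mathcal{D}[q^2]=-2(q+\overline{q})$, which also follows at once from your recursion). As an aside, your recursion immediately yields the closed form by telescoping rather than induction, and it is the same structural mechanism that underlies the poly-decompositions used later in the paper.
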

\begin{prop}[\cite{DKS2019}] \label{FueterAct}
For all $n\geq 2$, we have
$$\tau [q^n]=-4\displaystyle\sum_{k=1}^{n-1}(n-k)q^{n-k-1}\overline{q}^{k-1}.$$
\end{prop}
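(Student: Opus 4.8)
The plan is to prove the identity by induction on $n$, reducing the computation of $\tau[q^n]=\Delta[q^n]$ to the already-known action of the Cauchy–Fueter operator $\mathcal{D}$ recorded in Proposition~\ref{DiracAct}. The bridge between the two is a Leibniz-type recursion for the scalar Laplacian $\Delta=\partial_{x_0}^2+\partial_{x_1}^2+\partial_{x_2}^2+\partial_{x_3}^2$ applied to the factorization $q^{n+1}=q\cdot q^n$.

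For any two functions $f,g:\Omega\to\Hq$ of class $\mathcal{C}^2$, differentiating twice in each coordinate and keeping the noncommutative order of the factors gives
$$\Delta(fg)=(\Delta f)\,g+2\sum_{l=0}^{3}(\partial_{x_l}f)(\partial_{x_l}g)+f\,(\Delta g).$$
Taking $f=q$, which is harmonic ($\Delta q=0$) and satisfies $\partial_{x_l}q=e_l$ for $l=0,1,2,3$ (with $e_0=1$), the cross term is exactly the Cauchy–Fueter operator:
$$\sum_{l=0}^{3}(\partial_{x_l}q)(\partial_{x_l}q^n)=\sum_{l=0}^{3}e_l\,\partial_{x_l}q^n=\mathcal{D}[q^n].$$
Setting $g=q^n$ in the product rule and invoking Proposition~\ref{DiracAct} yields the recursion
$$\tau[q^{n+1}]=\Delta[q^{n+1}]=2\,\mathcal{D}[q^n]+q\,\Delta[q^n]=-4\sum_{k=1}^{n}q^{n-k}\overline{q}^{\,k-1}+q\,\tau[q^n].$$

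With this recursion the induction is routine. For the base case $n=2$ one checks directly that $\tau[q^2]=\Delta[q^2]=-4$, which agrees with the right-hand side of the claimed formula. For the inductive step I substitute the hypothesis $\tau[q^n]=-4\sum_{k=1}^{n-1}(n-k)q^{n-k-1}\overline{q}^{\,k-1}$; since $q$ commutes with its own powers, left multiplication by $q$ sends the summand $q^{n-k-1}\overline{q}^{\,k-1}$ to $q^{n-k}\overline{q}^{\,k-1}$ with no reordering, so the two resulting sums live on the same monomials and combine via the elementary identity $1+(n-k)=(n+1)-k$, producing $-4\sum_{k=1}^{n}\big((n+1)-k\big)q^{n-k}\overline{q}^{\,k-1}$, which is precisely the asserted formula at order $n+1$.

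The only point requiring care --- and the place where a careless argument could go wrong --- is the noncommutativity of $\Hq$: one must preserve the order of the factors throughout the product rule and verify that the left factor $q$ genuinely merges with the $q$-powers, and not the $\overline{q}$-powers, in each monomial. Here this happens automatically, because the recursion multiplies by $q$ on the left while every monomial already carries its $q$-powers on the left, so $q$ is never commuted past $\overline{q}$. An alternative route would be to use the factorization $\Delta=\overline{\mathcal{D}}\,\mathcal{D}$ and apply $\overline{\mathcal{D}}$ to the output of Proposition~\ref{DiracAct}, but this forces the computation of $\overline{\mathcal{D}}$ on mixed monomials $q^{a}\overline{q}^{\,b}$, which is considerably messier; the recursion above sidesteps that difficulty entirely.
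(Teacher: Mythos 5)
Your argument is correct and complete: the noncommutative Leibniz rule $\Delta(fg)=(\Delta f)g+2\sum_{l=0}^{3}(\partial_{x_l}f)(\partial_{x_l}g)+f(\Delta g)$ is valid provided the order of factors is kept (which you do), the identification of the cross term with $\mathcal{D}[q^n]$ when $f=q$ is exact, the base case $\Delta[q^2]=-4$ checks out, and the re-indexing $1+(n-k)=(n+1)-k$ closes the induction without ever commuting $q$ past $\overline{q}$. Note, however, that the paper itself offers no proof of this proposition --- it is quoted verbatim from the reference \cite{DKS2019} --- so there is no internal argument to compare yours against; your derivation is a self-contained alternative that needs only Proposition \ref{DiracAct} and elementary calculus, and it is cleaner than the route via the factorization $\Delta=\overline{\mathcal{D}}\mathcal{D}$ that you rightly set aside.
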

In \cite{ADS2019} the theory of slice hyperholomorphic functions on quaternions is extended to higher order by considering:
\begin{defn}
Let $\Omega$ be an axially symmetric open set in $\Hq$ and let $f:\Omega\longrightarrow \Hq$ a slice function of class $\mathcal{C}^n$. For each $I\in\Sq$, let $\Omega_I=\Omega\cap\C_I$ and let $f_I=f_{\vert_{\Omega_I}}$ be the restriction of $f$ to $\Omega_I$. The restriction $f_I$ is called (left) polyanalytic of order $n$ if it satisfies on $\Omega_I$ the equation $$
\overline{\partial_I}^n f(x+Iy):=
\frac{1}{2^n}\left(\frac{\partial }{\partial x}+I\frac{\partial }{\partial y}\right)^nf_I(x+Iy)=0.
$$
The function $f$ is called left slice polyanalytic of order $n$, if for all $I\in\Sq$, $f_I$ is left polyanalytic of order $n$ on $\Omega_I$. The right quaternionic vector space of slice polyanalytic functions of order $n$ will be denoted by $\mathcal{SP}_n(U)$.
\end{defn}
Note that slice regular functions are a special case of the definition of slice polyanalytic functions with $n=1$. The right slice polyanalytic functions can be defined in a similar way just by taking the powers  of the Cauchy-Riemann operator with imaginary unit on the right. Several results of these functions were studied and extended. In particular, we recall some properties that we need for our computations in the next sections.
\begin{prop}[Splitting Lemma]\label{split} Let $f$ be a slice polyanalytic function of order $n$ on an axially symmetric domain $\Omega\subseteq\Hq$. Then, for any imaginary units $I$ and $J$ with $I\perp J$ there exist $F,G:\Omega_{I}\longrightarrow{\C_I}$ polyanalytic functions of order $n$ such that for all $z=x+Iy\in\Omega_I$, we have
$$f_I(z)=F(z)+G(z)J.$$
 \end{prop}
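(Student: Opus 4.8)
The plan is to reduce everything to the classical fact that $\Hq$ decomposes, as a real vector space (indeed as a left $\C_I$-module), into $\C_I \oplus \C_I J$, and then to verify that the slice Cauchy--Riemann operator $\overline{\partial_I}$ respects this decomposition.

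First I would set up the splitting at the level of values. Since $I \perp J$, the quadruple $\{1, I, J, IJ\}$ is an orthonormal basis of $\Hq$, and every $q \in \Hq$ admits a unique expression $q = z_1 + z_2 J$ with $z_1, z_2 \in \C_I = \R + \R I$. Applying this pointwise to the $\Hq$-valued map $f_I$ produces two uniquely determined functions $F, G : \Omega_I \longrightarrow \C_I$ with $f_I(z) = F(z) + G(z) J$ on $\Omega_I$. At this stage $F$ and $G$ are merely of class $\mathcal{C}^n$; the substance of the lemma is that each is polyanalytic of order $n$.

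Next I would check that $\overline{\partial_I}$ acts componentwise on such a decomposition. The only point to verify is that left multiplication by $I$ preserves both summands: for $\phi : \Omega_I \to \C_I$ one has $I\phi \in \C_I$ because $\C_I$ is commutative and closed under products, while $I(\psi J) = (I\psi)J \in \C_I J$ for the second summand, by associativity and the fact that $I\psi \in \C_I$. Consequently
$$
\overline{\partial_I}(F + GJ)
= \tfrac{1}{2}\left(\frac{\partial}{\partial x} + I \frac{\partial}{\partial y}\right)(F + GJ)
= (\overline{\partial_I} F) + (\overline{\partial_I} G)\,J,
$$
and iterating $n$ times gives $\overline{\partial_I}^n(F + GJ) = (\overline{\partial_I}^n F) + (\overline{\partial_I}^n G)\,J$.

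Finally, since $f$ is slice polyanalytic of order $n$ the left-hand side vanishes identically on $\Omega_I$, so $(\overline{\partial_I}^n F) + (\overline{\partial_I}^n G) J = 0$. As $\overline{\partial_I}^n F$ lies in $\C_I$ while $(\overline{\partial_I}^n G) J$ lies in $\C_I J$, and $\C_I \cap \C_I J = \{0\}$, both terms vanish separately: $\overline{\partial_I}^n F = \overline{\partial_I}^n G = 0$. Identifying $\C_I$ with $\C$ (with $I$ playing the role of the imaginary unit), this says exactly that $F$ and $G$ are complex polyanalytic of order $n$ on $\Omega_I$, as claimed. I expect no serious obstacle: the one point requiring genuine care is the componentwise action of $\overline{\partial_I}$, which rests on the commutativity of $\C_I$ together with the inclusion $I \cdot \C_I J \subseteq \C_I J$, and this is precisely what lets the argument pass verbatim from the slice regular case $n=1$ to arbitrary $n$.
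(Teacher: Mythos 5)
Your argument is correct and is essentially the standard one: the paper does not prove this Splitting Lemma itself but recalls it from \cite{ADS2019}, where the proof proceeds exactly as you describe, decomposing $\Hq=\C_I\oplus\C_I J$ and observing that $\overline{\partial_I}$ (hence $\overline{\partial_I}^n$) acts componentwise because left multiplication by $I$ preserves both summands. No gaps.
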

 We will be interested also by the following decomposition
\begin{prop}[Poly-decomposition]\label{carac3}
A function $f:\Omega\longrightarrow \Hq$ defined on an axially symmetric slice domain is slice polyanalytic of order $n$ if and only there exist $f_0,...,f_{n-1}$ some unique slice hyperholomorphic functions on $\Omega$ such that we have the following decomposition: $$
f(q):=\displaystyle\sum_{k=0}^{n-1}\overline{q}^kf_k(q); \textbf{ }\forall q\in\Omega.
$$
\end{prop}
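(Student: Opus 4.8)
The plan is to prove both implications by induction on the order $n$, using as the engine a single slicewise operator identity together with the observation that applying $\overline{\partial_I}$ slicewise to a slice function again produces a slice function.

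First I would record the basic computation on a fixed slice $\C_I$: writing $z=x+Iy$ and $\bz=x-Iy$, since $I$ commutes with $\bz$ one gets $\overline{\partial_I}(\bz)=1$ and $\overline{\partial_I}(z)=0$, whence for any $\mathcal{C}^1$ function $g$
$$\overline{\partial_I}\bigl(\bz^{k}g\bigr)=k\,\bz^{k-1}g+\bz^{k}\,\overline{\partial_I}g.$$
In particular, if $g$ is slice hyperholomorphic, so that $\overline{\partial_I}g_I=0$, iterating yields $\overline{\partial_I}^{\,m}(\bz^{k}g)=\frac{k!}{(k-m)!}\,\bz^{\,k-m}g$ for $m\le k$ and $\overline{\partial_I}^{\,m}(\bz^{k}g)=0$ for $m>k$. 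This identity immediately gives the implication ``$\Leftarrow$'': if $f=\sum_{k=0}^{n-1}\bq^{k}f_k$ with the $f_k$ slice hyperholomorphic, then each summand is annihilated by $\overline{\partial_I}^{\,n}$ on every slice, so $f$ is slice polyanalytic of order $n$. That $f$ is genuinely a slice function follows from the fact that $\bq^{k}=A_k(x,y)+IB_k(x,y)$ with $A_k$ even and $B_k$ odd in $y$, so the product $\bq^{k}f_k$ of two slice functions is again a slice function, its two components being even, resp.\ odd, in $y$.

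The heart of the matter is ``$\Rightarrow$'', and the main obstacle is to guarantee that the coefficient functions one extracts slicewise are genuinely \emph{global} slice hyperholomorphic functions and not merely holomorphic on individual slices. The key remark resolving this is that $\overline{\partial_I}$ respects the slice structure: for a slice function $f=\alpha(x,y)+I\beta(x,y)$ one computes
$$\overline{\partial_I}f=\tfrac12\bigl[(\partial_x\alpha-\partial_y\beta)+I(\partial_x\beta+\partial_y\alpha)\bigr],$$
and since $\alpha$ is even and $\beta$ is odd in $y$, the new components $\tfrac12(\partial_x\alpha-\partial_y\beta)$ and $\tfrac12(\partial_x\beta+\partial_y\alpha)$ are again even, resp.\ odd, in $y$ and do not depend on $I$. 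Hence the family $\{\overline{\partial_I}f_I\}_{I\in\Sq}$ is the restriction family of a single slice function, and iterating produces a well-defined slice function on $\Omega$, which I will still denote $\overline{\partial_I}^{\,m}f$. When $f$ has order $n$, the slice function $\overline{\partial_I}^{\,n-1}f$ satisfies $\overline{\partial_I}\bigl(\overline{\partial_I}^{\,n-1}f\bigr)=\overline{\partial_I}^{\,n}f=0$ on each slice; being a slice function that is holomorphic on every $\Omega_I$ of the axially symmetric slice domain $\Omega$, it is slice hyperholomorphic.

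With this in hand I would run the induction. The base case $n=1$ is the definition of slice hyperholomorphicity, with $f_0=f$. For the inductive step, set $f_{n-1}:=\frac{1}{(n-1)!}\,\overline{\partial_I}^{\,n-1}f$, which is slice hyperholomorphic by the previous paragraph. Applying the operator identity with $k=m=n-1$ gives $\overline{\partial_I}^{\,n-1}\bigl(\bq^{\,n-1}f_{n-1}\bigr)=(n-1)!\,f_{n-1}=\overline{\partial_I}^{\,n-1}f$, so $f-\bq^{\,n-1}f_{n-1}$ is a slice function annihilated by $\overline{\partial_I}^{\,n-1}$, that is, slice polyanalytic of order $n-1$. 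By the induction hypothesis $f-\bq^{\,n-1}f_{n-1}=\sum_{k=0}^{n-2}\bq^{k}f_k$ with $f_0,\dots,f_{n-2}$ slice hyperholomorphic, and adding back the top term yields the claimed decomposition. Uniqueness follows from the same identity: if $\sum_{k=0}^{n-1}\bq^{k}f_k\equiv0$, then applying $\overline{\partial_I}^{\,n-1}$ leaves only $(n-1)!\,f_{n-1}$, forcing $f_{n-1}=0$, after which applying $\overline{\partial_I}^{\,n-2},\dots,\overline{\partial_I}^{\,0}$ in turn forces $f_{n-2}=\dots=f_0=0$.
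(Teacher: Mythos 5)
Your argument is correct. Note first that this paper does not actually prove Proposition \ref{carac3}: it is recalled from \cite{ADS2019} and used as a black box, so there is no in-paper proof to match against. Your proposal is a sound, self-contained derivation. Both directions check out: the Leibniz computation $\overline{\partial_I}(\overline{z}^{k}g)=k\overline{z}^{k-1}g+\overline{z}^{k}\overline{\partial_I}g$ is legitimate because $\overline{z}^{k}$ has values in $\C_I$ and hence commutes with $I$; the parity bookkeeping showing that $\overline{\partial_I}$ maps slice functions to slice functions (with components independent of $I$) is exactly the point that upgrades ``holomorphic on each slice'' to ``globally slice hyperholomorphic,'' and it is correctly verified; and the downward induction plus the uniqueness argument by repeated application of $\overline{\partial_I}^{\,n-1},\dots,\overline{\partial_I}^{\,0}$ are standard and valid. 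For comparison, the published proof in \cite{ADS2019} reaches the same conclusion by a slightly different route: it splits $f_I=F+GJ$ into two $\C_I$-valued polyanalytic functions via the Splitting Lemma, invokes the classical complex poly-decomposition of $F$ and $G$ on the slice, and then uses the representation/extension machinery to globalize the coefficients. Your intrinsic approach trades those external lemmas for the single observation that $\overline{\partial_I}$ preserves the slice structure, which is arguably cleaner; the only cosmetic caveats are that your claim ``the product of two slice functions is a slice function'' holds here only because the components of $\overline{q}^{\,k}$ are real-valued (worth saying explicitly), and that $f_{n-1}=\frac{1}{(n-1)!}\overline{\partial_I}^{\,n-1}f$ is a priori only continuous from the $\mathcal{C}^n$ hypothesis, but slice hyperholomorphy restores all the regularity needed to continue the induction.
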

Finally, we consider the poly-Fueter regular functions that can be found for example in \cite{KahlerKuQian2017} for Clifford valued functions.
\begin{defn} Let $U\subset \Hq$ be an open set and let $f:U\longrightarrow \Hq$ be a function of class $\mathcal{C}^n$. We say that $f$ is (left) poly-Fueter regular or poly-regular for short of order $n\geq 1$ on $U$ if $$\mathcal{D}^n f(q):=\displaystyle\left(\frac{\partial}{\partial x_0}+i\frac{\partial}{\partial x_1}+j\frac{\partial}{\partial x_2}+k\frac{\partial}{\partial x_3}\right)^nf(q)=0, \forall q\in U.$$
\\
The right quaternionic vector space of poly-Fueter regular functions will be denoted by $\mathcal{FR}_n(U)$.
\end{defn}
The proof of the next result was communicated to us by Dan Volok, and appears earlier in section 6 and 7 of  \cite{B1976}, see also \cite{DB1978} for the Clifford monogenic setting. We recall it for completeness
\begin{prop}\label{Poly-Fueter-Dec}
A function $f$ is poly-Fueter regular of order $n$ if and only if it can be decomposed in terms of some unique Fueter regular functions $\phi_0,...,\phi_{n-1}$ such that we have $$f(q)=\displaystyle \sum_{k=0}^{n-1}x_0^k\phi_k(q).$$
\end{prop}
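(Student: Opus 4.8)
The plan is to base everything on a single product-type identity for the Cauchy–Fueter operator $\mathcal{D}$ acting against powers of the real variable $x_0$. First I would record that, for any $\mathcal{C}^1$ function $g$,
$$\mathcal{D}(x_0 g) = g + x_0\,\mathcal{D}g,$$
which follows at once from the Leibniz rule together with $\partial_{x_0}x_0 = 1$ and $\partial_{x_l}x_0 = 0$ for $l=1,2,3$. Iterating this gives, for every $k\geq 1$,
$$\mathcal{D}(x_0^k g) = k\,x_0^{k-1} g + x_0^k\,\mathcal{D}g.$$
This is the quaternionic analogue of the relation $\overline{\partial}(\overline z\, g)=g+\overline z\,\overline{\partial}g$ that underlies the classical polyanalytic decomposition, with $x_0$ playing the role of $\overline z$; the point is precisely that $\mathcal{D}(x_0)=1$.

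For the sufficiency (``if'') direction I would substitute a regular coefficient into the iterated identity. If $\mathcal{D}\phi=0$, then an immediate induction on $j$ using the displayed formula gives
$$\mathcal{D}^j(x_0^k\phi) = \frac{k!}{(k-j)!}\,x_0^{k-j}\phi \quad (j\leq k), \qquad \mathcal{D}^j(x_0^k\phi)=0 \quad (j>k).$$
Hence for each $k$ with $0\leq k\leq n-1$ one has $\mathcal{D}^n(x_0^k\phi_k)=0$, and summing shows that $f=\sum_{k=0}^{n-1}x_0^k\phi_k$ is poly-Fueter regular of order $n$.

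The necessity (``only if'') direction is the crux, and I would obtain it by induction on $n$. The case $n=1$ is the definition of $\mathcal{FR}(U)$. Assuming the statement for order $n-1$, let $\mathcal{D}^n f=0$ and set $g:=\mathcal{D}f$, which is poly-Fueter regular of order $n-1$ and of class $\mathcal{C}^{n-1}$; by the inductive hypothesis $g=\sum_{j=0}^{n-2}x_0^j\psi_j$ with each $\psi_j\in\mathcal{FR}(U)$ unique. The key move is to integrate $g$ back: using $\mathcal{D}(x_0^k\psi)=k\,x_0^{k-1}\psi$ for regular $\psi$, I define $\widetilde f:=\sum_{k=1}^{n-1}\tfrac1k\,x_0^k\psi_{k-1}$, so that $\mathcal{D}\widetilde f=g=\mathcal{D}f$. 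Then $\phi_0:=f-\widetilde f$ lies in $\ker\mathcal{D}=\mathcal{FR}(U)$, and putting $\phi_k:=\tfrac1k\psi_{k-1}$ for $1\leq k\leq n-1$ produces the desired decomposition $f=\sum_{k=0}^{n-1}x_0^k\phi_k$ with all $\phi_k$ Fueter regular.

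Finally, for uniqueness I would peel off the coefficients from the top. Applying $\mathcal{D}^{n-1}$ to a relation $\sum_{k=0}^{n-1}x_0^k\phi_k=0$ with regular $\phi_k$ annihilates every term except $k=n-1$ and leaves $(n-1)!\,\phi_{n-1}=0$; a downward induction (applying $\mathcal{D}^{n-2},\mathcal{D}^{n-3},\dots$) then forces all $\phi_k=0$. The only genuine obstacle is the existence step in the necessity direction, and it rests entirely on having the explicit antiderivative $\widetilde f$ available, which is exactly what the commutation identity $\mathcal{D}(x_0^k\psi)=k\,x_0^{k-1}\psi$ supplies; the regularity bookkeeping ($f\in\mathcal{C}^n\Rightarrow g\in\mathcal{C}^{n-1}$) is routine.
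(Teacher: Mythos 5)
Your proof is correct, and it is essentially the standard Almansi-type argument that the paper itself does not reproduce but only cites (Brackx \cite{B1976}, Sections 6--7): everything reduces to the commutation identity $\mathcal{D}(x_0^k g)=k\,x_0^{k-1}g+x_0^k\mathcal{D}g$, valid because $x_0$ is real and central, followed by induction on $n$ with the explicit antiderivative $\widetilde f$ and the top-down uniqueness argument. No gaps; your observation that $x_0$ (and not $\overline q$, for which $\mathcal{D}\overline q=4$ and commutativity fails) is the correct analogue of $\overline z$ is exactly the right point.
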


\section{The global operator and poly-Fueter mapping theorem }
In this section, we show that slice polyanalytic functions of some order $n$ are solutions of the n-th power of a certain global operator $V$. A new extension of the Fueter mapping theorem involving slice polyanalytic functions on quaternions will be proved also.

In \cite{P2019}, the authors considered a modified version of the operator $G$ which is defined by $$\displaystyle V(f)(q):=\partial_{x_0}f(q)+\frac{\vec{q}\,}{|\vec{q}\,|^2}\sum_{l=1}^3x_l \partial_{x_l}f(q), \forall q\in\Omega\setminus \R.$$
\begin{rem}\label{GVR}
For suitable domains, we note that the operators $G$ and $V$ are related by the formula
$$V(f)(q)=\frac{1}{|\vec{q}\,|^2}G(f)(q), \qquad \forall q\in \Omega\setminus \R.$$
In what follows, if $V(f)$ admits a (unique) continuous extension on the whole $\Omega$, then we implicitly assume that $V(f)$ denotes such an extension. Given any $n\geq 2,$ inductively we will say that $V^n(f)$ is a function on $\Omega$ if $V^{n-1}(f)$ is of class $\mathcal{C}^1$ on $\Omega\setminus \R$ and $V^n(f):=V(V^{n-1}(f))$ admits a continuous extension on $\Omega$.
\end{rem}
First, we prove some preliminary results on the global operators $G$ and $V$ that  are needed in the sequel.
\begin{lem} \label{Gqbar}
Let $\Omega$ be an open set in $\Hq$ and $\psi:\Omega\longrightarrow \Hq$ a function of class $\mathcal{C}^1$. Then, we have
$$\displaystyle G(\overline{q}\psi)(q)=\overline{q}G(\psi)(q)+2|\vec{q}\,|^2\psi(q), \textbf{ }\forall q=x_0+\vec{q}\,\in\Omega.$$
\end{lem}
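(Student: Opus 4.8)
The plan is to avoid any direct manipulation of the defining formula for $G$ and instead reduce everything to the algebraic identities already collected in Proposition~\ref{GPRO}. The key observation is that the left factor splits as $\overline{q}=x_0-\vec{q}\,$, so that $\overline{q}\psi=x_0\psi-\vec{q}\,\psi$. Since $G$ is a first order differential operator it is additive (this is the case $\lambda=1$ of item (2) of Proposition~\ref{GPRO}), hence
$$G(\overline{q}\psi)(q)=G(x_0\psi)(q)-G(\vec{q}\,\psi)(q).$$

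Next I would apply the two identities in item (3) of Proposition~\ref{GPRO}, namely $G(x_0\psi)=|\vec{q}\,|^2\psi+x_0G(\psi)$ and $G(\vec{q}\,\psi)=-|\vec{q}\,|^2\psi+\vec{q}\,G(\psi)$. Subtracting the second from the first, the two copies of $|\vec{q}\,|^2\psi$ add up to $2|\vec{q}\,|^2\psi$, while the remaining terms combine as $(x_0-\vec{q}\,)G(\psi)=\overline{q}G(\psi)$, which is exactly the claimed formula. No ordering issue arises on this route, because the scalar $x_0$ commutes with everything and the identities of item (3) already encode the correct placement of $\vec{q}\,$ on the left.

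As a cross-check one could instead invoke the Leibniz-type rule of item (1) with $f=\overline{q}$ and $g=\psi$. This requires the auxiliary computation $G(\overline{q})=|\vec{q}\,|^2-\vec{q}\,^2=2|\vec{q}\,|^2$, using $\partial_{x_l}\overline{q}=-e_l$ together with $\vec{q}\,^2=-|\vec{q}\,|^2$, and the observation that the commutator correction $(\vec{q}\,\overline{q}-\overline{q}\,\vec{q}\,)\sum_{l=1}^3 x_l\partial_{x_l}\psi$ vanishes identically, since $\overline{q}$ and $\vec{q}\,$ commute. Since the entire argument is a two-line consequence of Proposition~\ref{GPRO}, there is essentially no genuine obstacle here; the only point demanding a little care is this vanishing of the commutator term in the alternative derivation, which is why I expect the first approach, based on additivity and item (3), to be the cleanest and most economical route.
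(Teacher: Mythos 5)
Your argument is correct. Writing $\overline{q}\psi=x_0\psi-\vec{q}\,\psi$, invoking the additivity of $G$ (item (2) of Proposition~\ref{GPRO} with real scalars) and then the two identities of item (3), $G(x_0\psi)=|\vec{q}\,|^2\psi+x_0G(\psi)$ and $G(\vec{q}\,\psi)=-|\vec{q}\,|^2\psi+\vec{q}\,G(\psi)$, does yield $G(\overline{q}\psi)=2|\vec{q}\,|^2\psi+(x_0-\vec{q}\,)G(\psi)=2|\vec{q}\,|^2\psi+\overline{q}G(\psi)$, and the left placement of $\overline{q}$ comes out right because item (3) already has $\vec{q}\,$ acting on the left. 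Your cross-check via item (1) is also sound: the correction term vanishes since $\overline{q}=x_0-\vec{q}\,$ commutes with $\vec{q}\,$, and $G(\overline{q})=|\vec{q}\,|^2-\vec{q}\,^2=2|\vec{q}\,|^2$. This is, however, a genuinely different route from the paper's: the paper does not pass through Proposition~\ref{GPRO} at all, but expands $G(\overline{q}\psi)$ directly from the definition of $G$ using the Leibniz rule for the partial derivatives, and then simplifies with $\vec{q}\,=\sum_{l=1}^3x_le_l$, $\vec{q}\,\overline{q}=\overline{q}\vec{q}\,$ and $\vec{q}\,^2=-|\vec{q}\,|^2$. Your reduction is shorter and more economical, at the price of resting on the recalled identities (which are quoted from the literature rather than proved in the paper); the paper's computation is self-contained and makes the mechanism --- where the extra $2|\vec{q}\,|^2\psi$ comes from --- explicit. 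Either proof is acceptable here.
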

\begin{proof}
Let $\psi$ be a $\mathcal{C}^1$ function on $\Omega$, we apply the definition of $G$ and Leibniz rule with respect to the partial derivatives and we get
\[ \begin{split}
 \displaystyle G(\overline{q}\psi)(q) &:= |\vec{q}\,|^2\partial_{x_0}(\overline{q}\psi)(q)+\vec{q}\,\sum_{l=1}^3x_l\partial_{x_l}(\overline{q}\psi)(q) \\
 &=|\vec{q}\,|^2\overline{q}\partial_{x_0}\psi(q)+|\vec{q}\,|^2\psi(q)+\vec{q}\, \textbf{ }\overline{q}\sum_{l=1}^3x_l\partial_{x_l}\psi(q) -\vec{q}\,\sum_{l=1}^3x_le_l\psi(q).\\
\end{split}
\]
However, we know that
$$\displaystyle \vec{q}\,=\sum_{l=1}^3x_le_l, \quad \vec{q}\,\textbf{ }\overline{q}=\overline{q}\vec{q}\, \quad \text{ and } \quad \vec{q}\,^2=-|\vec{q}\,|^2.$$
Thus, for any $q\in\Omega$ we have
\[ \begin{split}
 \displaystyle G(\overline{q}\psi)(q) &= \overline{q}\left(|\vec{q}\,|^2\partial_{x_0}\psi(q)+\vec{q}\,\sum_{l=1}^3x_l\partial_{x_l}\psi(q)\right)+2|\vec{q}\,|^2\psi(q)  \\
 &=\overline{q}G(\psi)(q)+2|\vec{q}\,|^2\psi(q) .\\
\end{split}
\]
This ends the proof.
\end{proof}
\begin{cor}\label{GVC}
Let $\Omega\subseteq\mathbb H$ be a domain and
$f:\Omega\longrightarrow \Hq$ be a slice hyperholomorphic function. Then,  we have
$$\displaystyle G(\overline{q}f)(q)=2|\vec{q}\,|^2f(q), \textbf{ }\forall q\in\Omega$$
and
\begin{equation}\label{Verre}
\displaystyle V(\overline{q}f)(q)=2f(q), \textbf{ }\forall q\in\Omega.
\end{equation}
\end{cor}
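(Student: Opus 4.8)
The plan is to deduce both identities directly from Lemma~\ref{Gqbar} together with the fact, recalled earlier from \cite{CGCS2013}, that slice hyperholomorphic functions lie in the kernel of $G$. First I would specialize Lemma~\ref{Gqbar} to the choice $\psi=f$, which gives
$$G(\overline{q}f)(q)=\overline{q}\,G(f)(q)+2|\vec{q}\,|^2f(q), \qquad \forall q=x_0+\vec{q}\,\in\Omega.$$
Since $f$ is slice hyperholomorphic it belongs to $\ker(G)$, so $G(f)\equiv 0$ and the first term vanishes, leaving the first identity $G(\overline{q}f)(q)=2|\vec{q}\,|^2f(q)$ at once.

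For the second identity I would invoke the relation between the two global operators from Remark~\ref{GVR}, namely $V(g)(q)=|\vec{q}\,|^{-2}G(g)(q)$ for all $q\in\Omega\setminus\R$, applied to $g=\overline{q}f$. Substituting the first identity and cancelling the common factor $|\vec{q}\,|^2$ yields $V(\overline{q}f)(q)=2f(q)$ on $\Omega\setminus\R$. Thus the bulk of the work is already carried by Lemma~\ref{Gqbar} and the kernel property, and no fresh computation is needed.

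The only genuinely delicate point is the behaviour on the real axis. The identity $V=|\vec{q}\,|^{-2}G$ holds only away from $\R$, so the argument above establishes $V(\overline{q}f)=2f$ merely on $\Omega\setminus\R$. Here I would appeal to the convention fixed in Remark~\ref{GVR}, under which $V(\overline{q}f)$ denotes the continuous extension of this expression to all of $\Omega$: since the right-hand side $2f$ is continuous on $\Omega$ and $\Omega\setminus\R$ is dense, that extension must coincide with $2f$ everywhere, giving \eqref{Verre}. I expect this continuity step, rather than any algebra, to be the part that must be handled with care, together with the implicit hypothesis that $\Omega$ be an axially symmetric slice domain, which is what makes $G(f)\equiv 0$ legitimate for slice hyperholomorphic $f$.
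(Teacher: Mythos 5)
Your proposal is correct and follows essentially the same route as the paper: apply Lemma~\ref{Gqbar} with $G(f)\equiv 0$, pass to $V$ via Remark~\ref{GVR} off the real axis, and conclude on all of $\Omega$ by the continuity/extension convention. Your added observation that the kernel property $G(f)\equiv 0$ really presupposes $\Omega$ to be an axially symmetric slice domain (as in \cite{CGCS2013}) is a fair point that the paper's terser proof leaves implicit.
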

\begin{proof}
The fact that $f$ is slice hyperholomorphic on $\Omega$ implies that $$G(f)(q)=0, \textbf{  } \forall q\in \Omega.$$
Hence, a direct application of Lemma \ref{Gqbar} gives \eqref{Verre} on $\Omega\setminus\mathbb{R}$. However, since the right hand side of \eqref{Verre} extends the left hand side to all of $\Omega$ as a slice hyperholomorphic function, then \eqref{Verre} holds on $\Omega$.
\end{proof}
\begin{ex} To provide an example, let us consider the particular case $f\in\mathcal{SP}_2(\Hq)$. Then, we have \begin{enumerate}
\item $\displaystyle V^2(f)(q)=0, \textit{ }\forall q\in\Hq.$
\item $\Delta V(f)$ is Cauchy-Fueter regular on $\Hq$.
\item $\overline{\mathcal{D}}V(f)$ is poly-Fueter regular of order $2$, where $\overline{\mathcal{D}}$ is the conjugate of the Cauchy-Fueter operator.
\end{enumerate}
To see that (1) holds, we use the poly-decomposition that asserts the existence of some unique functions $f_0,f_1\in \mathcal{SR}(\Hq)$ such that $$f(q)=f_0(q)+\overline{q}f_1(q), \forall q\in\Hq.$$
An application of corollary \ref{GVC} combined with the fact that slice hyperholomorphic functions belong to $\ker(V)$ show that (1) holds. The other two assertions follows similarly.
\end{ex}
\begin{prop} \label{GVAct}
Let $\Omega$ be an open set in $\Hq$ and $f:\Omega\longrightarrow \Hq$ a slice hyperholomorphic function. Let $n\geq 2$ and $1 \leq k\leq n-1$, then we have
\begin{enumerate}
\item $\displaystyle G(\overline{q}^kf)(q)=2k|\vec{q}\,|^2\overline{q}^{k-1}f(q), \textbf{ }\forall q\in\Omega.$
\item $\displaystyle V(\overline{q}^kf)(q)=2k\overline{q}^{k-1}f(q), \textbf{ }\forall q\in\Omega.$
\end{enumerate}
\end{prop}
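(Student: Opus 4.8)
The plan is to prove part (1) by induction on $k$, using Lemma \ref{Gqbar} as the engine of the inductive step, and then to deduce part (2) from part (1) via the relation between $G$ and $V$ recorded in Remark \ref{GVR}. Throughout, the fact that $f$ is slice hyperholomorphic enters only through $G(f)=0$, and the noncommutativity of $\Hq$ is tamed by the observation that $|\vec{q}\,|^2$ is a real scalar, hence central.

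For the base case $k=1$, the claimed identity $G(\overline{q}f)(q)=2|\vec{q}\,|^2\overline{q}^{0}f(q)=2|\vec{q}\,|^2f(q)$ is precisely Corollary \ref{GVC}. For the inductive step, I would assume $G(\overline{q}^kf)(q)=2k|\vec{q}\,|^2\overline{q}^{k-1}f(q)$ and apply Lemma \ref{Gqbar} to the $\mathcal{C}^1$ function $\psi:=\overline{q}^kf$, obtaining
$$G(\overline{q}^{k+1}f)(q)=\overline{q}\,G(\overline{q}^kf)(q)+2|\vec{q}\,|^2\overline{q}^kf(q).$$
Substituting the inductive hypothesis into the first term and pulling the real scalar $2k|\vec{q}\,|^2$ through $\overline{q}$ (this is the only place where centrality is used, and it is exactly why no term ever has to be commuted past $f$) yields $\overline{q}\,\bigl(2k|\vec{q}\,|^2\overline{q}^{k-1}f\bigr)=2k|\vec{q}\,|^2\overline{q}^kf$, so the two terms combine to $2(k+1)|\vec{q}\,|^2\overline{q}^kf(q)$, which is the statement for $k+1$. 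This closes the induction and proves part (1) on all of $\Omega$.

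For part (2), on $\Omega\setminus\R$ the identity $V=\tfrac{1}{|\vec{q}\,|^2}G$ from Remark \ref{GVR} gives
$$V(\overline{q}^kf)(q)=\frac{1}{|\vec{q}\,|^2}\,G(\overline{q}^kf)(q)=\frac{1}{|\vec{q}\,|^2}\cdot 2k|\vec{q}\,|^2\overline{q}^{k-1}f(q)=2k\overline{q}^{k-1}f(q),$$
where the factor $|\vec{q}\,|^2$ cancels cleanly. I do not expect any serious obstacle in this argument; the only point requiring care is the passage from $\Omega\setminus\R$ to all of $\Omega$, since $V$ is a priori defined only off the real axis. This is handled exactly as in the proof of Corollary \ref{GVC}: the right-hand side $2k\overline{q}^{k-1}f(q)$ is continuous (indeed slice polyanalytic) on the whole of $\Omega$, so it furnishes the continuous extension of $V(\overline{q}^kf)$ promised by the convention in Remark \ref{GVR}, and the formula therefore holds on all of $\Omega$.
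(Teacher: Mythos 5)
Your proposal is correct and follows essentially the same route as the paper: induction with base case given by Corollary \ref{GVC}, inductive step via Lemma \ref{Gqbar}, and part (2) deduced from part (1) through the relation $V=\tfrac{1}{|\vec{q}\,|^2}G$ of Remark \ref{GVR} together with continuous extension across the real axis. Your explicit remark that $|\vec{q}\,|^2$ is central is a nice clarification the paper leaves implicit, but the argument is the same.
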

\begin{proof}
Let $f\in\mathcal{SR}(\Omega)$ and $n\geq 2$. We reason by induction with respect to $n$.

(1) First, we note that the result holds for $n=2$ as a consequence of Corollary \ref{GVC}. Now, let $n\geq 2$ be such that we have
$$\displaystyle G(\overline{q}^kf)(q)=2k|\vec{q}\,|^2\overline{q}^{k-1}f(q), \textbf{ }\forall q\in\Omega, \forall 1 \leq k\leq n-1.$$
In order, to prove that the result holds for $n+1$, we only have to show that
\begin{equation}
\displaystyle G(\overline{q}^nf)(q)=2n|\vec{q}\,|^2\overline{q}^{n-1}f(q), \forall q\in \Omega.
\end{equation}
Indeed, we apply Lemma \ref{Gqbar} and obtain
\[ \begin{split}
 \displaystyle G(\overline{q}^n\psi)(q) &=G(\overline{q} \textbf{ } \overline{q}^{n-1}f)(q)  \\
 &=\overline{q}G(\overline{q}^{n-1}f)(q)+2|\vec{q}\,|^2\overline{q}^{n-1} f(q),  \forall q\in\Omega .\\
\end{split}
\]
However, by induction hypothesis we know that
$$\displaystyle G(\overline{q}^{n-1}f)(q)=2(n-1)|\vec{q}\,|^2\overline{q}^{n-2}f(q), \forall q\in \Omega.$$
Therefore, we get
\[ \begin{split}
 \displaystyle G(\overline{q}^nf)(q) &=2(n-1)|\vec{q}\,|^2\overline{q}^{n-1}f(q)+2|\vec{q}\,|^2\overline{q}^{n-1}f(q) \textbf{ }   \\
 &=2n|\vec{q}\,|^2\overline{q}^{n-1}f(q),  \forall q\in\Omega .\\
\end{split}
\]
Hence, the result holds by induction and this completes the proof.

(2) We know by Remark \ref{GVR} that $$V(f)(q)=\frac{1}{|\vec{q}\,|^2}G(f)(q), \forall q\in \Omega\setminus \R.$$

Then, since $f$ is a slice hyperholomorphic function on $\Omega$, the right hand side extends the left hand side as polyanalytic function of order $k$ and so we get
$$\displaystyle V(\overline{q}^kf)(q)=2k\overline{q}^{k-1}f(q), \textbf{ }\forall q\in\Omega, \ \  1 \leq k\leq n-1 .$$

\end{proof}
\begin{prop}\label{VO}
Let $\Omega$ be an axially symmetric slice domain in $\Hq$ and $f:\Omega\longrightarrow \Hq$ a slice polyanalytic function of order $n\geq 1$. Then, $V(f)$ is a slice polyanalytic function of order $n-1$ on $\Omega.$
\end{prop}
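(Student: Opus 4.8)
The plan is to reduce everything to the poly-decomposition of Proposition~\ref{carac3} combined with the explicit action of $V$ on monomials of the form $\overline{q}^k f_k$ already computed in Proposition~\ref{GVAct}. First I would use Proposition~\ref{carac3} to write, on the axially symmetric slice domain $\Omega$,
$$f(q)=\sum_{k=0}^{n-1}\overline{q}^k f_k(q),$$
for unique slice hyperholomorphic functions $f_0,\dots,f_{n-1}$. Since $V$ inherits right $\Hq$-linearity from $G$ (via Remark~\ref{GVR} and Proposition~\ref{GPRO}(2), as $V=|\vec{q}\,|^{-2}G$ on $\Omega\setminus\R$), applying $V$ termwise yields $V(f)(q)=\sum_{k=0}^{n-1}V(\overline{q}^k f_k)(q)$ on $\Omega\setminus\R$.

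Next I would evaluate each summand. The $k=0$ term vanishes because $f_0$ is slice hyperholomorphic, hence lies in $\ker(V)$. For $1\le k\le n-1$ I would invoke Proposition~\ref{GVAct}(2), giving $V(\overline{q}^k f_k)(q)=2k\,\overline{q}^{k-1}f_k(q)$, where \emph{crucially} that proposition already asserts the identity on all of $\Omega$, the right-hand side being the continuous extension. Summing and reindexing by $j=k-1$ produces
$$V(f)(q)=\sum_{j=0}^{n-2}\overline{q}^{j}\,g_j(q),\qquad g_j:=2(j+1)\,f_{j+1},$$
and each $g_j$ is again slice hyperholomorphic, being a real scalar multiple of a slice hyperholomorphic function.

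Finally I would appeal to Proposition~\ref{carac3} in the converse direction: the displayed formula is precisely a poly-decomposition of $V(f)$ into $n-1$ slice hyperholomorphic components, whence $V(f)$ is slice polyanalytic of order $n-1$ on $\Omega$. (In the degenerate case $n=1$ the sum is empty and $V(f)=0$, consistent with order $0$.) The one point deserving care, which I expect to be the main subtlety rather than a genuine obstacle, is the passage from $\Omega\setminus\R$ to all of $\Omega$: one must confirm that the termwise continuous extensions assemble into the continuous extension of $V(f)$ itself in the sense of Remark~\ref{GVR}. This is automatic here because each summand $\overline{q}^k f_k$ individually admits a continuous extension of $V$ to $\Omega$ by Proposition~\ref{GVAct}, so their finite linear combination does too, and that combination is exactly the slice polyanalytic function exhibited above.
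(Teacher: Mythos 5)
Your proposal is correct and follows essentially the same route as the paper's proof: poly-decomposition of $f$ via Proposition~\ref{carac3}, termwise application of Proposition~\ref{GVAct} to get $V(\overline{q}^k f_k)=2k\,\overline{q}^{k-1}f_k$, and reindexing to exhibit a poly-decomposition of $V(f)$ of length $n-1$. Your extra remark on assembling the continuous extensions across $\Omega\setminus\R$ is a welcome clarification of a point the paper handles implicitly, but it does not change the argument.
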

\begin{proof}
We note that $\Omega$ is a slice domain. So, by poly-decomposition there exist some unique slice regular functions $\varphi_0,...,\varphi_{n-1}$ such that we can write $$f(q)=\displaystyle\sum_{k=0}^{n-1}\overline{q}^k\varphi_k(q), \forall q\in \Omega.$$
Thus, by Proposition \ref{GVAct} we know that for all $q\in\Omega$ we have
\[ \begin{split}
 \displaystyle V(f)(q) &=\sum_{k=1}^{n-1}V(\overline{q}^k\varphi_k)(q)   \\
 &=2\sum_{k=1}^{n-1}k\overline{q}^{k-1}\varphi_k(q)\\
 &=\sum_{h=0}^{n-2}\overline{q}^{h}\zeta_{h}(q), \\
\end{split}
\]
where we have set $\zeta_h(q)=2(h+1)\varphi_{h+1}$, $\forall 0 \leq h \leq n-2$ which are slice hyperholomorphic functions on the whole $\Omega$ by hypothesis. Hence, $V(f)$ extends as a slice polyanalytic function of order $n-1$ on $\Omega$.
\end{proof}
\begin{thm} \label{Vn}
Let $\Omega$ be an axially symmetric slice domain in $\Hq$ and $f:\Omega\longrightarrow \Hq$ a slice polyanalytic function of order $n\geq 1$. Then, $f$ belongs to $\ker(V^n)$, i.e:
$$\displaystyle V^n(f)(q)=0, \textit{ }\forall q\in\Omega.$$
\end{thm}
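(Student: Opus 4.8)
The plan is to argue by induction on the order $n$, using Proposition \ref{VO} as the engine that lowers the polyanalytic order by one at each application of $V$. For the base case $n=1$, a slice polyanalytic function of order $1$ is exactly a slice hyperholomorphic function; by \cite{CGCS2013} such a function lies in $\ker(G)$ on the axially symmetric slice domain $\Omega$, and since $V(f)(q)=\frac{1}{|\vec{q}\,|^2}G(f)(q)$ on $\Omega\setminus\R$ by Remark \ref{GVR}, we get $V(f)=0$ on $\Omega\setminus\R$, which extends by zero to all of $\Omega$. Hence $V^1(f)=0$.

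For the inductive step I would suppose the statement holds for order $n-1$ and take $f$ slice polyanalytic of order $n$. By Proposition \ref{VO}, $V(f)$ is a genuine slice polyanalytic function of order $n-1$ on the \emph{whole} of $\Omega$; in particular its continuous extension across $\Omega\cap\R$ exists, which is precisely the hypothesis demanded by the inductive definition of the iterated operator in Remark \ref{GVR}. Applying the induction hypothesis to $V(f)$ then gives $V^{n-1}(V(f))=0$, that is $V^n(f)=0$ on $\Omega$, which closes the induction.

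The delicate point — and the reason Proposition \ref{VO} is isolated beforehand — is the repeated need to extend $V(\cdot)$ continuously across the real axis, since $V$ carries the singular factor $\frac{1}{|\vec{q}\,|^2}$ and is a priori only defined on $\Omega\setminus\R$. The induction is arranged so that at every stage the object to which $V$ is applied is again honestly slice polyanalytic on the full domain $\Omega$, so the extension hypothesis of Remark \ref{GVR} is automatically satisfied and the iteration never leaves the controlled class.

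Alternatively, one can bypass the induction through a single explicit computation: using the poly-decomposition $f(q)=\sum_{k=0}^{n-1}\overline{q}^k f_k(q)$ of Proposition \ref{carac3} with each $f_k$ slice hyperholomorphic, iterated use of Proposition \ref{GVAct} yields $V^m(\overline{q}^k f_k)=2^m\frac{k!}{(k-m)!}\,\overline{q}^{\,k-m}f_k$ for $m\leq k$, and one further application annihilates the surviving slice hyperholomorphic term, so $V^{k+1}(\overline{q}^k f_k)=0$. Since every index in the sum obeys $k\leq n-1<n$, each summand is killed by $V^n$, and therefore so is $f$. I expect the linearity of $V$ together with the clean multiplicative rule in Proposition \ref{GVAct} to make this second route entirely routine once the extension subtlety has been dispatched.
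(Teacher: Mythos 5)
Your proposal is correct and follows essentially the same route as the paper: the paper also iterates Proposition \ref{VO} to conclude that $V^{n-1}(f)$ is slice hyperholomorphic and then kills it with one more application of $V$, which is exactly your induction unrolled. Your alternative computation via the poly-decomposition and Proposition \ref{GVAct} is also sound, but it is essentially the content already packaged inside the proof of Proposition \ref{VO}.
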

\begin{proof}
We apply Proposition \ref{VO} iteratively and obtain
 $$V(f)\in\mathcal{SP}_{n-1}(\Omega), V^2(f)\in\mathcal{SP}_{n-2}(\Omega),...,V^{n-1}(f)\in\mathcal{SP}_{1}(\Omega)=\mathcal{SR}(\Omega). $$
In particular, we deduce that $V^{n-1}(f)$ is a slice hyperholomorphic function on $\Omega$. Therefore, it belongs to the kernel of the global operator $V$ outside the real line. Hence, since $V^{n-1}(f)$ admits a continuous extension to the whole $\Omega$, by Theorem 2.4 in \cite{P2019}, we conclude that
$$\displaystyle V^n(f)(q)=V(V^{n-1})(f)(q)=0, \textit{ }\forall q\in\Omega.$$
This ends the proof.
\end{proof}
\begin{thm}[Poly-Fueter mapping theorem I]
Let $\Omega$ be an axially symmetric slice domain in $\Hq$ and $f:\Omega\longrightarrow \Hq$ a slice polyanalytic function of order $n\geq 1$. Then the function given by
$$\displaystyle \tau_n(f)(q)=\Delta \circ V^{n-1}(f)(q), \textit{ }\forall q\in\Omega$$
belongs to the kernel of the Cauchy-Fueter operator $\mathcal{D}$.
\end{thm}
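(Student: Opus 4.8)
The plan is to reduce the statement to the classical Fueter mapping theorem (Theorem \ref{FMthm}) by exploiting the fact that applying $V$ lowers the polyanalytic order by exactly one. The central observation is that $\tau_n=\tau\circ V^{n-1}$, where $\tau=\Delta$ is the ordinary Fueter map, and that $V^{n-1}(f)$ is already slice hyperholomorphic; once this is established, the regularity of $\Delta V^{n-1}(f)$ follows immediately from the $n=1$ theory, with no new computation needed.

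First I would iterate Proposition \ref{VO}. Since $f$ is slice polyanalytic of order $n$ on the axially symmetric slice domain $\Omega$, that proposition gives $V(f)\in\mathcal{SP}_{n-1}(\Omega)$, and applying it repeatedly yields
$$V(f)\in\mathcal{SP}_{n-1}(\Omega),\quad V^2(f)\in\mathcal{SP}_{n-2}(\Omega),\quad\dots,\quad V^{n-1}(f)\in\mathcal{SP}_{1}(\Omega)=\mathcal{SR}(\Omega),$$
exactly as in the proof of Theorem \ref{Vn}. Thus $g:=V^{n-1}(f)$ is a slice hyperholomorphic function defined on the whole of $\Omega$, the continuous extension across $\R$ being furnished at each step by the poly-decomposition used in Proposition \ref{VO}.

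Next, since $\Omega$ is an axially symmetric slice domain, Remark \ref{FMrem} guarantees that $g$ admits the representation $g(x+Iy)=\alpha(x,y)+I\beta(x,y)$ with $\alpha,\beta$ quaternionic-valued, $\alpha(x,-y)=\alpha(x,y)$, $\beta(x,-y)=-\beta(x,y)$, and satisfying the Cauchy–Riemann system. Theorem \ref{FMthm} then applies verbatim to $g$ and shows that $\tau(g)=\Delta g$ extends to a Fueter regular function on $\Omega$. Since $\tau_n(f)=\Delta\circ V^{n-1}(f)=\Delta g=\tau(g)$, we conclude that $\mathcal{D}(\tau_n(f))=0$ on $\Omega$, as desired. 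The base case $n=1$ is recorded separately as the trivial case $V^0=\mathrm{id}$, where the statement is precisely the classical Fueter mapping theorem.

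I expect the only genuine subtlety to be the bookkeeping of domains and extensions rather than any substantive difficulty: one must ensure that $V^{n-1}(f)$ really is a bona fide slice hyperholomorphic function on the \emph{full} domain $\Omega$, including the real axis, and smooth enough for $\Delta$ to act. This is not an obstacle in substance, because Proposition \ref{VO} already produces at each application of $V$ a slice polyanalytic function whose continuous extension across $\R$ is provided by the poly-decomposition, and slice hyperholomorphic functions are real-analytic, so $\Delta g$ is well defined. In other words, essentially all the work has been front-loaded into Proposition \ref{VO} and Theorem \ref{Vn}, and this theorem is the clean corollary that packages them together with the $n=1$ Fueter map.
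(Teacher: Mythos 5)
Your proposal is correct and follows the paper's own argument exactly: iterate Proposition \ref{VO} (as in the proof of Theorem \ref{Vn}) to see that $V^{n-1}(f)$ is slice hyperholomorphic on $\Omega$, then invoke Remark \ref{FMrem} and Theorem \ref{FMthm} to conclude that $\Delta V^{n-1}(f)$ is Cauchy--Fueter regular. Your additional remarks on the extension across $\R$ and the real-analyticity needed for $\Delta$ to act are sensible bookkeeping that the paper leaves implicit.
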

\begin{proof}
Using the same argument used to prove Theorem \ref{Vn}, we deduce that $V^{n-1}(f)$ is a slice hyperholomorphic function on $\Omega$. Therefore, since $\Omega$ is an axially symmetric slice domain we can use Theorem \ref{FMthm} and Remark \ref{FMrem} to conclude that the function $\tau_n(f)$
is in the kernel of the Cauchy-Fueter operator $\mathcal{D}$ on $\Omega$, i.e.,

$$\displaystyle \mathcal{D} \circ \tau_n(f)(q)=\mathcal{D}\circ \Delta \circ V^{n-1}(f)(q)=0, \textit{ }\forall q\in\Omega.$$
\end{proof}
\begin{rem}
We note that the poly-Fueter mapping $$\tau_n:=\Delta \circ V^{n-1}$$ takes the space of slice polyanalytic functions of order $n\geq 1$ into the space of Cauchy-Fueter regular functions $\mathcal{FR}(\Omega)$.
\end{rem}
\begin{thm}\label{APP}
Let $\Omega$ be an axially symmetric slice domain of $\Hq$ and $f:\Omega\longrightarrow \Hq$ a slice hyperholomorphic function. Let $n\geq 1$ and consider the functions defined by
$$\Psi_{f}^k(q):=\overline{q}^kf(q), \textbf{  } \forall q\in \Omega, \forall 0 \leq k \leq n-1.$$

Then, the family $\lbrace{\Psi_{f}^k}\rbrace_{ 0 \leq k \leq n}$ forms an Appell system with respect to the operator $\frac{1}{2}V$, namely

$$\displaystyle \frac{1}{2}V(\Psi_{f}^0)=0 \text{ and } \frac{1}{2}V(\Psi_{f}^k)=k\Psi_{f}^{k-1},  \forall 1 \leq k \leq n-1.$$
\end{thm}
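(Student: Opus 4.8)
The plan is to recognize both relations as immediate reformulations of results already established earlier in this section, so that essentially no new computation is needed; the only work is to rewrite Corollary \ref{GVC} and Proposition \ref{GVAct} in terms of the normalized operator $\frac{1}{2}V$ and the functions $\Psi_f^k$.

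First I would treat the base case $k=0$. Since $\Psi_f^0(q)=\overline{q}^{\,0}f(q)=f(q)$ and $f$ is slice hyperholomorphic on the axially symmetric slice domain $\Omega$, it was recalled (following \cite{CGCS2013}) that $f\in\ker(G)$, that is $G(f)(q)=0$ for all $q\in\Omega$. By the relation $V(f)(q)=\frac{1}{|\vec{q}\,|^2}G(f)(q)$ of Remark \ref{GVR}, we get $V(f)=0$ on $\Omega\setminus\R$, and since the zero function trivially extends $V(f)$ to all of $\Omega$, it follows that $V(\Psi_f^0)=0$, hence $\frac{1}{2}V(\Psi_f^0)=0$ on $\Omega$, which is the first claimed identity.

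Next I would handle the general step $1\le k\le n-1$. Here the functions $\Psi_f^k(q)=\overline{q}^kf(q)$ are exactly those appearing in Proposition \ref{GVAct}(2), which gives $V(\overline{q}^kf)(q)=2k\,\overline{q}^{k-1}f(q)$ for all $q\in\Omega$ (the case $k=1$ being Corollary \ref{GVC}). Dividing by $2$ and recognizing $\overline{q}^{k-1}f=\Psi_f^{k-1}$ yields precisely $\frac{1}{2}V(\Psi_f^k)=k\,\Psi_f^{k-1}$, the desired Appell recurrence. Together with the base case this shows that $\{\Psi_f^k\}$ is an Appell system for $\frac{1}{2}V$.

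Because the two cited statements already incorporate the continuous-extension argument from $\Omega\setminus\R$ to the whole of $\Omega$ (built into the inductive proof of Proposition \ref{GVAct} and the convention of Remark \ref{GVR}), there is in effect no genuine obstacle here; the only points requiring care are the bookkeeping of the index $k$ together with the normalizing factor $\tfrac12$, and noting that $\Psi_f^0=f$ so that the base relation is nothing but the membership of slice hyperholomorphic functions in $\ker V$.
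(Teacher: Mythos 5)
Your proposal is correct and follows essentially the same route as the paper's own proof: the base case is the membership of slice hyperholomorphic functions in $\ker V$, and the recurrence for $1\le k\le n-1$ is a direct restatement of Proposition \ref{GVAct}(2) (with the extension to all of $\Omega$ handled exactly as the paper does via Remark \ref{GVR} and the extension result of \cite{P2019}). No gaps; the normalization by $\tfrac12$ and the identification $\overline{q}^{k-1}f=\Psi_f^{k-1}$ are the only bookkeeping steps, just as in the paper.
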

\begin{proof}
 The function $\Psi^0_f=f$ is slice hyperholomorphic on $\Omega$. So, $f$ belongs to the kernel of the global operator $V$ on $\Omega$. Thus, we have $\displaystyle \frac{1}{2}V(\Psi_{f}^0)=0$. On the other hand, we know by  Proposition \ref{GVAct} that

$$\displaystyle V(\overline{q}^kf)(q)=2k\overline{q}^{k-1}f(q), \textbf{ }\forall q\in\Omega, 1 \leq k\leq n-1 .$$

 Therefore, this combined with Theorem 2.4 in \cite{P2019} allows to see that for all $q\in\Omega$ and $1 \leq k\leq n-1 $ we have
\[ \begin{split}
 \displaystyle \frac{1}{2}V(\Psi_f^k)(q) &=\frac{1}{2}V(\overline{q}^kf)(q)   \\
 &=k\overline{q}^{k-1}f(q)\\
 &=k\Psi_f^{k-1}(q). \\
\end{split}
\]
 This ends the proof.
\end{proof}
\begin{cor}
 The sequence $\lbrace{\overline{q}^k}\rbrace_{k\geq 0}$ is an Appell system with respect to $\frac{1}{2}V$.
\end{cor}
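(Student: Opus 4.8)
The plan is to recognize this statement as the special case $f \equiv 1$ of Theorem \ref{APP}. The constant function $f(q)=1$ is slice hyperholomorphic on every axially symmetric slice domain; in particular we may take $\Omega=\Hq$, where $1$ admits the trivial series expansion $\sum_{n\geq 0}q^n a_n$ with $a_0=1$ and $a_n=0$ for $n\geq 1$, and hence lies in $\mathcal{SR}(\Hq)\subset\ker(V)$. With this choice the auxiliary functions of Theorem \ref{APP} collapse to the monomials themselves, namely $\Psi_1^k(q)=\overline{q}^k f(q)=\overline{q}^k$ for every $k\geq 0$.

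Granting this, the Appell relations are immediate. First, $\tfrac{1}{2}V(\Psi_1^0)=\tfrac{1}{2}V(1)=0$ since $1$ is slice hyperholomorphic and therefore annihilated by $V$. Second, for $k\geq 1$ the relation $\tfrac{1}{2}V(\Psi_1^k)=k\,\Psi_1^{k-1}$ of Theorem \ref{APP} reads precisely
$$\frac{1}{2}V(\overline{q}^k)(q)=k\,\overline{q}^{k-1}, \quad \forall q\in\Hq,$$
which is exactly the defining property of an Appell system for the operator $\tfrac{1}{2}V$, with $\overline{q}^k$ playing the role of the monomial $x^k$ and $\tfrac{1}{2}V$ playing the role of the ordinary derivative. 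Since Theorem \ref{APP} holds for every $n\geq 1$ and the index range $0\leq k\leq n-1$ exhausts all of $\N$ as $n\to\infty$, the relations hold simultaneously for the entire sequence $\{\overline{q}^k\}_{k\geq 0}$, which is the claim.

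There is essentially no obstacle here: the only point requiring a word of justification is that $f\equiv 1$ genuinely satisfies the hypotheses of Theorem \ref{APP}, i.e.\ that it is slice hyperholomorphic so that $\Psi_1^0=1\in\ker(V)$, and this is clear. Alternatively, one could bypass Theorem \ref{APP} altogether and read the result directly off Proposition \ref{GVAct}, whose identity $V(\overline{q}^k f)(q)=2k\,\overline{q}^{k-1}f(q)$ with $f\equiv 1$ yields $V(\overline{q}^k)=2k\,\overline{q}^{k-1}$, and hence the same conclusion after dividing by $2$. Either route makes this a one-line consequence of the machinery already established.
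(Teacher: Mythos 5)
Your proof is correct and is exactly the paper's argument: the corollary is obtained by specializing Theorem \ref{APP} to the constant function $f\equiv 1$. The extra verifications you supply (that $1$ is slice hyperholomorphic, and the alternative route via Proposition \ref{GVAct}) are fine but not needed beyond the one-line observation the paper itself makes.
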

\begin{proof}
If we take the constant function $f=1$, we immediately obtain the result.
\end{proof}
\begin{rem}
We note that for any slice hyperholomorphic function $f$ the family $\lbrace{\Psi_{f}^k}\rbrace_{ 0 \leq k \leq n}$ considered in Theorem \ref{APP} form also an Appell system with respect to the Cauchy-Riemann operator $\displaystyle \frac{1}{2}\overline{\partial_I}$ for all $I\in\mathbb{S}$.
\end{rem}

The next result allows to construct poly-Fueter regular functions starting from slice polyanalytic ones of the same order:
\begin{thm}[Poly-Fueter mapping theorem II]
Let $\Omega\subseteq\mathbb H$ be an axially symmetric slice domain and let $f:\Omega\longrightarrow \Hq$ a slice polyanalytic function of order $n\geq 1$. Assume that $f$ admits the decomposition

$$\displaystyle f(q)=\sum_{k=0}^{n-1}\overline{q}^kf_k(q), \forall q\in\Omega$$ where $f_0,...,f_{n-1}\in \mathcal{SR}(\Omega).$
Then, the function defined by
\begin{equation}
\displaystyle \mathcal{C}_n(f)(q)=\sum_{k=0}^{n-1}x_0^k\Delta(f_k)(q), \forall q\in \Omega
\end{equation}

is a poly-Fueter regular function of order $n$.
\end{thm}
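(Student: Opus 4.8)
The plan is to observe that the target expression
$\mathcal{C}_n(f)(q)=\sum_{k=0}^{n-1}x_0^k\Delta(f_k)(q)$ is already written in exactly the form of the poly-Fueter decomposition of Proposition \ref{Poly-Fueter-Dec}, namely $\sum_{k=0}^{n-1}x_0^k\phi_k(q)$ with $\phi_k:=\Delta(f_k)$. Thus, modulo checking that each candidate component $\phi_k$ is genuinely Fueter regular, the conclusion is immediate from that proposition, and no genuine computation is needed.

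First I would fix $k$ with $0\leq k\leq n-1$ and show that $\phi_k=\Delta(f_k)\in\mathcal{FR}(\Omega)$. By hypothesis $f_k$ is slice hyperholomorphic on the axially symmetric slice domain $\Omega$, so by the Representation formula recalled in Remark \ref{FMrem} it is a slice function of the form $f_k(x+Iy)=\alpha_k(x,y)+I\beta_k(x,y)$ with $\alpha_k(x,-y)=\alpha_k(x,y)$, $\beta_k(x,-y)=-\beta_k(x,y)$, satisfying the Cauchy--Riemann system. Hence the Fueter mapping theorem (Theorem \ref{FMthm}) applies to each $f_k$ and shows that $\tau(f_k)=\Delta(f_k)$ extends to a Fueter regular function on the whole $\Omega$, i.e. $\mathcal{D}(\Delta(f_k))=0$ on $\Omega$.

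Having established $\phi_k\in\mathcal{FR}(\Omega)$ for every $k$, I would conclude by invoking the converse implication of Proposition \ref{Poly-Fueter-Dec}, namely that any function of the shape $\sum_{k=0}^{n-1}x_0^k\phi_k$ built from Fueter regular $\phi_0,\dots,\phi_{n-1}$ is poly-Fueter regular of order $n$. Since $\mathcal{C}_n(f)=\sum_{k=0}^{n-1}x_0^k\phi_k$ is precisely of this shape, we get $\mathcal{C}_n(f)\in\mathcal{FR}_n(\Omega)$, as claimed.

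There is essentially no analytic obstacle here; the content is packaged entirely in the two cited results, and the only thing to verify is the compatibility of the $\overline{q}^k$-decomposition in the hypothesis with the $x_0^k$-decomposition governing poly-Fueter regularity. If a self-contained argument is preferred over citing Proposition \ref{Poly-Fueter-Dec}, I would instead check $\mathcal{D}^n(\mathcal{C}_n(f))=0$ directly: applying the Leibniz rule for the first-order operator $\mathcal{D}$ together with $\mathcal{D}(\phi_k)=0$ gives $\mathcal{D}(x_0^k\phi_k)=k\,x_0^{k-1}\phi_k$, whence $\mathcal{D}^j(x_0^k\phi_k)=\frac{k!}{(k-j)!}\,x_0^{k-j}\phi_k$ for $j\leq k$ and $\mathcal{D}^{k+1}(x_0^k\phi_k)=0$. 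Since each exponent satisfies $k\leq n-1$, applying $\mathcal{D}$ a total of $n$ times annihilates every summand, so $\mathcal{D}^n(\mathcal{C}_n(f))=0$. The only mild bookkeeping point in this variant is that the scalar factor $k$ is a real integer and therefore commutes past everything, so no ordering issues arise.
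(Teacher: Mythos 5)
Your proof is correct and follows essentially the same route as the paper: apply Theorem \ref{FMthm} together with Remark \ref{FMrem} to see that each $\phi_k=\Delta(f_k)$ is Fueter regular, then invoke the decomposition of Proposition \ref{Poly-Fueter-Dec} to conclude. Your optional direct verification via $\mathcal{D}(x_0^k\phi_k)=k\,x_0^{k-1}\phi_k$ is also sound and matches the computation the paper itself uses later in the proof of Theorem \ref{TauC}.
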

\begin{proof}
We note by Theorem \ref{FMthm} and Remark \ref{FMrem} that the functions $\phi_k=\Delta(f_k)$ are all Cauchy-Fueter regular on $\Omega$ for any $0 \leq k\leq n-1$. Hence, thanks to Proposition \ref{Poly-Fueter-Dec} we conclude that $\mathcal{C}_n(f)$ is poly-Fueter regular of order $n$.

\end{proof}

Let $n\geq 1$, the two poly-Fueter mappings $\tau_n$ and $\mathcal{C}_n$ can be related to each other so that we have $$\displaystyle \tau_n:=(2\mathcal{D})^{n-1}\circ \mathcal{C}_n,$$
in other words the  diagram
 \[
\xymatrix{
\mathcal{SP}_n \ar[r]^{\tau_n} \ar[d]_{\mathcal{C}_n}& \mathcal{FR}\\
\mathcal{FR}_n\ar[ru]_{(2\mathcal{D})^{n-1}} }
\]
is commutative.

The proof of this fact is contained in the next result:
\begin{thm}\label{TauC}
Let $f:\Omega\longrightarrow \Hq$ be a slice polyanalytic function of order $n\geq 1$ on some axially symmetric slice domain. Then, we have

$$\displaystyle \mathcal{D}^{n-1}\mathcal{C}_n(f)(q)=\frac{1}{2^{n-1}}\tau_n(f)(q), \forall q\in \Omega.$$

\end{thm}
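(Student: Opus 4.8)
The plan is to compute both sides explicitly and show that each collapses to a single term involving only the top-order component $f_{n-1}$ of the poly-decomposition. The structural observation driving everything is that both $\tfrac{1}{2}V$ and $\mathcal{D}$ behave as lowering operators: each annihilates the slice hyperholomorphic (resp. Fueter regular) factor of a summand and merely differentiates the scalar power prefactor. Applying either operator $n-1$ times therefore isolates exactly the top term.

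First I would evaluate $\tau_n(f) = \Delta \circ V^{n-1}(f)$. Writing $f(q) = \sum_{k=0}^{n-1}\overline{q}^k f_k(q)$ with $f_k \in \mathcal{SR}(\Omega)$ and invoking the Appell property of Theorem \ref{APP} (equivalently Proposition \ref{GVAct}), the operator $\tfrac{1}{2}V$ sends $\overline{q}^k f_k$ to $k\,\overline{q}^{k-1}f_k$, exactly as $d/d\overline{q}$ acts on $\overline{q}^k$ while keeping the hyperholomorphic factor $f_k$ in its kernel. Iterating $n-1$ times gives $(\tfrac{1}{2}V)^{n-1}(\overline{q}^k f_k) = k(k-1)\cdots(k-n+2)\,\overline{q}^{\,k-n+1}f_k$. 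Since $0 \le k \le n-1$, this falling-factorial coefficient vanishes for every $k < n-1$ and equals $(n-1)!$ only when $k = n-1$. Hence $V^{n-1}(f) = 2^{n-1}(n-1)!\,f_{n-1}$, and applying $\Delta$ yields $\tau_n(f) = 2^{n-1}(n-1)!\,\Delta(f_{n-1})$.

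Next I would evaluate $\mathcal{D}^{n-1}\mathcal{C}_n(f)$. Set $\phi_k := \Delta(f_k)$, which is Fueter regular by Theorem \ref{FMthm} and Remark \ref{FMrem}, so $\mathcal{D}\phi_k = 0$. Because $x_0^k$ is real-valued and hence central, the Leibniz rule gives $\mathcal{D}(x_0^k \phi_k) = (\partial_{x_0}x_0^k)\phi_k + x_0^k\,\mathcal{D}\phi_k = k\,x_0^{k-1}\phi_k$, where the vector part $\sum_{l=1}^3 e_l\partial_{x_l}$ contributes nothing since it only acts on $\phi_k$ and recombines with $\partial_{x_0}\phi_k$ into $\mathcal{D}\phi_k = 0$. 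This is the precise analog of the lowering behaviour of $\tfrac{1}{2}V$, now with $x_0^k$ playing the role of $\overline{q}^k$. Iterating $n-1$ times leaves only the $k = n-1$ term, so $\mathcal{D}^{n-1}\mathcal{C}_n(f) = (n-1)!\,\phi_{n-1} = (n-1)!\,\Delta(f_{n-1})$.

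Comparing the two expressions gives $\mathcal{D}^{n-1}\mathcal{C}_n(f) = 2^{-(n-1)}\,\tau_n(f)$, which is the assertion. I expect the only genuine care to lie in the bookkeeping of the powers of $2$ (arising from the normalisation of $V$ versus $\tfrac{1}{2}V$) and in justifying the $V$-computation on all of $\Omega$, including the real axis, where one appeals to the continuous-extension conventions already fixed in Remark \ref{GVR} and Proposition \ref{GVAct}. There is no delicate estimate: the heart of the proof is simply that both operators strip one power off the prefactor and kill the regular factor, so that $n-1$ applications single out the top component $f_{n-1}$.
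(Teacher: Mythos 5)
Your proposal is correct and follows essentially the same route as the paper's proof: both sides are computed via the poly-decomposition, using Proposition \ref{GVAct} to show that $V$ acts as a lowering operator on the $\overline{q}^k f_k$ terms (so that $V^{n-1}(f)=2^{n-1}(n-1)!\,f_{n-1}$) and the Leibniz rule together with $\mathcal{D}\Delta f_k=0$ to show that $\mathcal{D}$ lowers the $x_0^k$ prefactors (so that $\mathcal{D}^{n-1}\mathcal{C}_n(f)=(n-1)!\,\Delta f_{n-1}$). The bookkeeping of the factor $2^{n-1}$ and the appeal to the continuous-extension convention match the paper's argument.
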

\begin{proof}
 Since $\Omega$ is a slice domain, by the poly-decomposition for slice polyanalytic functions there exist $f_0,..,f_{n-1}\in \mathcal{SR}(\Omega)$ such that  $$f(q)=\displaystyle \sum_{k=0}^{n-1}\overline{q}^kf_k(q), \forall q\in \Omega.$$
Thus, by Proposition \ref{GVAct} gives  $$\displaystyle V(f)(q)=\sum_{k=1}^{n-1} 2k\overline{q}^{k-1}f_k(q), \forall q\in \Omega.$$
In a similar way, we apply $(n-1)$ times the global operator $V$  and use Proposition \ref{GVAct} to get
$$
\displaystyle V^{n-1}(f)(q)=2^{n-1}(n-1)!f_{n-1}(q), \textbf{  } \forall q\in \Omega .$$
As a direct consequence, by definition of $\tau_n$ we have

\begin{equation}\label{taunf}
\displaystyle \tau_n (f)(q)=2^{n-1}(n-1)!\Delta f_{n-1}(q), \textbf{  } \forall q\in \Omega.
\end{equation}
On the other hand, since $(f_k)_{0 \leq k\leq n-1}$ are all slice hyperholomorphic we know by the Fueter mapping theorem that $$\mathcal{D}(\Delta f_k)=0, \textbf{ }\forall 0 \leq k\leq n-1.$$
Therefore, by Leibniz rule for the Cauchy-Fueter operator we have

\begin{equation}\label{Dact}
\displaystyle \mathcal{D}(x_0^k\Delta f_k)(q)=kx_{0}^{k-1}\Delta f_k(q); \textbf{  } \forall q\in \Omega,  \forall 0 \leq k\leq n-1.
\end{equation}
We know by definition of $\mathcal{C}_n$ that
$$\displaystyle \mathcal{C}_n(f)(q)=\sum_{k=0}^{n-1}x_0^k\Delta(f_k)(q), \forall q\in \Omega.$$
Thus, we use \eqref{Dact} and get

$$\displaystyle \mathcal{D}[\mathcal{C}_n(f)](q)=\sum_{k=1}^{n-1}kx_{0}^{k-1}\Delta f_k(q), \forall q\in \Omega.$$
Similarly, if we apply the Cauchy-Fueter operator $(n-1)$ times and use \eqref{Dact}, with some computations we get
\begin{equation}\label{DCn}
\displaystyle \mathcal{D}^{n-1} [\mathcal{C}_n(f)](q)=(n-1)!\Delta f_{n-1}(q), \forall q\in \Omega.
\end{equation}
Finally, we combine the relations \eqref{taunf} and \eqref{DCn} to conclude that $$\displaystyle \mathcal{D}^{n-1}\mathcal{C}_n(f)(q)=\frac{1}{2^{n-1}}\tau_n(f)(q), \forall q\in \Omega.$$
\end{proof}
\section{The Poly-Fueter mapping theorem in integral form}
In this section,  we prove a  Cauchy integral theorem and Cauchy formula for slice polyanaytic functions. Then, we study some integral representation of the poly-Fueter mapping theorem on the quaternionic unit ball that will extend the results obtained in \cite{ColomboSabadiniSommen2010}.\\
We recall the polyanalytic Cauchy formula in complex analysis, see Theorem 2.1 in \cite{DB1978}.
\begin{thm}
For $k \geq 1$, we set $$\psi_k(z)=\displaystyle \frac{1}{2\pi i} \frac{\bar{z}}{|z|^2}\frac{Re(z)^{k-1}}{(k-1)!}.$$ For $z=x+iy$, set $d \sigma=dx \wedge dy$. If $f$ is polyanalytic of order $n$, then for all $z\in\mathbb{D}$ we have  $$f(z)=\displaystyle \int_{\partial \mathbb{D}}\sum _{j=0}^{n-1}(-2)^j\psi_{j+1}(u-z)\frac{\partial^j}{\partial \bar{u}^j}f(u)d\sigma. $$
\end{thm}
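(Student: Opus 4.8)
The plan is to obtain the stated identity as the specialization to polyanalytic $f$ of an iterated Borel--Pompeiu (Cauchy--Green) representation, established by induction on the order $n$. First I would observe that since $w\bar w=|w|^2$ one has $\dfrac{\bar w}{|w|^2}=\dfrac1w$, so that $\psi_1(u-z)=\dfrac{1}{2\pi i\,(u-z)}$ is exactly the classical Cauchy kernel and the $j=0$ summand is the ordinary Cauchy integral of $f$. The classical Borel--Pompeiu formula then reads
\[
f(z)=\int_{\partial\mathbb D}\psi_1(u-z)f(u)\,du-\frac1\pi\int_{\mathbb D}\frac{\partial_{\bar u}f(u)}{u-z}\,d\sigma_u ,
\]
so the whole problem reduces to expanding the solid integral of $\partial_{\bar u}f$ into the higher-order boundary terms ($j\ge 1$) plus a remainder solid integral involving $\partial_{\bar u}^{\,n}f$, which will disappear precisely because $f$ is polyanalytic of order $n$.

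The key computational ingredient is a recursion for the kernels. For $w\neq 0$ the factor $1/w$ is holomorphic, so $\partial_{\bar w}(1/w)=0$ there, while $\partial_{\bar w}(\operatorname{Re}w)^{j}=\tfrac{j}{2}(\operatorname{Re}w)^{j-1}$ because $\operatorname{Re}w=\tfrac12(w+\bar w)$; combining these I get
\[
\partial_{\bar w}\psi_{j+1}(w)=\tfrac12\,\psi_j(w),\qquad j\ge 1,\quad w\neq 0,
\]
together with the fundamental-solution relation stating that $\partial_{\bar w}\psi_1(w)$ is a point mass at $w=0$. This recursion is exactly what allows each integration by parts to trade one antiholomorphic derivative on $f$ for one on the kernel, raising the index of $\psi$ by one and producing the constant $2$, with a sign from integration by parts, hence the factor $-2$ at every step.

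With this in hand I would prove by induction on $n$ the higher-order representation
\[
f(z)=\sum_{j=0}^{n-1}(-2)^j\int_{\partial\mathbb D}\psi_{j+1}(u-z)\,\partial_{\bar u}^{\,j}f(u)\,du+(-2)^n\int_{\mathbb D}\psi_n(u-z)\,\partial_{\bar u}^{\,n}f(u)\,d\sigma_u .
\]
The base case $n=1$ is the Borel--Pompeiu formula above. For the inductive step I would write $\psi_n=2\,\partial_{\bar u}\psi_{n+1}$ in the remainder solid integral and apply Green's/Stokes' theorem (equivalently Borel--Pompeiu once more) to move $\partial_{\bar u}$ onto $\partial_{\bar u}^{\,n}f$; this splits off a new boundary integral carrying the kernel $\psi_{n+1}$ and a new remainder with $\partial_{\bar u}^{\,n+1}f$, which is the statement for $n+1$. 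Specializing at the end to $f$ polyanalytic of order $n$ makes $\partial_{\bar u}^{\,n}f\equiv0$, so the remainder solid integral vanishes and only the displayed boundary sum survives, which is the assertion.

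The main obstacle is the careful handling of the singularity at $u=z$ throughout the induction: each integration by parts must be carried out on $\mathbb D\setminus B(z,\varepsilon)$ and one must control the circle integral over $\partial B(z,\varepsilon)$ as $\varepsilon\to0$. At the lowest level this boundary contribution is what reproduces $f(z)$ through the fundamental-solution property of $\psi_1$, whereas at every higher level it must be shown to vanish, which it does because $\psi_{j+1}$ is only mildly (integrably) singular at the origin for $j\ge1$. The rest is bookkeeping: keeping track of the signs and of the constant $(-2)^j$, and fixing the correct boundary $1$-form so that the normalization of $\psi_1$ as the Cauchy kernel is respected. A conceptually equivalent alternative would be to insert the poly-decomposition $f(u)=\sum_{k=0}^{n-1}\bar u^{\,k}f_k(u)$ with $f_k$ holomorphic, evaluate each antiholomorphic derivative on the boundary, and resum via the classical Cauchy formula, but the iterated Borel--Pompeiu route keeps the kernels $\psi_{j+1}$ explicit and is far less computational.
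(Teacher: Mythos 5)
Your approach is sound, but note first that the paper offers no proof of this statement at all: it is recalled verbatim from Theorem~2.1 of Brackx--Delanghe \cite{DB1978}, so there is nothing internal to compare against. Your iterated Borel--Pompeiu argument is the standard and correct way to establish it, and the two key computations check out: $\bar w/|w|^2=1/w$ makes $\psi_1$ the Cauchy kernel, and $\partial_{\bar w}\psi_{j+1}=\tfrac12\psi_j$ for $j\ge 1$ (with no distributional point mass, since $|\mathrm{Re}\,w|\le |w|$ makes $\psi_{j+1}$ bounded near the origin for $j\ge 1$, which is also exactly why the small-circle contributions vanish for $j\ge 1$ while reproducing $f(z)$ at $j=0$). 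One bookkeeping slip: your displayed induction hypothesis is internally inconsistent in the remainder term. Writing Stokes as $\int_{\mathbb D}\partial_{\bar u}g\,d\sigma=\frac{1}{2i}\int_{\partial\mathbb D}g\,du$, the base case forces the solid remainder to carry the constant $-2i$ (since $-\frac1\pi\cdot\frac{1}{u-z}=-2i\,\psi_1(u-z)$), and the correct recursion gives remainder coefficients $(-2)^n i$ rather than $(-2)^n$, while each integration by parts contributes $\frac{1}{i}$ to the new boundary term so that the boundary coefficients do come out as $(-2)^j$ as you claim. Since the remainder is discarded at the end (because $\partial_{\bar u}^{\,n}f\equiv 0$), this does not affect the final identity, but as written the intermediate formula is off by a factor of $i$ in its last term and the induction would not literally close. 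Finally, you are right to write the boundary integral against $du$: the paper's $d\sigma=dx\wedge dy$ on $\partial\mathbb D$ is a notational artifact inherited from the Clifford-analysis source, and your normalization is the one that makes the $j=0$ term the classical Cauchy integral.
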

First, we prove a version of the Cauchy's integral theorem for slice polyanalytic functions
\begin{thm}
Let $f$ and $g$ be a left and right slice polyanalytic functions of order $n$ respectively on some axially symmetric slice domain $\Omega$ containing the closure of $\mathbb{B}$. Then, for any $I\in\Sq$ we have
$$\displaystyle\int_{\partial\mathbb{B}_I}\sum_{j=0}^{n-1}(-1)^jg\overline{\partial_{I}}^{n-1-j}dw_I \overline{\partial_I}^j f=0, \textbf{   } $$
where $dw_I=-dw I$for $w\in\C_I.$
\end{thm}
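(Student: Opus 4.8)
The plan is to reduce the identity to a planar Green/Stokes computation on the disc $\mathbb{B}_I=\mathbb{B}\cap\C_I$, and to recognise the integrand as the restriction to $\partial\mathbb{B}_I$ of an $\Hq$-valued $1$-form whose exterior differential vanishes identically. Writing $w=x+Iy$ on $\C_I$ and recalling $dw_I=-dw\,I=dy-I\,dx$, I would introduce the form
\[
\omega:=\sum_{j=0}^{n-1}(-1)^j\,\big(g\,\overline{\partial_I}^{\,n-1-j}\big)\,dw_I\,\big(\overline{\partial_I}^{\,j}f\big),
\]
so that the statement is precisely $\int_{\partial\mathbb{B}_I}\omega=0$. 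Since $f_I$ and $g_I$ are of class $\mathcal{C}^n$ on a neighbourhood of $\overline{\mathbb{B}_I}$ and $\partial\mathbb{B}_I$ is a smooth Jordan curve, Stokes' theorem applies and gives $\int_{\partial\mathbb{B}_I}\omega=\int_{\mathbb{B}_I}d\omega$; thus the whole problem collapses to showing $d\omega=0$.

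The computational heart is a single Leibniz identity. Keeping the left factor $A$ and the right factor $B$ in their order (this is where the non-commutativity must be tracked) and using that $dx,dy$ are central, I would expand $A\,dw_I\,B=(AB)\,dy-(AIB)\,dx$ and compute
\[
d(A\,dw_I\,B)=\big(\partial_x(AB)+\partial_y(AIB)\big)\,dx\wedge dy=2\big[(A\overline{\partial_I})B+A(\overline{\partial_I}B)\big]\,dx\wedge dy,
\]
where $A\overline{\partial_I}=\tfrac12(\partial_x A+\partial_y A\,I)$ is the right and $\overline{\partial_I}B=\tfrac12(\partial_x B+I\partial_y B)$ the left Cauchy--Riemann operator. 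Applying this with $A=g\,\overline{\partial_I}^{\,n-1-j}$ and $B=\overline{\partial_I}^{\,j}f$ turns the $j$-th summand of $d\omega$ into $2[T_j+T_{j+1}]\,dx\wedge dy$, where I set $T_m:=(g\,\overline{\partial_I}^{\,n-m})(\overline{\partial_I}^{\,m}f)$ for $0\le m\le n$.

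It then remains to observe that the alternating signs make the sum telescope: $\sum_{j=0}^{n-1}(-1)^j(T_j+T_{j+1})$ collapses, the intermediate terms $T_1,\dots,T_{n-1}$ cancelling in pairs, to $T_0+(-1)^{n-1}T_n$. Finally $T_0=(g\,\overline{\partial_I}^{\,n})f=0$ because $g$ is right polyanalytic of order $n$, and $T_n=g(\overline{\partial_I}^{\,n}f)=0$ because $f$ is left polyanalytic of order $n$; hence $d\omega=0$ and the integral vanishes. I expect the main obstacle to be purely bookkeeping: getting the signs and the left/right placement of quaternions correct in the Leibniz identity for $d(A\,dw_I\,B)$, and fixing the correct convention for the right operator $g\,\overline{\partial_I}$, since on $\Hq$ one cannot freely commute $I$ past the coefficients. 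Everything else — Stokes on the disc and the telescoping — is routine once that identity is in hand.
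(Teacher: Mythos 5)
Your proof is correct, but it follows a genuinely different route from the paper's. The paper invokes the Splitting Lemma: choosing $J\perp I$ it writes $f=F_1+F_2J$ and $g=G_1+JG_2$ with $F_l,G_l$ complex polyanalytic of order $n$ on $\mathbb{B}_I$, expands the integral into the four pieces $\Phi(F_1,G_1)+\Phi(F_2,G_1)J+J\Phi(F_1,G_2)+J\Phi(F_2,G_2)J$, and kills each one by citing the complex poly-Cauchy integral theorem of Brackx--Delanghe. You instead prove the statement directly by Stokes on $\mathbb{B}_I$: your Leibniz identity
\[
d\left(A\,dw_I\,B\right)=2\left[(A\overline{\partial_I})B+A(\overline{\partial_I}B)\right]dx\wedge dy
\]
is correct (I checked the left/right placement of $I$, which is the only delicate point, and it is consistent with the paper's convention that the right Cauchy--Riemann operator acts as $A\overline{\partial_I}=\tfrac12(\partial_xA+(\partial_yA)I)$), the telescoping to $T_0+(-1)^{n-1}T_n$ is right, and both endpoint terms vanish by the order-$n$ polyanalyticity of $g$ on the right and $f$ on the left. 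Your argument is self-contained and in effect reproves the complex poly-Cauchy theorem as a special case (take $f,g$ valued in $\C_I$), at the cost of the sign/ordering bookkeeping you flagged; the paper's argument is shorter but leans on the splitting machinery and on the classical complex result as a black box. Both are valid; just make sure, if you write yours up, to state explicitly the convention for the right operator $g\overline{\partial_I}$ and to note that Stokes applies componentwise to the $\Hq$-valued form, using that $f_I,g_I$ are $\mathcal{C}^n$ on a neighbourhood of $\overline{\mathbb{B}_I}$ because $\Omega$ contains the closure of $\mathbb{B}$.
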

\begin{proof}
Let $I\in\Sq$ and choose $J\in\Sq$ be such that $I\perp J$. Thus, by Splitting Lemma for slice polyanalytic functions proved in \cite{ADS2019} we can write $$f(w)=F_1(w)+F_2(w)J \text{ and } g(w)=G_1(w)+JG_2(w),$$  where $F_l,G_l:\mathbb{B}_I\longrightarrow \C_I$ for $l=1,2$ are complex polyanalytic functions of order $n$. In order to simplify the computations, we set $$\Phi(f,g):=\displaystyle\int_{\partial\mathbb{B}_I}\sum_{j=0}^{n-1}(-1)^jg\overline{\partial_{I}}^{n-1-j}dw_I \overline{\partial_I}^j f$$

Then, direct computations lead to
$$\Phi(f,g)=\Phi(F_1,G_1)+\Phi(F_2,G_1)J+J\Phi(F_1,G_2)+J\Phi(F_2,G_2)J$$
At this stage, we apply the poly-Cauchy integral theorem proved in \cite{DB1978} to deduce that $$\Phi(F_1,G_1)=\Phi(F_2,G_1)=\Phi(F_1,G_2)=\Phi(F_2,G_2)=0.$$
This ends the proof.
\end{proof}
Now, let $n\geq 1$ and $w\in\mathbb{B}$ be such that $w\in\C_J$ with $J\in\mathbb{S}$. For all $0 \leq j\leq n-1$, we consider the function defined by $$\phi_{j,w}(z)=\displaystyle \frac{1}{w-z}\frac{(Re(w-z))^j}{j!}; \textbf{ }z\in \mathbb{B}_J, z\neq w.$$
Then, we have
\begin{prop}\label{PCK}
For all $0 \leq j \leq n-1$, the slice polyanalytic extension of $\phi_{j,w}$ is given by
$$\phi_{j,w}(q)=S^{-1}(w,q)\frac{(Re(w-q))^j}{j!} \textbf{ } \forall q\in\mathbb{B}, q\notin [w],$$
where $S^{-1}(w,q)$ is the slice hyperholomorphic Cauchy kernel.
\end{prop}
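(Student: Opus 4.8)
The plan is to recognize the proposed right–hand side as a slice polyanalytic function of order at most $n$ that agrees with $\phi_{j,w}$ on the single slice $\C_J$ containing $w$, and then to conclude by the uniqueness of the slice polyanalytic extension. Concretely, I would first invoke the identity (representation) principle for slice polyanalytic functions from \cite{ADS2019}: a slice polyanalytic function on the axially symmetric set $\mathbb B\setminus[w]$ is completely determined by its restriction to one slice $\C_J$ (equivalently, the slice hyperholomorphic components of its poly-decomposition are determined by their values on $\C_J$ and then by the identity principle for slice regular functions). Hence it suffices to verify two facts: that the candidate $q\mapsto S^{-1}(w,q)\tfrac{(Re(w-q))^j}{j!}$ is slice polyanalytic of order $j+1\le n$, and that it reduces to $\phi_{j,w}$ on $\mathbb B_J$.

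For the slice polyanalyticity I would produce the poly-decomposition required by Proposition \ref{carac3}. Writing $Re(w-q)=Re(w)-x_0$ with $x_0=\tfrac{q+\overline{q}}{2}$ and using that $q$ and $\overline{q}$ commute, I would expand by the binomial theorem
$$(Re(w)-x_0)^j=\Big(Re(w)-\tfrac{q}{2}-\tfrac{\overline{q}}{2}\Big)^j=\sum_{k=0}^{j}\binom{j}{k}\frac{(-1)^k}{2^k}\,\overline{q}^k\Big(Re(w)-\tfrac{q}{2}\Big)^{j-k}.$$
Multiplying by the scalar $\tfrac{1}{j!}$ and by $S^{-1}(w,q)$ (which, being real, commutes through), this exhibits the candidate in the form $\sum_{k=0}^{j}\overline{q}^k g_k(q)$ with $g_k(q)=\tfrac{1}{j!}\binom{j}{k}\tfrac{(-1)^k}{2^k}\big(Re(w)-\tfrac q2\big)^{j-k}S^{-1}(w,q)$. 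Each $g_k$ is the product of a real–coefficient polynomial in $q$ with the slice hyperholomorphic Cauchy kernel $S^{-1}(w,\cdot)$, so Proposition \ref{carac3} will certify that the candidate is slice polyanalytic of order $j+1\le n$, once the $g_k$ are known to be slice hyperholomorphic.

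Then I would check the restriction to the slice. For $q\in\C_J$, since $w\in\C_J$ the elements $w,q,\overline{q}$ mutually commute and $s^2-2Re(q)s+|q|^2$ factors as $(w-q)(w-\overline{q})$ at $s=w$, whence $S^{-1}(w,q)=(w-q)^{-1}$; as $Re(w-q)$ is the same real scalar in both expressions, the candidate restricts exactly to $\tfrac{1}{w-q}\tfrac{(Re(w-q))^j}{j!}=\phi_{j,w}(z)$ on $\mathbb B_J\setminus\{w\}$. Combining this with the slice polyanalyticity and the uniqueness of the slice polyanalytic extension yields the asserted formula on $\mathbb B\setminus[w]$.

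The step requiring the most care is the slice hyperholomorphicity of the coefficients $g_k$, i.e. that multiplying the slice regular kernel $S^{-1}(w,\cdot)$ by the polynomial factor $\big(Re(w)-\tfrac q2\big)^{j-k}$ preserves slice regularity. This is where I would use that a real–coefficient polynomial in $q$ is slice preserving (intrinsic), so that its pointwise product with a slice regular function coincides with the corresponding $*$-product and is therefore again slice regular; everything else is the binomial bookkeeping above and the elementary slice computation of $S^{-1}(w,q)$ on $\C_J$.
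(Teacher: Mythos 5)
Your proof is correct and follows the same overall skeleton as the paper's: show that the candidate $q\mapsto S^{-1}(w,q)\frac{(Re(w-q))^j}{j!}$ is slice polyanalytic, check that it restricts to $\phi_{j,w}$ on the slice $\mathbb{B}_J$ containing $w$, and conclude by the identity principle from \cite{ADS2019}. The difference lies in how the slice polyanalyticity of the candidate is established. The paper treats this step as a black box: it observes that $q\mapsto \frac{(Re(w-q))^j}{j!}$ is a real-valued slice polyanalytic function and that $S^{-1}(w,\cdot)$ is left slice regular, and then cites the product result (Proposition 3.3 of \cite{ADS2019}) to conclude that the product is slice polyanalytic of order $n$. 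You instead make the poly-decomposition of Proposition \ref{carac3} explicit, expanding $(Re(w)-\tfrac{q}{2}-\tfrac{\bar q}{2})^j$ by the binomial theorem (legitimate since $q$, $\bar q$ and the real scalar $Re(w)$ commute) and reducing the matter to the standard fact that an intrinsic polynomial times a left slice regular function is left slice regular. Your route is more self-contained and has the small bonus of exhibiting the candidate as slice polyanalytic of the sharper order $j+1$ rather than merely of order $n$; the paper's route is shorter because it leans on the already-proved product lemma. Your slice computation $S^{-1}(w,q)=(w-q)^{-1}$ for $q\in\C_J$ and the appeal to the identity principle match the paper's argument (and the paper's subsequent Remark records yet a third route, via the extension/representation formula, which neither of you uses here).
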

\begin{proof}
Let $0 \leq j \leq n-1$. We know that $S^{-1}(w,q)$ is left slice regular with respect to the variable $q$. Moreover, it is clear that $\displaystyle q\mapsto \frac{(Re(w-q))^j}{j!} $ is a real valued slice polyanalytic function of order $n$ for all $ 0 \leq j\leq n-1$. So, we can apply Proposition 3.3 in \cite{ADS2019} to see that the product $\displaystyle S^{-1}(w,q)\frac{(Re(w-q))^j}{j!}$ is slice polyanalytic of order $n$ with respect to the variable $q$. And since it coincides with $\phi_{j,w}(z)$ on $\mathbb{B}_J$ the proof ends thanks to the identity principle (see \cite{ADS2019}).
\end{proof}
\begin{rem}
Another way to prove Proposition \ref{PCK} consists of using the extension Lemma for slice polyanalytic functions, see \cite{ADS2019}. Indeed, we note that $z\mapsto \phi_{j,w}(z)$ is polyanalytic of order $n$ for any $z\neq w$. Thus, it admits a unique slice polyanalytic extension denoted by $ext[\phi_{j,w}(z)](q)$. By definition, for $q=x+I_qy$ and $z=x+Jy$ such that $q \notin [w]$ we have
\[ \begin{split}
 \displaystyle ext[\phi_{j,w}(z)](q) &= \frac{1}{2}[\phi_{j,w}(z)+\phi_{j,w}(\overline{z})]+\frac{I_qJ}{2}[\phi_{j,w}(\overline{z})-\phi_{j,w}(z)] \\
 &=ext\left(\frac{1}{w-z}\right)\frac{(Re(w-q))^j}{j!}  \\
&=S^{-1}(w,q)\frac{(Re(w-q))^j}{j!}, \\
\end{split}
\]
where $S^{-1}(w,q)$ is the slice hyperholomorphic Cauchy kernel given by $$\displaystyle S^{-1}(w,q)=(w-\overline{q})(w^2-2Re(q)w+|q|^2)^{-1}.$$
\end{rem}
\begin{prop}
Let $q,w\in\mathbb{B}$ be such that $q\notin[w]$. The function, $\phi_{j,w}(q)$ is right slice polynalytic of order $j+1$ in the variable $w$.
\end{prop}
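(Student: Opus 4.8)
The plan is to separate the dependence of $\phi_{j,w}(q)=S^{-1}(w,q)\frac{(Re(w-q))^j}{j!}$ on the variable $w$ into two factors and treat them independently, mirroring the argument used for the variable $q$ in Proposition \ref{PCK}. First I would observe that, for fixed $q$ with $q\notin[w]$, the slice Cauchy kernel $S^{-1}(w,q)$ is right slice hyperholomorphic in the variable $w$. This is the standard right slice regularity of the slice hyperholomorphic Cauchy kernel in its first argument, and it is consistent with the right slice regularity of $\mathcal{F}(s,q)=\Delta S^{-1}(s,q)$ recorded in Theorem \ref{CFK}.

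Next I would analyze the scalar factor $\frac{(Re(w-q))^j}{j!}$. Writing $w=u+Iv$ on a slice $\C_I$ and recalling that $Re(q)$ is a fixed real number, this factor equals $\frac{(u-Re(q))^j}{j!}$, a real-valued polynomial of degree $j$ in $u=Re(w)$ that is independent of $v$. Consequently, on each slice the Cauchy-Riemann operator only lowers the degree in $u$, so its $(j+1)$-th power annihilates it while its $j$-th power does not; hence $\frac{(Re(w-q))^j}{j!}$ is a real-valued, right slice polyanalytic function of order exactly $j+1$ in the variable $w$.

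Then I would combine the two observations through the right-handed analog of the product rule, Proposition 3.3 of \cite{ADS2019}: the product of a right slice hyperholomorphic function with a real-valued right slice polyanalytic function of order $j+1$ is again right slice polyanalytic of order $j+1$. Since the scalar factor is real-valued it commutes with the quaternionic values of $S^{-1}(w,q)$, so the product is unambiguous and the Leibniz computation for the right Cauchy-Riemann operator closes: each application of the operator leaves the right slice regular factor annihilated and differentiates the real polynomial once, so after $j+1$ steps one obtains zero. Extending the conclusion from each slice by the identity principle yields the claim.

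The main obstacle I anticipate is purely one of bookkeeping: making sure the right-handed versions of both the Cauchy kernel's regularity and the product proposition of \cite{ADS2019} are available in exactly the form needed, and tracking that the real-valued factor carries order precisely $j+1$ rather than something smaller, so that the order of the product is not overstated. Once the right product rule is applied on each complex slice $\C_I$, no further computation is required.
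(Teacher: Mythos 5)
Your proposal is correct and follows essentially the same route as the paper's proof: right slice regularity of $S^{-1}(w,q)$ in $w$, the observation that the real-valued factor $\frac{(Re(w-q))^j}{j!}$ is right slice polyanalytic of order $j+1$ in $w$, and the right-handed version of the product result, Proposition 3.3 of \cite{ADS2019}. The paper states this in one line; you have simply supplied the details it leaves implicit.
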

\begin{proof}
The proof is easy using the fact that $S^{-1}(w,q)$ is right slice regular in $w$ combined with the right version of Proposition 3.3 in \cite{ADS2019}.
\end{proof}
\begin{thm}\label{PCF}
Let $\Omega$ be an axially symmetric slice domain containing the closure of $\mathbb{B}$ and  $f:\Omega\longrightarrow \Hq$ a slice polyanalytic function of order $n\geq 1$. For $I\in\mathbb{S}$, set $dw_I=-dw I$.
The integral below $$\displaystyle \frac{1}{2\pi}\int_{\partial \mathbb{B}_I}\sum_{j=0}^{n-1}(-2)^jS^{-1}(w,q)\frac{(Re(w-q))^j}{j!}dw_I\overline{\partial_I}^j(f)(w),$$ does not depend on the choice of the imaginary unit $I\in\mathbb{S}$.

Moreover, for all $q\in\mathbb{B}$ we have the integral representation
$$f(q)=\displaystyle \frac{1}{2\pi}\int_{\partial \mathbb{B}_I}\sum_{j=0}^{n-1}(-2)^jS^{-1}(w,q)\frac{(Re(w-q))^j}{j!}dw_I\overline{\partial_I}^j(f)(w). $$

\end{thm}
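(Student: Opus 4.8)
The plan is to reduce the quaternionic identity to the complex polyanalytic Cauchy formula recalled above (Theorem 2.1 in \cite{DB1978}) by means of the Splitting Lemma, and then to promote the resulting slice-wise identity to a global one by establishing independence on the imaginary unit.

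First I would fix $q\in\mathbb{B}$ and choose $I\in\Sq$ with $q\in\C_I$ (which is always possible, taking $I=\vec{q}\,/|\vec{q}\,|$ when $q\notin\R$, and any $I$ when $q\in\R$). By the Splitting Lemma (Proposition \ref{split}) I pick $J\in\Sq$ with $I\perp J$ and write $f_I(w)=F(w)+G(w)J$, where $F,G:\mathbb{B}_I\longrightarrow\C_I$ are complex polyanalytic of order $n$. Since $q\in\C_I$, on the slice the Cauchy kernel degenerates to the complex one, $S^{-1}(w,q)=(w-q)^{-1}$, so that $\phi_{j,w}(q)=S^{-1}(w,q)\frac{(Re(w-q))^j}{j!}$ coincides with the complex polyanalytic Cauchy kernel $2\pi i\,\psi_{j+1}(w-q)$ under the identification $\C_I\cong\C$, while $\overline{\partial_I}^j$ restricts to the $j$-th power of the Cauchy--Riemann operator $\partial_{\overline{w}}^j$. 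Inserting $f_I=F+GJ$ into the integrand and using $dw_I=-dw\,I$ to convert $\frac{1}{2\pi}$ into $\frac{1}{2\pi i}$ (the constants $(-2)^j$ matching on the nose), the integral splits into two complex contour integrals over $\partial\mathbb{B}_I$, acting on $F$ and on $G$ respectively.

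Applying the complex polyanalytic Cauchy formula of \cite{DB1978} separately to $F$ and $G$, these two integrals return $F(q)$ and $G(q)$, whence
\[
\frac{1}{2\pi}\int_{\partial\mathbb{B}_I}\sum_{j=0}^{n-1}(-2)^jS^{-1}(w,q)\frac{(Re(w-q))^j}{j!}dw_I\,\overline{\partial_I}^j(f)(w)=F(q)+G(q)J=f_I(q)=f(q).
\]
This proves the representation for the adapted unit $I$. To upgrade it to all units I establish independence on $I$, which forces the formula to hold for every $I\in\Sq$, including those not containing $q$. Here I use the poly-decomposition (Proposition \ref{carac3}): writing $f=\sum_{k=0}^{n-1}\overline{q}^kf_k$ with $f_k\in\mathcal{SR}(\Omega)$ and using $\overline{\partial_I}(\overline{q}^k)=k\overline{q}^{k-1}$ together with $\overline{\partial_I}f_k=0$, one finds $\overline{\partial_I}^j f=\sum_{k=j}^{n-1}\frac{k!}{(k-j)!}\overline{q}^{k-j}f_k$, a slice polyanalytic function of order $n-j$ that does not depend on $I$. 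Since the kernel $\phi_{j,w}(q)$ is right slice polyanalytic in $w$ (by the Proposition preceding this theorem) and $\overline{\partial_I}^j f$ is left slice polyanalytic in $w$, each summand $\int_{\partial\mathbb{B}_I}\phi_{j,w}(q)\,dw_I\,\overline{\partial_I}^j f(w)$ pairs a right slice polyanalytic factor with a left one through $dw_I$; the Representation formula for slice polyanalytic functions (see \cite{ADS2019}) then expresses the integral over $\partial\mathbb{B}_J$ in terms of the one over $\partial\mathbb{B}_I$, the $J$-dependent contributions cancelling by parity exactly as in the Cauchy integral theorem proved above. This yields the claimed independence and, combined with the previous step, the representation of $f(q)$ for all $I\in\Sq$.

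The main obstacle is twofold. The first, purely computational, point is to verify carefully that on the adapted slice the quaternionic kernel together with $\frac{1}{2\pi}dw_I$ reproduces the complex kernel $(-2)^j\psi_{j+1}(w-q)\,dw$ of \cite{DB1978}, keeping track of the right-sided multiplication and of the orientation $dw_I=-dw\,I$. The second, more structural, point is the independence step: its validity rests on the fact that $\overline{\partial_I}^j f$ is a bona fide $I$-independent slice polyanalytic function, so that the Representation formula applies to the entire polyanalytic integrand and not merely to the slice regular Cauchy kernel; without this observation one could not reduce the change of slice to the already established regular and parity arguments.
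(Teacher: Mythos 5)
Your argument is correct in substance, and its first half is exactly the paper's: fix the slice of integration, split $f_I=F+GJ$ by the Splitting Lemma, note that for $w,q$ in the same complex plane the kernel $S^{-1}(w,q)\frac{(Re(w-q))^j}{j!}$ collapses to the complex polyanalytic Cauchy kernel, and apply the complex poly-Cauchy formula of Brackx--Delanghe to $F$ and $G$ to reproduce $f(q)$ for $q$ in that slice. Where you genuinely diverge is the passage from one slice to all of $\mathbb{B}$. The paper does not route this through the independence statement: it observes (Proposition \ref{PCK}) that $q\mapsto S^{-1}(w,q)\frac{(Re(w-q))^j}{j!}$ is left slice polyanalytic of order $n$ in $q$, so the integral $\Psi(q)$ is itself slice polyanalytic of order $n$ on $\mathbb{B}$; since $\Psi=f$ on the single slice $\mathbb{B}_I$, the Identity Principle for slice polyanalytic functions gives $\Psi=f$ on all of $\mathbb{B}$, and the independence on $I$ is handled separately (poly-decomposition plus term-by-term integration of the series of the components $f_k$). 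Your route instead makes the whole representation formula hinge on the independence claim, which is the step both you and the paper leave sketchiest -- your appeal to a representation-formula/parity argument for the pairing of the right slice polyanalytic kernel with the left slice polyanalytic data can be made to work, but it is the most delicate link in your chain, and an error there would sink the main identity rather than only the independence clause. The paper's use of Proposition \ref{PCK} together with the Identity Principle decouples the two assertions and is the cheaper argument; on the other hand, your explicit computation $\overline{\partial_I}^{\,j}f=\sum_{k\ge j}\frac{k!}{(k-j)!}\overline{q}^{\,k-j}f_k$, showing that the polyanalytic data in the integrand is $I$-independent, is a useful ingredient that the paper leaves implicit and would strengthen its own one-line justification of the independence claim.
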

\begin{proof}
The independence of the choice of $I\in\Sq$ is a direct consequence of the poly-decomposition in Proposition \ref{carac3} combined with the series expansion theorem for slice hyperholomorphic functions. To show the second part of the statement,  let $J\in\mathbb{S}$ be such that $J\perp I$. We know that $f\in\mathcal{SP}_n(\mathbb{B})$, so by Proposition 3.4 in \cite{ADS2019} there exist two polyanalytic functions $F,G:\mathbb{B}_J\longrightarrow \C_J$ of order $n$ such that for any $w\in\mathbb{B}_J$ we have $$f(w)=F(w)+G(w)J.$$
In particular, $$\overline{\partial_I}^j f(w)=\overline{\partial_I}^jF(w)+\overline{\partial_I}^jG(w)J.$$

Then, we have on $\mathbb{B}_I$ the following reproducing property thanks to the complex poly-Cauchy formula applied to $F$ and $G$
\[ \begin{split}
 \displaystyle \frac{1}{2\pi}\int_{\partial \mathbb{B}_I}\sum_{j=0}^{n-1}(-2)^jS^{-1}(w,q)\frac{(Re(w-q))^j}{j!}dw_I\overline{\partial_I}^j(f)(w)
 &=F(q)+G(q)J  \\
&=f(q). \\
\end{split}
\]
Furthermore, in Proposition \ref{PCK} we deal with a slice polyanalytic kernel. So, the function $$\displaystyle \Psi(q)=\int_{\partial \mathbb{B}_I}\sum_{j=0}^{n-1}(-2)^jS^{-1}(w,q)\frac{(Re(w-q))^j}{j!}dw_I\overline{\partial_I}^j(f)(w),$$
is also slice polyanayltic of order $n$. Hence, we can conclude by Identity principle since $\Psi$ coincides with $f$ on $\mathbb{B}_I$.

\end{proof}
\begin{rem}
The case $n=1$ in the previous theorem gives the slice hyperholomorphic Cauchy formula that can be found in \cite{CSS}.
\end{rem}
As a direct application of the slice poly Cauchy formula we will prove the poly-Fueter mapping theorem in its integral form. To this end, we need some technical lemmas. First, for every $n\geq 1$,  $1 \leq j\leq n-1$ and $w\in\partial \mathbb{B}$, denote by $\mathcal{F}_j(w,q)$ the quaternionic valued function on $\mathbb{B}$ sending $q$ into
\begin{equation}
\mathcal{F}_j(w,q):=S^{-1}(w,q)\frac{Re^j(w-q)}{j!},
\end{equation}
where $Re^j(w-q):=\left(Re(w-q)\right)^j$.
\begin{lem}\label{VFj}
Let $w\in\partial\mathbb{B}$. Then, for every $q\in\mathbb{B}$,  we have
$$\displaystyle V(\mathcal{F}_0(w,q))=0 \text{ and }V(\mathcal{F}_j(w,q))=-\mathcal{F}_{j-1}(w,q), \forall j\geq 1.$$
\end{lem}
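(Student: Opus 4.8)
The plan is to reduce both identities to the Leibniz rule for the operator $G$ recorded in Proposition \ref{GPRO}(1), together with the relation $V=|\vec q|^{-2}G$ from Remark \ref{GVR}, exploiting the fact that the scalar factor $Re^j(w-q)/j!$ is real valued and depends on $q$ only through $x_0$. The case $j=0$ is immediate: $\mathcal{F}_0(w,q)=S^{-1}(w,q)$ is left slice regular in $q$ on all of $\mathbb{B}$ (since $w\in\partial\mathbb{B}$ forces the singular sphere $[w]$ onto $\partial\mathbb{B}$, so there is no singularity inside), hence it belongs to $\ker(G)$ on this axially symmetric slice domain by \cite{CGCS2013}, and therefore $V(\mathcal{F}_0(w,q))=0$.

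For $j\geq 1$ I would write $\mathcal{F}_j(w,q)=S^{-1}(w,q)\,p_j(q)$ with $p_j(q):=Re^j(w-q)/j!=(Re(w)-x_0)^j/j!$. The crucial observation is that $Re(w-q)=Re(w)-x_0$ depends on $q$ through $x_0$ alone, so $\partial_{x_l}p_j=0$ for $l=1,2,3$ and in particular $\sum_{l=1}^3 x_l\partial_{x_l}p_j=0$. Applying Proposition \ref{GPRO}(1) with $f=S^{-1}(w,q)$ and $g=p_j$, the correction term $(\vec q f-f\vec q)\sum_l x_l\partial_{x_l}p_j$ vanishes, and since $G(S^{-1}(w,q))=0$ we are left with $G(\mathcal{F}_j)=S^{-1}(w,q)\,G(p_j)$. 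A direct evaluation from the definition of $G$ gives $G(p_j)=|\vec q|^2\partial_{x_0}p_j$, and since $\partial_{x_0}p_j=-(Re(w)-x_0)^{j-1}/(j-1)!=-p_{j-1}$, we obtain $G(p_j)=-|\vec q|^2 p_{j-1}$. Because $p_{j-1}$ is real and hence central, this yields $G(\mathcal{F}_j(w,q))=-|\vec q|^2 S^{-1}(w,q)p_{j-1}(q)=-|\vec q|^2\mathcal{F}_{j-1}(w,q)$, and dividing by $|\vec q|^2$ gives $V(\mathcal{F}_j(w,q))=-\mathcal{F}_{j-1}(w,q)$ on $\mathbb{B}\setminus\R$.

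The last step is to upgrade this identity from $\mathbb{B}\setminus\R$ to all of $\mathbb{B}$, which is the only delicate point, since $V$ is a priori defined only off the real axis. Here I would invoke Proposition \ref{PCK}: the right-hand side $-\mathcal{F}_{j-1}(w,q)$ is slice polyanalytic in $q$, hence continuous on $\mathbb{B}$, so it furnishes exactly the continuous extension of $V(\mathcal{F}_j(w,q))$ across $\R$ in the sense of Remark \ref{GVR}; the asserted equality then holds on the whole of $\mathbb{B}$. The main obstacle is thus this continuity and extension bookkeeping across the real axis rather than the algebra: the computational core collapses once one notices that $Re(w-q)$ depends solely on $x_0$, which simultaneously annihilates the Leibniz correction term and the spatial part of $G(p_j)$.
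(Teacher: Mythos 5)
Your proposal is correct and follows essentially the same route as the paper: the $j=0$ case via slice regularity of $S^{-1}(w,\cdot)$, and the $j\geq 1$ case via the Leibniz rule of Proposition \ref{GPRO} applied to the product $S^{-1}(w,q)\,Re^j(w-q)/j!$, the computation $G(p_j)=|\vec q\,|^2\partial_{x_0}p_j=-|\vec q\,|^2p_{j-1}$, division by $|\vec q\,|^2$, and a continuity argument to cross the real axis. Your explicit remark that the correction term vanishes because $Re(w-q)$ depends on $q$ only through $x_0$ is in fact a sharper justification than the paper's phrase ``one of the functions is real valued,'' but the argument is the same.
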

\begin{proof}
First, we have $\mathcal{F}_0(w,q)=S^{-1}(w,q)$ is the slice hyperholomorphic Cauchy kernel. So, $q\longmapsto \mathcal{F}_0(w,q)$ is slice hyperholomorphic with respect to the variable $q$. Thus, we have $V(\mathcal{F}_0(w,q))=0 $ for all $q$. On the other hand, for all $j\geq 1$ we have
$$\displaystyle G(\mathcal{F}_j(w,q))=G\left(S^{-1}(w,q)\frac{Re^j(w-q)}{j!}\right),\textbf{ } \forall q\in \mathbb{B}.$$
Then, we apply Proposition \ref{GPRO} on which we see how the global operator $G$ acts on the product keeping in mind that one of the functions is real valued and get
\begin{equation}\label{GFj}
\displaystyle G(\mathcal{F}_j(w,q))=S^{-1}(w,q)G\left(\frac{Re^j(w-q)}{j!}\right),\textbf{ } \forall q\in \mathbb{B}.
\end{equation}

However, we have
\[ \begin{split}
 \displaystyle G\left(\frac{Re^j(w-q)}{j!}\right) &=|\vec{q}\,|^2\partial_{x_0}\left(\frac{Re^j(w-q)}{j!}\right)  \\
 &=-|\vec{q}\,|^2\frac{Re^{j-1}(w-q)}{(j-1)!}
\\
\end{split}
\]
Then, we replace in \eqref{GFj} and get
$$\displaystyle G(\mathcal{F}_j(w,q))=-|\vec{q}\,|^2S^{-1}(w,q)\frac{Re^{j-1}(w-q)}{(j-1)!},\textbf{ } \forall q\in \mathbb{B}.$$
Hence, we use Remark \ref{GVR} to see that the result holds outside the real line. Then, we apply again Theorem 2.4 in\cite{P2019} which allows to extend the formula everywhere on $\mathbb{B}$.  Finally, we conclude that for any $q\in\mathbb{B}$ we have
$$V(\mathcal{F}_j(w,q))=-\mathcal{F}_{j-1}(w,q), \forall j\geq 1.$$
This ends the proof.
\end{proof}
\begin{lem}
Let $w\in\partial\mathbb{B}$. For any $n\geq 1$, we set $$\tau_n=\Delta\circ V^{n-1}.$$ Then, for every $q\in\mathbb{B}$, we have
\begin{enumerate}
\item $\tau_1(\mathcal{F}_0(w,q))=\Delta S^{-1}(w,q)$.
\item For all $n\geq 2$, we have
\begin{enumerate}
\item $\tau_n(\mathcal{F}_j(w,q))=0, \forall 0 \leq j < n-1$.
\item $\tau_n(\mathcal{F}_{n-1}(w,q))=(-1)^{n-1}\Delta S^{-1}(w,q).$
\end{enumerate}
\end{enumerate}
\end{lem}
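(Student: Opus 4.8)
The plan is to compute $\tau_n$ applied to each kernel function $\mathcal{F}_j(w,q)$ by exploiting the recursive action of the global operator $V$ established in Lemma \ref{VFj}, and then finish with a single application of the Fueter map $\Delta$. The crucial input is the identity $V(\mathcal{F}_j(w,q))=-\mathcal{F}_{j-1}(w,q)$ for $j\geq 1$ together with $V(\mathcal{F}_0(w,q))=0$. Since $\tau_n=\Delta\circ V^{n-1}$, everything reduces to tracking how many times $V$ is applied relative to the index $j$.

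\begin{proof}
For assertion (1), when $n=1$ we have $\tau_1=\Delta$ by definition, so $\tau_1(\mathcal{F}_0(w,q))=\Delta S^{-1}(w,q)$ since $\mathcal{F}_0(w,q)=S^{-1}(w,q)$.

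Now fix $n\geq 2$ and $w\in\partial\mathbb{B}$. We first record the iterated action of $V$ on $\mathcal{F}_j(w,q)$. By Lemma \ref{VFj}, applying $V$ lowers the index by one and introduces a factor $-1$, while $V(\mathcal{F}_0(w,q))=0$. Hence, by a straightforward induction on $m$,
\begin{equation}\label{Viter}
V^m(\mathcal{F}_j(w,q))=
\begin{cases}
(-1)^m\mathcal{F}_{j-m}(w,q), & 0\leq m\leq j,\\
0, & m>j,
\end{cases}
\end{equation}
valid for all $q\in\mathbb{B}$.

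To prove (2)(a), let $0\leq j<n-1$, so that $n-1>j$. Taking $m=n-1$ in \eqref{Viter} gives $V^{n-1}(\mathcal{F}_j(w,q))=0$, and therefore
$$\tau_n(\mathcal{F}_j(w,q))=\Delta\circ V^{n-1}(\mathcal{F}_j(w,q))=\Delta(0)=0,\quad\forall q\in\mathbb{B}.$$

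To prove (2)(b), take $j=n-1$ and $m=n-1$ in \eqref{Viter}. Since now $m=j$, the first case applies and yields
$$V^{n-1}(\mathcal{F}_{n-1}(w,q))=(-1)^{n-1}\mathcal{F}_0(w,q)=(-1)^{n-1}S^{-1}(w,q).$$
Applying $\Delta$ and using linearity, we conclude
$$\tau_n(\mathcal{F}_{n-1}(w,q))=\Delta\left((-1)^{n-1}S^{-1}(w,q)\right)=(-1)^{n-1}\Delta S^{-1}(w,q),\quad\forall q\in\mathbb{B}.$$
This ends the proof.
\end{proof}

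The only subtlety worth flagging is the passage from $\Omega\setminus\R$ to all of $\mathbb{B}$: the recursion in Lemma \ref{VFj} was first derived off the real axis, but that lemma already incorporates the continuous extension across $\R$ via Theorem 2.4 in \cite{P2019}, so the iterate \eqref{Viter} holds on all of $\mathbb{B}$ and no further extension argument is needed here. I expect the bookkeeping in \eqref{Viter}—matching the number $n-1$ of applications of $V$ against the index $j$—to be the only genuine step; everything else is a direct consequence of the linearity of $\Delta$ and $V$.
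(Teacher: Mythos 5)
Your proof is correct and takes essentially the same route as the paper: both arguments reduce everything to iterating the identity $V(\mathcal{F}_j)=-\mathcal{F}_{j-1}$ from Lemma \ref{VFj} and then applying $\Delta$. The only difference is organizational — you package the iteration as the closed formula $V^m(\mathcal{F}_j)=(-1)^m\mathcal{F}_{j-m}$ (or $0$ when $m>j$) proved by induction on $m$, while the paper inducts on $n$ using the relation $\tau_{n+1}=\tau_n\circ V$; the mathematical content is identical.
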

\begin{proof}
(1) It is immediate by the definition of the map $\tau_1=\Delta$. \\
(2) We reason by induction. First, we note that for $n=2$, $\mathcal{F}_0(w,q)$ is slice hyperholomorphic with respect to $q$ so that $$\tau_2(\mathcal{F}_0(w,q)=\Delta\circ V(\mathcal{F}_0(w,q))=0.$$
Moreover, we have $$\displaystyle \tau_2(\mathcal{F}_1(w,q))=\Delta \left( V(\mathcal{F}_1(w,q))\right).$$
Moreover,  Lemma \ref{VFj} yields $$\displaystyle V(\mathcal{F}_1(w,q))=-\mathcal{F}_0(w,q)$$
so we get $$\displaystyle \tau_2(\mathcal{F}_1(w,q))=-\Delta (\mathcal{F}_0(w,q))=-\Delta S^{-1}(w,q).$$
 We conclude that the result holds for $n=2$. Let us suppose by induction that the assertions (a), (b) in the statement hold for $n\geq 2$ and we prove them for $n+1$.

(a) Let $w\in\partial\mathbb{B}$. Then, for every $q\in\mathbb{B}$, it is clear that $$\tau_{n+1}(\mathcal{F}_0(w,q))=\Delta\circ V^{n}(\mathcal{F}_0(w,q))=0.$$
We observe that
\begin{equation}\label{n+1}
\tau_{n+1}=\Delta \circ V^{n}=\Delta\circ V^{n-1}\circ V=\tau_n\circ V.
\end{equation}
Then, for all $ 1 \leq j <n$ making use of Lemma \ref{VFj} we have
\[ \begin{split}
 \displaystyle \tau_{n+1}(\mathcal{F}_j(w,q)) &=\tau_n\circ V(\mathcal{F}_j(w,q))   \\
 &=-\tau_n(\mathcal{F}_{j-1}(w,q)) \\
 &=-\tau_n(\mathcal{F}_h(w,q)); \textbf{ } 0 \leq h=j-1< n-1. \\
\end{split}
\]
Therefore, by induction hypothesis we conclude that $$\displaystyle \tau_{n+1}(\mathcal{F}_j(w,q))=0, \forall 0 \leq j < n.$$
This shows that (a) holds.

(b) We use a second time the observation \eqref{n+1} combined with Lemma \ref{VFj} and get by induction hypthesis
\[ \begin{split}
 \displaystyle \tau_{n+1}(\mathcal{F}_n(w,q)) &=\tau_n\circ V(\mathcal{F}_n(w,q))   \\
 &=-\tau_n(\mathcal{F}_{n-1}(w,q)) \\
 &=(-1)^n\Delta S^{-1}(w,q). \\
\end{split}
\]
Hence, (b) also holds. This ends the proof.
\end{proof}

\begin{thm}\label{PFI}
Let $f$ be a slice polyanalytic function of order $n\geq 1$ on some axially symmetric slice domain $\Omega$ that contains the closure of $\mathbb B$. Then,  the Fueter regular function $\tau_n(f)$ given by $$\tau_n(f)(q)=\Delta\circ V^{n-1}(f)(q)$$
has the integral representation
$$\displaystyle \tau_n(f)(q)=c(n,\pi)\int _{\partial \mathbb{B}_I} \Delta S^{-1}(w,q)dw_I\overline{\partial}_{I}^{n-1}(f)(w), \forall q\in \mathbb{B}$$
where $I\in\mathbb{S}$ and $\displaystyle c(n,\pi)=\frac{2^{n-1}}{2\pi}.$
\end{thm}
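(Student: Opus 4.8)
The plan is to apply the poly-Fueter map $\tau_n=\Delta\circ V^{n-1}$ directly to the slice poly-Cauchy formula of Theorem \ref{PCF} and then exploit the action of $\tau_n$ on the kernels $\mathcal{F}_j(w,q)$ computed in the preceding lemma. Rewriting the reproducing formula of Theorem \ref{PCF} in terms of the functions $\mathcal{F}_j(w,q)=S^{-1}(w,q)\,Re^j(w-q)/j!$, we have for every $q\in\mathbb{B}$
$$f(q)=\frac{1}{2\pi}\int_{\partial\mathbb{B}_I}\sum_{j=0}^{n-1}(-2)^j\mathcal{F}_j(w,q)\,dw_I\,\overline{\partial_I}^j(f)(w).$$

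First I would fix $q\in\mathbb{B}$ and observe that, since $w$ ranges over $\partial\mathbb{B}$, one has $q\notin[w]$, so each kernel $\mathcal{F}_j(w,q)$ is smooth in $q$ near the fixed point, with all derivatives bounded uniformly in $w\in\partial\mathbb{B}_I$ (a compact set). This is exactly what is needed to differentiate under the integral sign and hence to commute the operator $\tau_n=\Delta\circ V^{n-1}$, which acts only on the variable $q$, with the integration over $\partial\mathbb{B}_I$. Applying $\tau_n$ to both sides thus yields
$$\tau_n(f)(q)=\frac{1}{2\pi}\int_{\partial\mathbb{B}_I}\sum_{j=0}^{n-1}(-2)^j\tau_n(\mathcal{F}_j(w,q))\,dw_I\,\overline{\partial_I}^j(f)(w).$$

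Next I would invoke the previous lemma, which gives $\tau_n(\mathcal{F}_j(w,q))=0$ for every $0\leq j<n-1$ while $\tau_n(\mathcal{F}_{n-1}(w,q))=(-1)^{n-1}\Delta S^{-1}(w,q)$. Hence the sum collapses to the single term $j=n-1$ and we are left with
$$\tau_n(f)(q)=\frac{1}{2\pi}\int_{\partial\mathbb{B}_I}(-2)^{n-1}(-1)^{n-1}\Delta S^{-1}(w,q)\,dw_I\,\overline{\partial_I}^{n-1}(f)(w).$$
Since $(-2)^{n-1}(-1)^{n-1}=(-1)^{2(n-1)}2^{n-1}=2^{n-1}$, the constant becomes $2^{n-1}/(2\pi)=c(n,\pi)$, which is precisely the claimed integral representation. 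The independence of the right-hand side of the choice of $I\in\mathbb{S}$ is inherited from the analogous independence already established for the slice poly-Cauchy formula in Theorem \ref{PCF}.

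I expect the only genuine difficulty to be the justification of differentiation under the integral sign, i.e. the commutation of $\Delta\circ V^{n-1}$ with $\int_{\partial\mathbb{B}_I}$; everything else is a bookkeeping of signs. The delicate point in that justification is that $V$ carries the nonconstant coefficient $\vec{q}\,/|\vec{q}\,|^2$, which is singular on the real axis, so strictly speaking one first differentiates on $\mathbb{B}\setminus\mathbb{R}$ and then uses the continuous-extension convention of Remark \ref{GVR} (together with Theorem 2.4 in \cite{P2019}) to extend the resulting identity across the real points of $\mathbb{B}$, exactly as was done in Lemma \ref{VFj}.
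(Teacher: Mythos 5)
Your proposal is correct and follows essentially the same route as the paper: apply $\tau_n=\Delta\circ V^{n-1}$ to the slice poly-Cauchy formula of Theorem \ref{PCF}, collapse the sum using $\tau_n(\mathcal{F}_j(w,q))=0$ for $j<n-1$ and $\tau_n(\mathcal{F}_{n-1}(w,q))=(-1)^{n-1}\Delta S^{-1}(w,q)$, and track the constant $(-2)^{n-1}(-1)^{n-1}=2^{n-1}$. Your explicit attention to differentiating under the integral sign and to first working on $\mathbb{B}\setminus\mathbb{R}$ before extending across the real axis is in fact slightly more careful than the paper's own write-up, which handles the extension by observing that the right-hand side is Fueter regular on all of $\mathbb{B}$.
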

\begin{proof}
Let $f\in\mathcal{SP}_n(\Omega)$, we know by the poly-Cauchy formula for slice polyanalytic functions (Theorem \ref{PCF}) that for all $q\in\mathbb{B}$ we have
$$\displaystyle f(q)=\frac{1}{2\pi}\int_{\partial \mathbb{B}_I}\sum_{j=0}^{n-1}(-2)^j\mathcal{F}_j(w,q)dw_I\overline{\partial}^j(f)(w).$$
Therefore, we apply the Fueter mapping $\tau_n=\Delta\circ V^{n-1}$ and obtain that
$$\displaystyle \tau_n(f)(q)=\frac{1}{2\pi}\int_{\partial \mathbb{B}_I}\sum_{j=0}^{n-1}(-2)^j\tau_n(\mathcal{F}_j(w,q))dw_I\overline{\partial}^j(f)(w), \textbf{  } \forall q\in \mathbb{B}\setminus \R.$$
However, by Lemma \ref{VFj} we know that
$$\tau_n(\mathcal{F}_{n-1}(w,q))=(-1)^{n-1}\Delta S^{-1}(w,q) \text{ and } \tau_n(\mathcal{F}_j(w,q))=0, \forall 0 \leq j < n-1$$

Hence, we obtain $$\displaystyle \tau_n(f)(q)=\frac{2^{n-1}}{2\pi}\int _{\partial \mathbb{B}_I} \Delta S^{-1}(w,q)dw_I\overline{\partial}_{I}^{n-1}(f)(w), \forall q\in \mathbb{B}\setminus \R.$$
Finally, it is clear that the integral in the right hand side is Fueter regular with respect to $q$ everywhere on $\mathbb{B}$ which allows to extend $\tau_n(f)$ to a Fueter regular function on the unit ball. This completes the proof.
\end{proof}

\begin{cor}
Under the same hypothesis of Theorem \ref{PFI} we note that the poly-Fueter mapping has the explicit integral expression
$$\displaystyle \tau_n(f)(q)=\frac{2^n}{\pi}\int _{\partial \mathbb{B}_I}(\overline{q}-w)(w^2-2Re(q)w+|q|^2)^{-2}dw_I\overline{\partial}_{I}^{n-1}(f)(w), \forall q\in \mathbb{B}.$$
\end{cor}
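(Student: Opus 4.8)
The plan is to obtain the stated expression directly from Theorem \ref{PFI} by substituting the explicit form of the Fueter-regular Cauchy kernel $\Delta S^{-1}(w,q)$ furnished by Theorem \ref{CFK}. No new analytic input is required: the corollary is essentially a bookkeeping exercise making the abstract kernel $\Delta S^{-1}(w,q)$ explicit and collecting the resulting numerical constant.

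Concretely, I would start from the integral representation already established in Theorem \ref{PFI}, namely
$$\tau_n(f)(q)=\frac{2^{n-1}}{2\pi}\int_{\partial\mathbb{B}_I}\Delta S^{-1}(w,q)\,dw_I\,\overline{\partial}_{I}^{n-1}(f)(w),\qquad \forall q\in\mathbb{B}.$$
Next I would invoke Theorem \ref{CFK} with the variable $s$ replaced by the boundary variable $w$, which gives the closed form
$$\Delta S^{-1}(w,q)=-4(w-\overline{q})\bigl(w^2-2Re(q)w+|q|^2\bigr)^{-2}.$$
The only point deserving a word of care is the sign: rewriting $-4(w-\overline{q})=4(\overline{q}-w)$ brings the kernel into the shape appearing in the statement, so that
$$\Delta S^{-1}(w,q)=4(\overline{q}-w)\bigl(w^2-2Re(q)w+|q|^2\bigr)^{-2}.$$

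Substituting this into the integral and pulling the scalar factor $4$ outside, the prefactor becomes $\tfrac{2^{n-1}}{2\pi}\cdot 4=\tfrac{2^{n+1}}{2\pi}=\tfrac{2^n}{\pi}$, which is exactly the constant in the claimed formula. This yields
$$\tau_n(f)(q)=\frac{2^n}{\pi}\int_{\partial\mathbb{B}_I}(\overline{q}-w)\bigl(w^2-2Re(q)w+|q|^2\bigr)^{-2}\,dw_I\,\overline{\partial}_{I}^{n-1}(f)(w),\qquad\forall q\in\mathbb{B}.$$
Since the identity is a pure substitution, there is no genuine obstacle; the only thing one must not get wrong is the elementary constant arithmetic together with the sign flip from $-4(w-\overline{q})$ to $4(\overline{q}-w)$, and the fact that, as noted in Theorem \ref{PFI}, the right-hand side is Fueter regular in $q$ throughout $\mathbb{B}$ so the representation is valid on the whole ball and not merely off the real axis.
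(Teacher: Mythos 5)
Your proposal is correct and is essentially identical to the paper's own proof: both substitute the explicit kernel $\Delta S^{-1}(w,q)=-4(w-\overline{q})(w^2-2Re(q)w+|q|^2)^{-2}$ from Theorem \ref{CFK} into the integral representation of Theorem \ref{PFI} and absorb the factor $4$ (with the sign flip to $\overline{q}-w$) into the constant, giving $\frac{2^{n-1}}{2\pi}\cdot 4=\frac{2^n}{\pi}$. Nothing is missing.
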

\begin{proof}
We apply Theorem \ref{PFI} and use the expression  $$\Delta S^{-1}(w,q)=-4(w-\overline{q})(w^2-2Re(q)w+|q|^2)^{-2},$$
that was proved in \cite{ColomboSabadiniSommen2010}.
\end{proof}
\begin{rem}

\begin{enumerate}
\item Thanks to Theorem \ref{TauC} the integral formulation of the poly-Fueter mapping theorem can be expressed in terms of the map $\mathcal{C}_n$ as
$$\displaystyle \mathcal{D}^{n-1}[\mathcal{C}_n(f)](q)=\frac{1}{2\pi}\int _{\partial \mathbb{B}_I} \Delta S^{-1}(w,q)dw_I\overline{\partial}_{I}^{n-1}(f)(w), \forall q\in \mathbb{B}.$$
\item The case $n=1$ in Theorem \ref{PFI} is the Fueter mapping theorem in integral form proved in \cite{ColomboSabadiniSommen2010}.
\end{enumerate}

\end{rem}

\section{The slice polymonogenic case}
In this section, we see how the results of quaternionic slice polyanalytic functions can be reformulated in the slice monogenic setting. We omit to write the proofs since they are similar to the quaternionic case. We recall first some basic notations, let $\lbrace{e_1,e_2, . . . , e_n}\rbrace$ be an orthonormal basis of the Euclidean vector space $\R^n$ satisfying the rule
$$e_ke_s + e_se_k = -2\delta_{k,s}, k, s= 1, . . . , n$$
where $\delta_{k,s}$ is the Kronecker symbol. The set $$\lbrace{e_A : A \subset\lbrace{1, . . . , n}\rbrace \text{ with }
e_A = e_{h_1}e_{h_2}...e_{h_r}, 1 \leq h_1 < ...< h_r \leq n, e_{\emptyset} = 1}\rbrace $$
forms a basis of the $2^n$-dimensional Clifford algebra $\R_n$ over $\R$. Let $\R^{n+1}$ be embedded in $\R_n$ by identifying
$(x_0, x_1,..., x_n) \in \R^{n+1}$ with the paravector $x=x_0+\underline{x}\in \R_n$. The conjugate of $x$ is given by $\bar{x} = x_0-\underline{x}$
and the norm $\vert{x}\vert$ of $x$ is defined by $\vert{x}\vert^2=x_0^2+...+x_n^2$. We denote also by $\mathbb{S}^{n-1}$ the $(n-1)$-dimensional sphere of unit vectors in $\R^n$ given by $$\mathbb{S}^{n-1}=\lbrace{\omega=x_1e_1+...+x_ne_n\textbf{}: x_1^2+...+x_n^2=1}\rbrace, \textbf{  } \omega^2=-1.$$
The Euclidean Dirac operator on $\R^n$ is given by $$\displaystyle D_{\underline{x}}=\sum_{j=1}^ne_j\partial_{x_j}.$$  The generalized Cauchy-Riemann operator (also known as Weyl operator) and its conjugate in $\R^{n+1}$ are given respectively by $$\displaystyle D:=\partial_{x_0}+D_{\underline{x}} \text{ and }  \displaystyle\overline{D}:=\partial_{x_0}-D_{\underline{x}}.$$  Real differentiable functions on some open subset of $\R^{n+1}$ taking their values in $\R_n$ that are in the kernel of $D^k$ are called left $k$-monogenic or polymonogenic of order $k$, see \cite{DB1978}. We consider also the slice monogenic version given by
\begin{defn}
Let $U$ be an axially symmetric open set in $\R^{n+1}$ and $f:U\longrightarrow \R_n$ be a slice function of class  $\mathcal{C}^k$. We say that $f$ is slice polymonogenic of order $k$ or s-polymonogenic for short, if for any $I\in\Sq^{n-1}$, we have
$$\overline{\partial_I}^kf_I(x+Iy)=0.$$
The set of slice polymonogenic functions of order $k$ is denoted $\mathcal{SM}_k(U)$.
\end{defn}
\begin{rem}
\begin{enumerate}
\item The set $\mathcal{SM}_k(U)$ forms a right module on $\R_n$.
\item The case $k=1$ corresponds to the slice monogenic functions considered in \cite{CSS}.
\end{enumerate}
\end{rem}
We consider on $\Omega \subset \R^{n+1}$ the global operator on high dimensions defined by $$\displaystyle V_n(f)(x):=\partial_{x_0}f(x)+\frac{\vec{x}}{|\vec{x}|^2}\sum_{l=1}^nx_l \partial_{x_l}f(x), \forall x\in\Omega\setminus \R.$$
\begin{lem}[Splitting Lemma]
Let $U$ be an axially symmetric open set in $\R^{n+1}$ and $f:U\longrightarrow \R_n$ be a slice polymonogenic function of order $k$. For every $I=I_1\in\Sq$ let $I_2,...,I_n$ be a completion to an orthonormal basis of $\R_n$. Then, there exists $2^{n-1}$ polyanayltic functions of order $k$ denoted $F_A:U_I\longrightarrow \C_I$ such that for every $z=x+Iy$
$$f_I(z)=\displaystyle \sum_{|A|=0}^{n-1}F_A(z)I_A, \textbf{  } I_A=I_{i_1}...I_{i_l},$$
where $A=\lbrace i_1,...,i_l \rbrace$ is a subset of $\lbrace 2,...,n \rbrace$, with $i_1<...<i_l$.
\end{lem}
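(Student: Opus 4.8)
The plan is to follow, mutatis mutandis, the proof of the quaternionic Splitting Lemma (Proposition~\ref{split}) given in \cite{ADS2019}; the only genuinely new ingredient is the linear-algebra bookkeeping forced by the larger Clifford algebra $\R_n$. First I would fix $I=I_1\in\Sq^{n-1}$ and complete it to an orthonormal basis $I_1,\dots,I_n$ of $\R^n$. Since the standard Euclidean inner product is preserved, these vectors still satisfy the defining relations $I_rI_s+I_sI_r=-2\delta_{r,s}$, so that the ordered products $I_B=I_{b_1}\cdots I_{b_r}$ (with $b_1<\cdots<b_r$) form an $\R$-basis of $\R_n$ exactly as the $e_B$ do.

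The first real step is to decompose $\R_n$ as a module over $\C_I=\R+\R I_1$. Splitting each index set $B\subseteq\{1,\dots,n\}$ according to whether $1\in B$, one gets either $B=A$ or $B=\{1\}\cup A$ with $A\subseteq\{2,\dots,n\}$, and in the second case $I_B=I_1I_A$ because $1$ is the smallest index. Hence $\{I_A,\,I_1I_A:A\subseteq\{2,\dots,n\}\}$ is an $\R$-basis of $\R_n$, and since $\C_I=\R\cdot 1\oplus\R\cdot I_1$ this says precisely that the $2^{n-1}$ elements $\{I_A\}_{A\subseteq\{2,\dots,n\}}$ form a basis of $\R_n$ regarded as a free left $\C_I$-module.

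Consequently, for $z=x+Iy\in U_I$ one may write $f_I(z)=\sum_{|A|=0}^{n-1}F_A(z)I_A$ with uniquely determined coefficients $F_A:U_I\to\C_I$, each as regular ($\mathcal{C}^k$) as $f$ because the projections onto the $\C_I$-components are continuous and $\R$-linear. It then remains to show that every $F_A$ is polyanalytic of order $k$. The key point is that $\overline{\partial_I}=\frac{1}{2}(\partial_x+I_1\partial_y)$ acts diagonally: since $I_1$ commutes with the $\C_I$-valued $F_A$ and the constants $I_A$ sit on the right, one has $\overline{\partial_I}(F_AI_A)=(\overline{\partial_I}F_A)I_A$, whence $\overline{\partial_I}^kf_I=\sum_A(\overline{\partial_I}^kF_A)I_A$. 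Slice polymonogenicity of order $k$ makes the left-hand side vanish, and the left $\C_I$-linear independence of the $I_A$ established above forces $\overline{\partial_I}^kF_A=0$ for every $A$, as desired. I expect the only delicate point to be precisely this placement of sides: one must keep the scalar coefficients on the left and the algebra generators on the right, so that $I_1$ commutes past the coefficients and the $I_A$ behave as constants under $\overline{\partial_I}$. Once the module structure is arranged correctly, the analytic conclusion is immediate from the one-variable complex polyanalytic theory.
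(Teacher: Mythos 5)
Your argument is correct and is exactly the adaptation of the quaternionic Splitting Lemma that the authors have in mind: the paper itself omits the proof in Section 5, stating only that it is "similar to the quaternionic case" from \cite{ADS2019}, and your two steps (the $I_A$, $A\subseteq\{2,\dots,n\}$, form a basis of $\R_n$ as a free left $\C_I$-module, and $\overline{\partial_I}^k$ acts componentwise because $I$ commutes with the $\C_I$-valued coefficients while the $I_A$ are right constants) are precisely the standard route. No gaps.
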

\begin{thm}[s-polymonogenic decomposition]\label{spd}
Let $\Omega$ be an axially symmetric slice domain of $\R^{n+1}$ and $f:\Omega\longrightarrow \R_n$. Then, $f\in\mathcal{SM}_k(\Omega)$ if and only if there exists unique $ f_0,...,f_{k-1}\in\mathcal{SM}(\Omega)$ such that $$f(x)=f_0(x)+\overline{x}f_1(x)+...+\overline{x}^{k-1}f_{k-1}(x), \textbf{  } \forall x\in \Omega.$$
\end{thm}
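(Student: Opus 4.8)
The plan is to mirror the quaternionic poly-decomposition (Proposition \ref{carac3}), reducing everything to the classical complex polyanalytic decomposition on a single slice and then globalizing by the extension lemma for slice functions on an axially symmetric slice domain. The argument proceeds by induction on the order $k$, the case $k=1$ being exactly the definition of $\mathcal{SM}(\Omega)$.

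For \emph{sufficiency} and \emph{uniqueness} I would argue slicewise. Fix $I\in\mathbb{S}^{n-1}$ and put $z=x+Iy$ on $\C_I$, so that $\overline{x}$ restricts to $\overline{z}$ there. If $f=\sum_{j=0}^{k-1}\overline{x}^jf_j$ with $f_j\in\mathcal{SM}(\Omega)$, then on $\Omega_I$ each $(f_j)_I$ is a null solution of $\overline{\partial_I}$, hence $\overline{z}^j(f_j)_I$ is polyanalytic of order $j+1\le k$; summing gives $\overline{\partial_I}^k f_I=0$, and since $I$ is arbitrary, $f\in\mathcal{SM}_k(\Omega)$. For uniqueness, if $\sum_{j}\overline{x}^jf_j\equiv 0$, restricting to $\C_I$ and invoking the uniqueness of the classical decomposition of polyanalytic functions \cite{Balk1991} forces $(f_j)_I=0$ for every $j$; as this holds on each slice, each $f_j$ vanishes.

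For \emph{necessity} (existence), fix $I$ and complete it to an orthonormal basis so that the Splitting Lemma writes $f_I=\sum_{A}F_A I_A$ with each $F_A:\Omega_I\to\C_I$ complex polyanalytic of order $k$. Applying the classical polyanalytic decomposition componentwise yields $f_I(z)=\sum_{j=0}^{k-1}\overline{z}^j g_j(z)$ with $g_j$ holomorphic on $\Omega_I$; concretely the top coefficient is $g_{k-1}=\frac{1}{(k-1)!}\overline{\partial_I}^{k-1}f_I$. The key step is then to produce the global coefficient: since $g_{k-1}$ is a null solution of $\overline{\partial_I}$ on $\Omega_I$ and $\Omega$ is an axially symmetric slice domain, the extension lemma for slice functions (the slice monogenic analogue of the one in \cite{ADS2019}) provides a unique $f_{k-1}\in\mathcal{SM}(\Omega)$ restricting to $g_{k-1}$. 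Subtracting, $f-\overline{x}^{k-1}f_{k-1}$ is a slice function whose restriction to $\C_I$ is polyanalytic of order $k-1$; hence it lies in $\mathcal{SM}_{k-1}(\Omega)$, and the induction hypothesis completes the decomposition.

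The hard part will be this globalization step: one must check that the slicewise construction of $g_{k-1}$ is genuinely independent of the chosen imaginary unit $I$, i.e. that $\frac{1}{(k-1)!}\overline{\partial_I}^{k-1}f_I$ is the restriction of one and the same slice function for all $I\in\mathbb{S}^{n-1}$. This is where axial symmetry enters: because $f$ is a slice function, its restrictions on different slices are tied together by the representation formula, and one has to verify that the slice-dependent operators $\overline{\partial_I}^{k-1}$ respect this coherence so that the extension lemma applies unambiguously. Once this compatibility is secured, together with the routine check that $f-\overline{x}^{k-1}f_{k-1}$ drops exactly one order, the induction closes and, combined with the uniqueness above, establishes the stated equivalence.
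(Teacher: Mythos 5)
The paper gives no proof of this theorem: it appears in Section 5 under the blanket remark that the proofs there are ``similar to the quaternionic case'', i.e.\ to Proposition \ref{carac3}, which is itself quoted from \cite{ADS2019}. Your slicewise argument (Splitting Lemma, classical polyanalytic decomposition on $\Omega_I$, extension of the coefficients, induction on $k$) is exactly that adaptation, and your sufficiency and uniqueness parts are sound. The one point you leave open --- whether $\frac{1}{(k-1)!}\overline{\partial_I}^{k-1}f_I$ is coherent across different choices of $I$ --- does not need to be verified directly: fix a single $I$, decompose $f_I(z)=\sum_{j=0}^{k-1}\overline{z}^jg_j(z)$ there, extend each $g_j$ to $f_j\in\mathcal{SM}(\Omega)$ by the extension lemma, note that $\sum_{j}\overline{x}^jf_j$ is slice polymonogenic of order $k$ by your own sufficiency computation and coincides with $f$ on all of $\Omega_I$, and conclude $f=\sum_j\overline{x}^jf_j$ on $\Omega$ by the identity principle for slice polyanalytic functions on slice domains (the same principle the paper invokes in Proposition \ref{PCK} and Theorem \ref{PCF}). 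With that substitution in place of the coherence check, your induction closes and the proof is complete.
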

Using similar calculations to the quaternions case, we can prove that
\begin{thm}
Let $\Omega$ be an axially symmetric slice domain of $\R^{n+1}$ and $f:\Omega\longrightarrow \R_n$ an s-polymonogenic function of order $k\geq 1$. Then, $f$ belongs to $\ker(V^k_n)$, i.e:
$$\displaystyle V^k_n(f)(x)=0, \textit{ }\forall x\in\Omega.$$
\end{thm}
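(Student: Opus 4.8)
The plan is to transcribe, step by step, the chain of results that led to Theorem \ref{Vn} in the quaternionic setting, since the whole argument rests only on algebraic identities that survive the passage from $\Hq$ to the Clifford algebra $\R_n$. The two engines of the quaternionic proof are the decomposition $f=\sum_{j=0}^{k-1}\overline{q}^jf_j$ with slice regular components and the explicit action $V(\overline{q}^jf)=2j\,\overline{q}^{j-1}f$ on each summand; in the paravector setting these are replaced by the s-polymonogenic decomposition of Theorem \ref{spd} and by a paravector analogue of Proposition \ref{GVAct}, respectively.

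First I would introduce the auxiliary operator $G_n(f)(x):=|\underline{x}|^2\partial_{x_0}f(x)+\underline{x}\sum_{l=1}^nx_l\partial_{x_l}f(x)$, so that $V_n=\tfrac{1}{|\underline{x}|^2}G_n$ off the real axis, exactly as in Remark \ref{GVR}. The key step is the paravector version of Lemma \ref{Gqbar}, namely $G_n(\overline{x}\psi)=\overline{x}\,G_n(\psi)+2|\underline{x}|^2\psi$ for every $\mathcal{C}^1$ function $\psi$. Its proof is the same Leibniz computation as in the quaternionic case and uses only the three identities $\underline{x}=\sum_{l=1}^nx_le_l$, $\underline{x}^2=-|\underline{x}|^2$ and $\underline{x}\,\overline{x}=\overline{x}\,\underline{x}$. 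All three are immediate consequences of the defining relations $e_ke_s+e_se_k=-2\delta_{k,s}$: indeed $\underline{x}^2=\sum_{k,s}x_kx_se_ke_s=-\sum_kx_k^2=-|\underline{x}|^2$, and since $\overline{x}=x_0-\underline{x}$ both products $\underline{x}\,\overline{x}$ and $\overline{x}\,\underline{x}$ equal $x_0\underline{x}+|\underline{x}|^2$. From this lemma, an induction identical to the one proving Proposition \ref{GVAct} yields, for every slice monogenic $f$ and every $j\geq 1$, the formula $V_n(\overline{x}^jf)=2j\,\overline{x}^{j-1}f$.

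With these tools the proof assembles exactly as for Proposition \ref{VO} and Theorem \ref{Vn}. Writing $f=\sum_{j=0}^{k-1}\overline{x}^jf_j$ with $f_j\in\mathcal{SM}(\Omega)$ via Theorem \ref{spd}, and applying the action formula termwise, I obtain $V_n(f)=\sum_{j=1}^{k-1}2j\,\overline{x}^{j-1}f_j=\sum_{h=0}^{k-2}\overline{x}^h\zeta_h$ with $\zeta_h:=2(h+1)f_{h+1}\in\mathcal{SM}(\Omega)$; hence $V_n(f)$ is s-polymonogenic of order $k-1$ (the paravector counterpart of Proposition \ref{VO}). Iterating this reduction $k-1$ times gives $V^{k-1}_n(f)\in\mathcal{SM}(\Omega)$, i.e. a slice monogenic function, which therefore lies in $\ker(V_n)$ away from $\R$; applying $V_n$ once more gives $V^k_n(f)=0$ there.

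The only point that is not a purely formal transcription is the final passage across the real axis. Just as the quaternionic argument invoked Theorem 2.4 in \cite{P2019} to pass from $V(g)=0$ on $\Omega\setminus\R$ to $V(g)=0$ on all of $\Omega$ for slice regular $g$, here I would need the paravector analogue: that a slice monogenic function belongs to $\ker(V_n)$ and that each $V^j_n(f)$ admits a continuous extension across $\R$, so that the identity $V^k_n(f)=0$, first established on $\Omega\setminus\R$, propagates to the whole domain. Establishing (or citing) this continuous-extension statement in the $\R^{n+1}$ setting is the main obstacle; once it is in place, the remaining computations are word-for-word the quaternionic ones.
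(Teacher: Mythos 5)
Your proposal is correct and follows exactly the route the paper intends: the paper omits this proof, stating only that it proceeds ``using similar calculations to the quaternions case,'' and your transcription supplies precisely those calculations, with the Clifford identities $\underline{x}^2=-|\underline{x}|^2$ and $\underline{x}\,\overline{x}=\overline{x}\,\underline{x}$ correctly verified from the relations $e_ke_s+e_se_k=-2\delta_{k,s}$. The one point you flag as an obstacle --- the extension of $V_n^k(f)=0$ across the real axis --- is already covered by the cited literature, since \cite{CGCS2013} establishes that slice monogenic functions lie in $\ker(G_n)$ and the results of \cite{P2019} are formulated in a generality that includes the Clifford-algebra setting, so no new argument is needed there.
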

For slice polymonogenic functions we state the poly-Sce-Fueter mapping theorems in the Clifford setting as follows
\begin{thm}[Poly-Fueter-Sce mapping theorem I]
Let $n$ be an odd number and $\Omega$ an axially symmetric slice domain of $\R^{n+1}$. If $f$ is an s-polymonogenic function of order $k$. Then, the poly-Fueter mapping defined by $$\tau_{n,k}(f)(x)=\displaystyle \Delta_{\R^{n+1}}^{\frac{n-1}{2}}V_n^{k-1}f(x)$$ is a polymonogenic function of order $k$.
\end{thm}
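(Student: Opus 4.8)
The plan is to reproduce, in the Clifford setting, the argument that proved the quaternionic Poly-Fueter mapping theorem I, replacing the single Laplacian by the iterated Sce Laplacian $\Delta_{\R^{n+1}}^{\frac{n-1}{2}}$ and the Fueter map by the Sce--Fueter map. First I would apply the s-polymonogenic decomposition (Theorem \ref{spd}) to write $f(x)=\sum_{j=0}^{k-1}\overline{x}^j f_j(x)$ on $\Omega$, with uniquely determined slice monogenic components $f_0,\dots,f_{k-1}\in\mathcal{SM}(\Omega)$.

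Next I would transfer to $\R^{n+1}$ the action formulas already proved over $\Hq$. Arguing exactly as in Proposition \ref{GVAct}, with $V_n$ in place of $V$ and $\overline{x}$ in place of $\overline{q}$, one gets $V_n(\overline{x}^j g)=2j\,\overline{x}^{j-1}g$ for every slice monogenic $g$ and every $j\geq 1$; in particular $V_n$ lowers the order of polymonogenicity by one, as in Proposition \ref{VO}. Iterating this identity $k-1$ times along the decomposition of $f$ annihilates every term with $j<k-1$ and collapses the top term, giving $$V_n^{k-1}(f)(x)=2^{k-1}(k-1)!\,f_{k-1}(x),$$ a genuine slice monogenic function on $\Omega$. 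This is exactly the content of the Clifford counterpart of Theorem \ref{Vn} recalled just above, which ensures $V_n^{k-1}(f)\in\mathcal{SM}(\Omega)$.

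Finally, since $n$ is odd and $V_n^{k-1}(f)$ is slice monogenic on the axially symmetric slice domain $\Omega$, the classical Sce--Fueter theorem guarantees that $\Delta_{\R^{n+1}}^{\frac{n-1}{2}}$ sends it into the kernel of the Weyl operator $D$. Hence $\tau_{n,k}(f)=\Delta_{\R^{n+1}}^{\frac{n-1}{2}}V_n^{k-1}(f)$ is (left) monogenic; since $\ker(D)\subset\ker(D^k)$, it is in particular polymonogenic of order $k$, as claimed.

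The step I expect to be the main obstacle is analytic rather than algebraic. The operator $V_n$ carries the factor $\vec{x}/|\vec{x}|^2$ and is therefore \emph{a priori} defined only on $\Omega\setminus\R$, so each of the $k-1$ applications of $V_n$ must be accompanied by a verification that the result extends continuously, and indeed slice monogenically, across the real hyperplane. In the quaternionic proofs this was handled by repeatedly invoking Theorem 2.4 of \cite{P2019}; here one needs its $\R^{n+1}$ analog in order to legitimately identify $V_n^{k-1}(f)$ with the slice monogenic function $2^{k-1}(k-1)!\,f_{k-1}$ on all of $\Omega$ before applying the Sce--Fueter map.
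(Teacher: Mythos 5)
Your proposal is correct and follows exactly the route the paper intends: the authors omit the proof, stating only that it is "similar to the quaternionic case," and your argument is precisely that quaternionic proof (poly-decomposition, the $V_n(\overline{x}^j g)=2j\,\overline{x}^{j-1}g$ action formula, iteration to reach a slice monogenic function, then the Sce--Fueter theorem) transplanted to $\R^{n+1}$, including the correct handling of the extension across the real axis and of the fact that monogenicity implies polymonogenicity of order $k$.
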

\begin{thm}[Poly-Fueter-Sce mapping theorem II]
Let $\Omega$ be an axially symmetric slice domain of $\R^{n+1}$ and $f:\Omega\longrightarrow \R_{n}$ a slice polyanalytic function of order $k\geq 1$. Assume that $f$ admits a poly-decomposition given by

$$\displaystyle f(x)=\sum_{j=0}^{k-1}\overline{x}^jf_j(x), \forall x\in\Omega$$ where $f_0,...,f_{n-1}\in \mathcal{SM}(\Omega).$
Then, the function defined by
\begin{equation}
\displaystyle \mathcal{C}_{n,k}(f)(q)=\sum_{j=0}^{k-1}x_0^j\Delta_{\R^{n+1}}^{\frac{n-1}{2}}(f_j)(x), \forall x\in \Omega
\end{equation}

is a poly-monogenic function of order $k$.
\end{thm}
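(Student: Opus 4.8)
The plan is to mirror the proof of the quaternionic Poly-Fueter mapping theorem II, replacing the Fueter map $\Delta$ by the Fueter--Sce map $\Delta_{\R^{n+1}}^{\frac{n-1}{2}}$ and Fueter regularity by monogenicity. The starting observation is that $\mathcal{C}_{n,k}(f)$ is by construction of the form $\sum_{j=0}^{k-1}x_0^j\phi_j$, where $\phi_j:=\Delta_{\R^{n+1}}^{\frac{n-1}{2}}(f_j)$; so it suffices to show that each $\phi_j$ is monogenic and then that a sum of this shape is poly-monogenic of order $k$.

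First I would invoke the Fueter--Sce mapping theorem, valid for $n$ odd (which is precisely the case in which $\frac{n-1}{2}$ is a nonnegative integer, so that the hypotheses are tacitly assumed here as in Poly-Fueter-Sce mapping theorem I). Since each $f_j\in\mathcal{SM}(\Omega)$ is slice monogenic and $\Omega$ is an axially symmetric slice domain, the function $\phi_j=\Delta_{\R^{n+1}}^{\frac{n-1}{2}}(f_j)$ lies in the kernel of the Cauchy--Riemann operator $D$, that is $D\phi_j=0$ on $\Omega$, for every $0\leq j\leq k-1$.

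Second, I would conclude by the Clifford analog of the monogenic decomposition. Writing $D=\partial_{x_0}+D_{\underline{x}}$ and using that $x_0^j$ depends only on $x_0$ while $D\phi_j=0$, the Leibniz rule gives $D(x_0^j\phi_j)=jx_0^{j-1}\phi_j$, exactly as in the quaternionic computation; iterating, $D^m(x_0^j\phi_j)=\frac{j!}{(j-m)!}x_0^{j-m}\phi_j$ for $m\leq j$ and $D^m(x_0^j\phi_j)=0$ for $m>j$. Since every exponent appearing in $\mathcal{C}_{n,k}(f)$ satisfies $j\leq k-1<k$, applying $D$ a total of $k$ times annihilates each summand, whence $D^k\mathcal{C}_{n,k}(f)=0$ and $\mathcal{C}_{n,k}(f)$ is poly-monogenic of order $k$. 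Alternatively one may simply quote the poly-monogenic decomposition in the Clifford setting from \cite{B1976,DB1978}, the exact analog of Proposition~\ref{Poly-Fueter-Dec}.

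The main point to verify carefully is the first step: that the Fueter--Sce map $\Delta_{\R^{n+1}}^{\frac{n-1}{2}}$ genuinely sends slice monogenic functions into $\ker D$. This is the classical Sce construction for odd $n$ and is precisely the content underlying Poly-Fueter-Sce mapping theorem I; once it is granted, the remaining algebra is identical to the quaternionic case and presents no real obstacle.
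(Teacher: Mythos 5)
Your proposal is correct and follows essentially the same route as the paper, which explicitly omits the proof as being ``similar to the quaternionic case'': there, one applies the Fueter(--Sce) map to each slice (mono)genic component $f_j$ to get monogenic functions $\phi_j$, and then invokes the poly-decomposition $\sum_j x_0^j\phi_j$ from \cite{B1976,DB1978} (the analog of Proposition~\ref{Poly-Fueter-Dec}) to conclude poly-monogenicity of order $k$. Your explicit Leibniz-rule verification and your remark that the hypothesis ``$n$ odd'' must be tacitly carried over from Poly-Fueter-Sce mapping theorem I are both sound and, if anything, slightly more careful than the paper's treatment.
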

\noindent{\bf Acknowledgements} \\
The authors would like to thank the referees for the useful comments that improve the presentation of the paper.\\
Daniel Alpay thanks the Foster G. and Mary McGaw Professorship in
Mathematical Sciences, which supported this research. Kamal Diki thanks Chapman university for kind hospitality during the period in which a part of this paper was written. His research is supported by the project INdAM Doctoral Programme in Mathematics and/or Applications Cofunded by Marie Sklodowska-Curie Actions, acronym: INdAM-DP-COFUND-2015, grant number: 713485.

\end{document}